\newtheorem{thm}{Theorem}[section]
\newtheorem{prop}[thm]{Proposition}
\newtheorem{cor}[thm]{Corollary}
\newtheorem{lem}[thm]{Lemma}
\theoremstyle{definition}
\newtheorem{defi}[thm]{Definition}
\newtheorem{remark}[thm]{Remark}
\newtheorem{rem}[thm]{Remark}
\newtheorem{rems}[thm]{Remarks}
\newtheorem{notation}[thm]{Notation}
\theoremstyle{plain}
\newcommand{\lla}{\langle\!\langle}
\newcommand{\rra}{\rangle\!\rangle}
\renewcommand{\phi}{\varphi}
\newcommand{\alt}{\mathop{\mathrm{Alt}}}
\newcommand{\sym}{\mathop{\mathrm{Sym}}}
\newcommand{\nrd}{\mathop{\mathrm{Nrd}}}
\newcommand{\car}{\mathop{\mathrm{char}}}
\newcommand{\id}{\mathop{\mathrm{id}}}
\newcommand{\disc}{\mathop{\mathrm{disc}}}
\author{M. G. Mahmoudi, \ A.-H. Nokhodkar}
\title[Totally decomposable algebras with involution in characteristic $2$]
{
On totally decomposable
algebras with involution
in characteristic two
}
\begin{document}
\maketitle

\begin{abstract}
A necessary and sufficient condition for a central simple algebra with involution over a field of characteristic two to be decomposable as a tensor product of quaternion algebras with involution, in terms of its Frobenius subalgebras, is given.
It is also proved that a bilinear Pfister form, recently introduced by A. Dolphin, can classify totally decomposable central simple algebras of orthogonal type.
\\

\noindent
\emph{Mathematics Subject Classification:} 16W10, 16W25, 16K20, 11E39. \\
\end{abstract}

\section{Introduction}

An old result due to A. A. Albert states that every central simple algebra $A$
of degree $4$ which carries an involution of the first kind can be
decomposed as a tensor product of two quaternion algebras (see \cite[\S16]{knus}).
This result is no longer valid if $A$ is of degree $8$ by the examples
given in \cite{art} over fields of characteristic different from
$2$ and in \cite{rowen1984} over fields of
characteristic $2$.
In \cite{art}, it was also shown that if $A$ is of degree $2^n$ over a field of characteristic different from $2$, then $A$ decomposes into a tensor product of quaternion algebras if and only if there exists a finite square-central subset of $A$ (called a {\it q-generating set}) which satisfies some commuting properties.
Over a field of particular cohomological dimension, it is known that central simple algebras which carry an involution of the first kind can be decomposed as a tensor product of quaternion algebras (see \cite{kahn}, and \cite{barry-chapman} for a characteristic $2$ counterpart).
In \cite{barry2014}, a similar result was proved provided that the base field is of the $u$-invariant $\leqslant8$.

A closely related problem is to determine the conditions under which a
central simple algebra with involution $(A,\sigma)$ is {\it totally decomposable} (i.e., $(A,\sigma)$
decomposes
as a tensor product of $\sigma$-invariant quaternion algebras).
In \cite{rowen78}, it was shown that if $A$ is of degree $4$ over a
field of characteristic different from $2$ and
$\sigma$ is of symplectic type, then $A$ can be decomposed as a tensor
product of two $\sigma$-invariant quaternion algebras.
A proof of this result in characteristic $2$ was given in
\cite{streb}, also a characteristic independent proof of this result
and a criterion for decomposability in the case where $\sigma$ is
orthogonal can be found in \cite{pari}.
A similar criterion for the unitary case of degree $4$ and of arbitrary
characteristic was derived in \cite{kq}.
A cohomological invariant to detect decomposability for degree $8$
algebras with symplectic involution over a field of characteristic
different from $2$ can be found in \cite{gpt}.
For the case of degree $8$ algebras with orthogonal involution $(A,\sigma)$ over a field of
characteristic different from $2$, a criterion for
decomposability in terms of the Clifford algebra of $(A,\sigma)$ can be
found in \cite[(42.11)]{knus}, see also \cite[(3.10)]{tignol2010}.
For the case where $(A,\sigma)$ is split and of arbitrary degree $2^n$
over a field of characteristic different from $2$,
a decomposability criterion in term of higher degree invariants of a
quadratic form $q$, to which $\sigma$ is adjoint, can be found in
\cite{tignol2010}.

Another relevant problem is to find invariants which classify central
simple algebras with involution $(A,\sigma)$ up to conjugation.
Orthogonal involutions of degree $\leqslant4$
can be classified by their Clifford algebras \cite[\S15]{knus}, \cite[\S2]{lewis-tignol}.
A degree $4$ central simple algebra with symplectic involution $(A,\sigma)$ can be classified by a $3$-fold Pfister form or an Albert form associated to $\sigma$, see \cite{klst} and \cite[\S16]{knus}.

In this work we study the problems of decomposition and classification of central simple algebras with involution in the case of characteristic $2$.
In (\ref{der}), we show that a central simple algebra with involution
over a field of characteristic $2$ is totally decomposable if and only if there exists a symmetric and self-centralizing subalgebra $S=\Phi(A,\sigma)$ of $A$ such that $(i)$ $x^2\in F$ for every $x\in S$ and
$(ii)$ $\dim_FS=2^{r_F(S)}$, where $r_F(S)$ is the minimum rank of
$S$.
In the case where $(A,\sigma)$ is totally decomposable central simple algebra with
involution of orthogonal type we show that the aforementioned subalgebra $\Phi(A,\sigma)$, is
unique up to isomorphism (see (\ref{field})).
We prove the existence of a natural associative bilinear form
$\mathfrak{s}$ on $\Phi(A,\sigma)$, isometric to
a recently introduced bilinear Pfister form $\mathfrak{Pf}(A,\sigma)$ in
\cite{dolphin3}, thus providing a more intrinsic definition of
$\mathfrak{Pf}(A,\sigma)$
(see (\ref{lp}), (\ref{gen})).
In \cite[(7.5)]{dolphin3}, it was shown that for every splitting field $K$ of $A$, the involution $\sigma_K$ on $A_K$ is adjoint to the bilinear form $\mathfrak{Pf}(A,\sigma)_K$ (see \cite[(7.5)]{dolphin3}, compare \cite[(5.1)]{queguiner-tignol}), and it was asked (see \cite[(7.4)]{dolphin3}) if $\mathfrak{Pf}(A,\sigma)$ classify $(A,\sigma)$ up to conjugation.
Using the methods developed in the current work, we give in (\ref{phi}) an affirmative answer to this question.

\section{Preliminaries}
Let $V$ be a finite dimensional vector space over a field $F$ and let ${\mathfrak b}:V\times V\rightarrow F$ be a bilinear form.
Let $D_F(\mathfrak{b})=\{\mathfrak{b}(v,v):v\in V ~{\rm and}~\mathfrak{b}(v,v)\neq0\}$.
If $K/F$ is a field extension, the {\it extension} of $\mathfrak {b}$ to $V_K=V\otimes_F K$ is denoted by $\mathfrak {b}_K$.

The {\it orthogonal sum} and the {\it tensor product} of two bilinear forms ${\mathfrak b}_1$ and ${\mathfrak b}_2$ are denoted by
${\mathfrak b}_1\perp {\mathfrak b}_2$ and ${\mathfrak b}_1\otimes {\mathfrak b}_2$ respectively.
For $\alpha$ in $F^\times$, the group of invertible elements of $F$, we use the notation $\langle \alpha\rangle$ for the isometry class of the one-dimensional bilinear space $(V,{\mathfrak b})$ over $F$ defined by ${\mathfrak b}(u,v)=\alpha uv$.
The bilinear form $\perp_{i=1}^n\!\!\langle\alpha_i\rangle$ is denoted by $\langle \alpha_1,\cdots,\alpha_n\rangle$.

Let $F$ be a field and let $\alpha_1,\cdots,\alpha_n\in F^\times$.
The $2^n$-dimensional bilinear form $\langle 1,\alpha_1\rangle\otimes\cdots\otimes\langle 1,\alpha_n\rangle$
over $F$ is called a {\it bilinear $n$-fold Pfister form} and is denoted by $\lla\alpha_1,\cdots,\alpha_n\rra$.
If ${\mathfrak b}$ is a bilinear Pfister form then there exists a bilinear form ${\mathfrak b}'$, uniquely determined up to isometry, such that ${\mathfrak b}=\langle1\rangle\perp {\mathfrak b}'$ (see \cite[p. 16]{arason}).
The form ${\mathfrak b}'$ is called the {\it pure subform} of ${\mathfrak b}$.

A {\it quadratic form} over $F$ is a map $q:V\rightarrow F$ such that: $(1)$ $q(\alpha v)=\alpha^2q(v)$ for every $\alpha\in F$ and $v\in V$;
$(2)$ the map $\mathfrak{b}_q:V\times V\rightarrow F$ defined by $\mathfrak{b}_q(u,v)=q(u+v)-q(u)-q(v)$ for every $u,v\in V$ is a bilinear form.
We say that $q$ is {\it totally singular} if $\mathfrak{b}_q(u,v)=0$ for every $u,v\in V$.
For $\alpha_1,\cdots,\alpha_n\in F$, the isometry class of the $n$-dimensional totally singular quadratic form $(V,q)$ over $F$ defined by $q(v_1,\dots,v_n)=\alpha_1v_1^2+\cdots+\alpha_nv_n^2$ is denoted by $[\alpha_1]\perp\cdots\perp[\alpha_n]$.
The Clifford algebra of a quadratic form $(V,q)$ is denoted by $C(V)$.
We refer the reader to \cite[Ch. II]{elman} for basic definitions and facts regarding Clifford algebras and quadratic and bilinear forms in arbitrary characteristic.

Let $R$ be a ring.
An additive map $\delta:R\rightarrow R$ is called a {\it derivation}, if
$\delta(ab)=a\delta(b)+\delta(a)b$ for every $a,b\in R$.
For $a\in R$, the map $\delta_a:R\rightarrow R$ defined by $\delta_a(x)=ax-xa$ is a derivation of $R$ which is called the {\it inner derivation} induced by $a$.

All $F$-algebras considered in this work are supposed to be unital and associative.
The reader is referred to \cite[Ch. 12]{pierce} for basic notions concerning central simple algebras. We just recall that the {\it degree} of a central simple algebra $A$ is defined by $\deg_FA=\sqrt{\dim_FA}$.
Also for a subalgebra $B$ of $A$ the centralizer of $B$ in $A$ and the center of $B$ are denoted respectively by $C_A(B)$  and $Z(A)$.

A finite dimensional algebra $A$ over $F$ is called a
{\it Frobenius algebra} if $A$ contains a hyperplane $H$ that contains no nonzero left ideal of $A$;
alternatively $A$ is called a Frobenius $F$-algebra if there exists a nondegenerate bilinear form
$\mathfrak{b}:A\times A\rightarrow F$ which is associative, in the sense that $\mathfrak{b}(x,yz)=\mathfrak{b}(xy,z)$
for every $x,y,z\in A$.
In this work, we use the following fundamental result about the properties of Frobenius subalgebras of central simple algebras:
\begin{thm}{\rm\cite[(2.2.3)]{jacob}}\label{223}
Let $A$ be a central simple algebra over a field $F$ and let $S$ be a commutative Frobenius subalgebra of $A$ such that $\dim_FS=\deg_FA$.
\begin{itemize}
\item[$(i)$]
We have $C_A(S)=S$.
\item[$(ii)$] Every derivation of $S$ into $A$ can
be extended to an inner derivation of $A$.
\end{itemize}
\end{thm}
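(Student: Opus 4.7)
The approach is to prove (i) by a module-theoretic dimension count exploiting the Frobenius structure, and to deduce (ii) via a dual-numbers trick combined with (i).

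For (i), commutativity of $S$ gives $S\subseteq C_A(S)$, so it suffices to show $\dim_F C_A(S)\le \dim_F S$. View $A$ as a module over the commutative $F$-algebra $R:=S\otimes_F S$ via the bimodule action $(s_1\otimes s_2)\cdot a:=s_1 a s_2$. This action is faithful because $R$ embeds in $A\otimes_F A^{\mathrm{op}}\cong\End_F(A)$, the latter isomorphism coming from $A$ being central simple. Moreover, $\dim_F R=n^{2}=\dim_F A$, where $n:=\dim_F S=\deg_F A$. Since $S$ is Frobenius, so is $R=S\otimes_F S$, hence $R$ is self-injective; a faithful finitely generated $R$-module of $F$-dimension equal to $\dim_F R$ must coincide with the regular module, and therefore $A\cong R$ as $R$-modules. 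Under this identification, $C_A(S)$ corresponds to $\mathrm{ann}_R(J)$, where $J:=\ker(m\colon R\to S)$ is the kernel of the multiplication map, and Frobenius duality identifies $\mathrm{ann}_R(J)\cong\mathrm{Hom}_R(R/J,R)$ with the $F$-linear dual of $R/J\cong S$. This yields $\dim_F C_A(S)=\dim_F S=n$, hence $C_A(S)=S$.

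For (ii), a derivation $\delta\colon S\to A$ gives rise, by the Leibniz rule, to an $F$-algebra embedding $\iota'\colon S\hookrightarrow A[\epsilon]$, $s\mapsto s+\delta(s)\epsilon$, where $A[\epsilon]:=A\otimes_F F[\epsilon]/(\epsilon^{2})$ is a central simple $F[\epsilon]$-algebra of degree $n$. Both $\iota'$ and the standard inclusion $\iota$ extend $F[\epsilon]$-linearly to embeddings of $S\otimes_F F[\epsilon]$ into $A[\epsilon]$, whose images are commutative Frobenius $F[\epsilon]$-subalgebras of the correct $F[\epsilon]$-dimension. Applying (i) over $F[\epsilon]$ shows both images are self-centralizing, and a Skolem--Noether-type argument in this setting produces a unit $u\in A[\epsilon]^{\times}$ intertwining the two embeddings. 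Writing $u=u_{0}+u_{1}\epsilon$, the degree-zero part of the conjugation equation forces $u_{0}\in C_A(S)=S$; after right-multiplying by $u_{0}^{-1}$, one obtains a conjugator of the form $1+a\epsilon$, and expanding $(1+a\epsilon)s=(s+\delta(s)\epsilon)(1+a\epsilon)$ yields $\delta(s)=as-sa=\delta_{a}(s)$ for all $s\in S$. Thus $\delta$ extends to the inner derivation $\delta_{a}$ of $A$.

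The principal obstacles are twofold: first, establishing that the faithful $R$-module $A$ of the correct dimension is in fact the regular module, which relies on self-injectivity of Frobenius algebras and the structure of the injective hulls of their simples; and second, justifying the Skolem--Noether-type conjugation for Frobenius (rather than simple) subalgebras over $F[\epsilon]$, which is not covered by the classical theorem but can be derived from the self-centralizing property provided by (i) in the scalar-extended situation.
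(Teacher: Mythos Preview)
The paper does not prove this theorem; it is quoted without proof from Jacobson's book \cite[(2.2.3)]{jacob}. So there is no ``paper's own proof'' to compare against, and your proposal should be judged on its own merits.

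Your argument for (i) is essentially the standard one (and is, in outline, Jacobson's): the key steps --- faithfulness of the $R=S\otimes_F S$-action on $A$, the Frobenius (hence self-injective) property of $R$, and the conclusion that a faithful $R$-module of $F$-dimension equal to $\dim_F R$ must be the regular module --- are all correct, though the last step deserves a line of justification (over each local factor of $R$, faithfulness forces the unique simple socle to act nontrivially, producing a cyclic submodule isomorphic to that factor). The identification of $C_A(S)$ with $\mathrm{ann}_R(J)$ and the Frobenius duality computation are fine.

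Your argument for (ii), however, has a real gap. The ``Skolem--Noether-type argument'' you invoke is exactly the hard part, and self-centralization alone does not supply it: Skolem--Noether requires the subalgebra to be simple (or separable), and $S$ is neither in general. Concretely, two $F[\epsilon]$-embeddings of $S[\epsilon]$ into $A[\epsilon]$ give two $R[\epsilon]$-module structures on $A[\epsilon]$, both isomorphic to the regular module by (i); but an $R[\epsilon]$-module isomorphism between them is merely a right-$S[\epsilon]$-linear bijection intertwining the left actions, and $\End_{S[\epsilon]}(A[\epsilon])$ is strictly larger than $A[\epsilon]$ acting by left multiplication, so you cannot conclude that the intertwiner is inner. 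The clean way to finish, once you have (i), is cohomological: since $A\cong R$ as $R$-modules and $R$ is self-injective, $\mathrm{Ext}^1_R(S,R)=0$; but $\mathrm{Ext}^1_R(S,A)=HH^1(S,A)=\mathrm{Der}_F(S,A)/\mathrm{InnDer}(S,A)$, so every derivation $S\to A$ is inner. This replaces your unjustified conjugacy step and is in the spirit of Jacobson's treatment.
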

For further properties of Frobenius algebras see \cite{jacob} .

The {\it minimum rank} of a finite dimensional $F$-algebra $A$ which is denoted by $r_F(A)$ is the minimum number $r$
such that $A$ can be generated as an $F$-algebra by $r$ elements.
Also the {\it Loewy length} of $A$ which is denoted by $\ell\ell(A)$ is defined as the smallest positive integer $l$ such that $J(A)^l=0$; in other words $\ell\ell(A)$ is the nilpotency index of the Jacobson radical of $A$.

Let $A$ be a central simple algebra over a field $F$.
An {\it involution} on $A$ is an anti-automorphism $\sigma$ of $A$ such that $\sigma^2=\id$.
The involution $\sigma$ is called of {\it the first kind} if $\sigma|_F=\id$.
The set of {\it alternating} and {\it symmetric} elements of $(A,\sigma)$ are defined as follows:
\[\alt(A,\sigma)=\{a-\sigma(a):a\in A\},\quad \sym(A,\sigma)=\{a\in A:\sigma(a)=a\}.\]

An involution $\sigma$ of the first kind is said to be of {\it symplectic} type if over a splitting field of $A$, $\sigma$ becomes adjoint to an alternating bilinear form.
Otherwise, $\sigma$ is said to be of {\it orthogonal} type.
If $\car F=2$ and $\sigma$ is of the first kind, then it can be shown that $\sigma$ is of orthogonal type if and only if $1\notin\alt(A,\sigma)$, see \cite[(2.6)]{knus}.
The {\it discriminant} of an involution $\sigma$ of orthogonal type is denoted by $\disc\sigma$, see \cite[(7.1)]{knus}.

Let $F$ be a field of characteristic $2$.
A {\it quaternion algebra} over $F$ is a central simple $F$-algebra of degree $2$.
As an $F$-algebra, every quaternion algebra is generated by two elements $u$ and $v$ subject to the relations
\begin{align*}
u^2+u\in F,\quad v^2\in F^\times\quad {\rm and}\quad uv+vu=v.
\end{align*}
Furthermore $\{1,u,v,uv\}$ is a basis of $Q$ over $F$.

\section{Totally singular conic Frobenius algebras}
\begin{defi}
In analogy with \cite{gar}, we call an algebra $R$ over a field $F$ a {\it totally singular conic algebra} if $x^2\in F$ for every $x\in R$.
\end{defi}

\begin{rem}\label{local}
Let $F$ be a field of characteristic $2$ and let $R$ be a finite dimensional totally singular conic $F$-algebra.
It follows immediately that

$(i)$ $R$ is a local commutative algebra and its unique maximal ideal is $\mathfrak{m}=\{x\in R:x^2=0\}$.

$(ii)$ For every $u\in R\setminus F$, the subalgebra $F[u]$ is a field if and only if $u^2\notin F^2=\{x^2:x\in F\}$.
\end{rem}

\begin{rem}\label{prop}
A local commutative algebra is a Frobenius algebra if and only if it has a unique minimal ideal, see \cite[(2.1.3)]{jacob}.
In particular for a finite dimensional totally singular conic algebra, being a Frobenius algebra is a purely ring theoretic property and dose not depend on the base field.
\end{rem}

\begin{lem}\label{frob}
Let $R$ be a finite dimensional totally singular conic algebra over a field $F$.
If $\dim_F R=2^{r_F(R)}$, then $R$ is a Frobenius algebra.
\end{lem}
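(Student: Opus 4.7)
The plan is to use the hypothesis $\dim_F R = 2^{r_F(R)}$ to produce an explicit tensor product decomposition of $R$ into $2$-dimensional factors and then to transport associative bilinear forms from the factors to $R$; the one substantive point will be the dimension count that promotes a natural surjection to an isomorphism.

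Set $r = r_F(R)$ and pick an $F$-algebra generating set $x_1, \ldots, x_r$ of $R$; write $a_i = x_i^2 \in F$. By Remark \ref{local}$(i)$, $R$ is commutative, so any monomial in the $x_i$'s may be reordered; combined with the relations $x_i^2 = a_i$, this shows that the $2^r$ squarefree monomials $\prod_{i \in S} x_i$, for $S \subseteq \{1, \ldots, r\}$, span $R$ over $F$. The hypothesis $\dim_F R = 2^r$ then forces these monomials to form an $F$-basis of $R$. Consequently, the $F$-algebra homomorphism
\[
\Psi : F[Y_1]/(Y_1^2 - a_1) \otimes_F \cdots \otimes_F F[Y_r]/(Y_r^2 - a_r) \longrightarrow R, \qquad Y_i \longmapsto x_i,
\]
which is well defined by the commutativity of $R$ and the relations $x_i^2 = a_i$, is a surjection between $F$-algebras of the same dimension $2^r$, hence an $F$-algebra isomorphism.

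Each factor $R_i := F[Y_i]/(Y_i^2 - a_i)$ is a $2$-dimensional commutative $F$-algebra carrying a nondegenerate associative bilinear form $\mathfrak{b}_i : R_i \times R_i \to F$, defined by letting $\mathfrak{b}_i(p, q)$ be the coefficient of $Y_i$ in the product $pq$; in the basis $\{1, Y_i\}$ its Gram matrix is $\bigl(\begin{smallmatrix} 0 & 1 \\ 1 & 0 \end{smallmatrix}\bigr)$, of nonzero determinant. The tensor product form $\mathfrak{b} := \mathfrak{b}_1 \otimes \cdots \otimes \mathfrak{b}_r$ on $R_1 \otimes_F \cdots \otimes_F R_r$ is nondegenerate, and associativity extends from the individual factors to pure tensors via
\[
\mathfrak{b}\bigl((u_1 \otimes \cdots \otimes u_r)(v_1 \otimes \cdots \otimes v_r),\, w_1 \otimes \cdots \otimes w_r\bigr) = \prod_i \mathfrak{b}_i(u_i v_i, w_i) = \prod_i \mathfrak{b}_i(u_i, v_i w_i),
\]
and then by bilinearity to the entire tensor product. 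Pulling $\mathfrak{b}$ back along $\Psi$ yields a nondegenerate associative bilinear form on $R$, and hence $R$ is Frobenius. An alternative finish would be to invoke Remark \ref{prop} and verify directly, under the decomposition $R \cong \bigotimes_i R_i$, that the socle of $R$ is a single minimal ideal; but the bilinear-form route is the most economical.
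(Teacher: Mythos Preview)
Your argument is correct and follows essentially the same route as the paper: both isolate a minimal generating set, use the dimension hypothesis to promote the natural surjection $\bigotimes_i F[x_i]\to R$ to an isomorphism, and then deduce the Frobenius property from the tensor factors. The only difference is in the finish---the paper cites Jacobson for the facts that single-generated algebras are Frobenius and that tensor products of Frobenius algebras are Frobenius, whereas you exhibit the associative nondegenerate form on each $F[Y_i]/(Y_i^2-a_i)$ by hand and tensor them up; this makes your version self-contained at the cost of a few extra lines.
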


\begin{proof}
Set $n=r_F(R)$ and write $R=F[u_1,\cdots,u_n]$ for some $u_1,\cdots,u_n\in R$.
The $F$-algebra homomorphisms $f_i:F[u_i]\rightarrow R$ defined by $f_i(u_i)=u_i$, $i=1,\cdots,n$, induce a surjective $F$-algebra homomorphism $f:F[u_1]\otimes\cdots\otimes F[u_n]\rightarrow R$.
By dimension count $f$ is an isomorphism.
We know that single generated algebras (i.e., algebras of the form $F[u]$) and the tensor product of Frobenius algebras are Frobenius (see \cite[(2.1.4)]{jacob} and \cite[(2.1.2)]{jacob}),
hence $R$ is a Frobenius $F$-algebra.
\end{proof}

\begin{rem}\label{rn}
The converse of (\ref{frob}) is not necessarily true.
Here we construct a counter example.
Let $F$ be a field of characteristic $2$ and for $n\geq4$, let $R_n$ be the $n$-dimensional algebra over $F$ with the basis
$\{1,u_1,\cdots,u_{n-1}\}$ subject to the relations
\begin{align}\label{re}
u_i^2=u_1u_i=0&,\quad 1\leq i\leq n-1,\nonumber\\
u_iu_j=u_1&,\quad 2\leq i\neq j\leq n-1.
\end{align}
It is easy to see that the above relations imply that $u_1u_i=u_iu_1$ for every $i$.
It follows that $R_n$ is a totally singular conic algebra with the unique maximal ideal $\mathfrak{m}=Fu_1+\cdots+Fu_{n-1}$.
In particular for every element $x\in R_n$ one can write $x=a+m$, where $a\in F$ and $m\in\mathfrak{m}$.
Set $I:=R_nu_1=Fu_1$.
Then $I$ is a minimal ideal of $R_n$.
Let $x,y\in R_n$ and write $x=a+m$ and $y=b+m'$ where $a,b\in F$ and $m,m'\in\mathfrak{m}$.
By (\ref{re}) we have $mm'\in I$.
So there exist $c\in F$ such that
\begin{align}\label{n1}
xy=ay+bx+cu_1-ab.
\end{align}
Let $r=r_F(R_n)$ and write $R_n=F[v_1,\cdots,v_r]$ for some $v_1,\cdots,v_r\in R_n$.
Set $S=Fv_1+\cdots+Fv_r\subseteq R_n$.
By (\ref{n1}) every monomial in terms of $v_1,\cdots,v_r$ belongs to the subspace $S+Fu_1+F$.
Since these monomials generate $R_n$ as an $F$-algebra and $\dim_FR_n=n$, we obtain $r\geq n-2$.
On the other hand $R_n$ is generated as an $F$-algebra by the elements $u_2,\cdots,u_{n-1}$, so $r_F(R_n)=r=n-2$.

Now suppose that $n$ is even.
We claim that $I$ is the unique minimal ideal of $R_n$.
Let $J\neq\{0\}$ be an ideal of $R_n$ and let $0\neq x\in J$.
As $J\subseteq\mathfrak{m}$ we have
\begin{align}\label{re2}
x=\sum_{i=1}^{n-1}a_iu_i,
\end{align}
 for $a_1,\cdots,a_{n-1}\in F$.
 Set $b_i=(\sum_{j=2}^{n-1}a_j)-a_i$, $i=2,\cdots,n-1$.
Multiplying (\ref{re2}) by $u_i$ we get $b_iu_1\in J$, $i=2,\cdots,n-1$.
If $b_i\neq0$ for some $2\leq i\leq n-1$ then $u_1\in J$ and $I\subseteq J$.
Otherwise $b_2=\cdots=b_{n-1}=0$ which leads to a system of linear equations with respect to $a_2,\cdots,a_{n-1}$.
As $n$ is even (and $\car F=2$) it is easy to see that the only solution of this system is the trivial solution, i.e.,
 $a_2=\cdots=a_{n-1}=0$.
Since $x\neq0$ we obtain $a_1\neq0$, so again $u_1\in J$, i.e., $I\subseteq J$.
So the claim is proved and by (\ref{prop}), $R_n$ is a Frobenius algebra.
For every even integer $n\geq6$ the algebra $R_n$ is a totally singular conic algebra which is Frobenius, but $\dim_F R_n=n\neq2^{n-2}=2^{r_F(R_n)}$.
Also even if $\dim_F R$ is a power of $2$, the converse of (\ref{frob}) is not true; take $n=2^k$, $k\geq3$ and $R=R_n$.
\end{rem}

\begin{rem}\label{tower}
Let $R$ be a finite dimensional totally singular conic algebra over a field $F$ of characteristic $2$ and
let $L\supseteq F$ be a subfield of $R$.
If $R$ is a field, then $r_L(R)+r_F(L)=r_F(R)$.
This fact is an easy consequence of the multiplication formula $[R:F]=[R:L][L:F]$ and the fact that $[K:F]=2^{r_F(K)}$
for every subfield $K\supseteq F$ of $R$.
\end{rem}

\begin{lem}\label{sub}
Let $R$ be a finite dimensional totally singular conic algebra over a field $F$ of characteristic $2$ and let $\mathfrak{m}$ be its unique maximal ideal mentioned in {\rm (\ref{local})}.
Set $r=r_F(R/\mathfrak{m})$ and $n=r_F(R)$.
\begin{itemize}
\item[$(i)$] If $K\supseteq F$ is a maximal subfield of $R$, then the residue field $R/\mathfrak{m}$ and $K$ are isomorphic as $F$-algebras.
    In particular $K$ is unique up to $F$-algebra isomorphism and $[K:F]=\dim_F R/\mathfrak{m}=2^r$.
    Also for every $x\in R$ we have $x^2\in K^2$.
\item[$(ii)$] There exist a maximal subfield $K\supseteq F$ of $R$ and $u_1,\cdots,u_{n-r}\in\mathfrak{m}$ such that $R=K[u_1,\cdots,u_{n-r}]$.

\item[$(iii)$] We have $\ell\ell(R)\leq r_F(R)-r_F(R/\mathfrak{m})+1$.
\end{itemize}
\end{lem}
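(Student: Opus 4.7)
For part $(i)$, my plan is to show that the composition $K \hookrightarrow R \twoheadrightarrow R/\mathfrak{m}$ is an isomorphism of $F$-algebras. Injectivity is automatic because $K$ is a field. For surjectivity I argue by contradiction: if some class $\bar y$ lies outside the image of $K$, then any lift $y \in R$ avoids $\mathfrak{m}$, so $y^2 \in F^\times$, and moreover $y^2 \notin K^2$---otherwise $y^2 = k^2$ with $k \in K$ would yield $(y-k)^2 = y^2 + k^2 = 0$ in characteristic $2$, forcing $\bar y = \bar k$. Non-squareness of $y^2$ in $K$ makes $T^2 - y^2$ irreducible over $K$, so $K[y]$ is a field properly containing $K$ inside $R$, contradicting the maximality of $K$. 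Uniqueness of $K$ up to $F$-isomorphism then follows, and the equality $[K:F] = 2^r$ comes from a standard tower argument in which each new generator contributes a degree $2$ extension (using $k_i^2 \in F$). Finally, for any $x \in R$ I choose $k \in K$ with $\bar k = \bar x$; then $x - k \in \mathfrak{m}$, so $(x-k)^2 = 0$ and $x^2 = k^2 \in K^2$. I expect the maximality step to be the main obstacle, as it crucially exploits the characteristic $2$ identity $(y-k)^2 = y^2 + k^2$ to translate the set-theoretic condition ``$\bar y$ lies in the image of $K$'' into the algebraic condition ``$y^2 \in K^2$''.

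For part $(ii)$, I start from minimal $F$-generators $w_1,\dots,w_n$ of $R$. The images $\bar w_1,\dots,\bar w_n$ generate $R/\mathfrak{m}$, and by iteratively discarding those already lying in the subfield generated by the remaining ones, I extract a subset $I \subseteq \{1,\dots,n\}$ of size $r$ whose images minimally generate $R/\mathfrak{m}$. Set $K = F[w_j : j \in I]$. As in the proof of (\ref{frob}), $K$ is a quotient of $\bigotimes_{j \in I} F[w_j]$, so $\dim_F K \leq 2^r$; combined with the surjection $K \twoheadrightarrow R/\mathfrak{m}$ and $\dim_F R/\mathfrak{m} = 2^r$ from $(i)$, this forces $\dim_F K = 2^r$ and the surjection to be an isomorphism. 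Hence $K$ is a field and, by $(i)$, a maximal subfield of $R$. For each $i \notin I$, I lift a polynomial expression for $\bar w_i$ in $\{\bar w_j : j \in I\}$ to an element $k_i \in K$ and set $u_i = w_i - k_i \in \mathfrak{m}$; then $R = F[w_1,\dots,w_n] = K[u_i : i \notin I]$ is generated as a $K$-algebra by the $n-r$ elements $u_i$ of $\mathfrak{m}$.

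Part $(iii)$ follows from $(ii)$ by a pigeonhole argument. Each $u_i$ lies in $\mathfrak{m}$ and hence satisfies $u_i^2 = 0$; since $R$ is commutative, it is spanned as a $K$-module by squarefree monomials in the $u_i$. A product of $n-r+1$ elements of $\mathfrak{m}$ expands as a $K$-linear combination of such monomials of total degree at least $n-r+1$ in only $n-r$ variables, so each monomial must repeat some $u_i$ and therefore vanish. This yields $\mathfrak{m}^{n-r+1} = 0$, whence $\ell\ell(R) \leq n-r+1$.
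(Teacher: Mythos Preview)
Your argument is correct. Parts $(i)$ and $(iii)$ follow essentially the same route as the paper: for $(i)$ the paper also proves that the composite $K\hookrightarrow R\twoheadrightarrow R/\mathfrak m$ is an isomorphism, first observing (from maximality of $K$) that $x^2\in K^2$ for all $x\in R$ and then deducing surjectivity, while you run the same implication contrapositively; for $(iii)$ both arguments are the identical pigeonhole on squarefree monomials in the $u_i$.

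Part $(ii)$ is where you genuinely diverge. The paper proceeds by induction on $n=r_F(R)$: if some generator $v_1$ satisfies $v_1^2\notin F^2$, it replaces $F$ by $L=F[v_1]$, uses the tower formula (\ref{tower}) to see that $r_L(R)=n-1$ and $r_L(R/\mathfrak m)=r-1$, and applies the induction hypothesis over $L$. You instead give a one-shot construction: from a minimal generating set $\{w_1,\dots,w_n\}$ of $R$ you select a subset $I$ whose residues minimally generate $R/\mathfrak m$, set $K=F[w_j:j\in I]$, and force $K\simeq R/\mathfrak m$ by the squeeze $\dim_F K\le 2^{|I|}$ (tensor-product bound) and $\dim_F K\ge \dim_F R/\mathfrak m=2^r$ (surjection). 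This is slicker and avoids the inductive bookkeeping; it also makes visible that the maximal subfield can be taken to be generated by a subset of any chosen minimal generating set of $R$. The one point you pass over lightly is why your iterative discarding yields $|I|=r$ exactly (a priori a minimal generating set could be larger than a minimum one); but this follows from the same tower argument you invoke in $(i)$: in the chain $F\subset F[\bar w_{j_1}]\subset\cdots\subset R/\mathfrak m$ each step has degree $2$ (a degree-$1$ step would let you discard a generator), so $2^{|I|}=[R/\mathfrak m:F]=2^r$. With that remark, your proof of $(ii)$ is complete.
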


\begin{proof}
$(i)$ Since $F\subseteq K\subseteq R$, $R$ is a finite dimensional totally singular conic $K$-algebra as well.
Since $K$ is maximal, using (\ref{local} $(ii)$) we have $x^2\in K^2$ for every $x\in R$.
Consider the map $\phi:K\rightarrow R/\mathfrak{m}$ defined by $\phi(x)=x+\mathfrak{m}$.
Clearly $\phi$ is an injective $F$-algebra homomorphism.
We show that $\phi$ is surjective.
Let $x+\mathfrak{m}\in R/\mathfrak{m}$, where $x\in R$.
As $x^2\in K^2$, there exists $y\in K$ such that $x^2=y^2\in K^2$, i.e., $(y+x)^2=0$.
So we have $y+x\in \mathfrak{m}$ which implies that $\phi(y)=y+\mathfrak{m}=x+\mathfrak{m}$.

$(ii)$ By induction on $n$, we first prove that there exist a maximal subfield $K\supseteq F$ of $R$ and $u_1,\cdots,u_{n-r}\in R$ such that $R=K[u_1,\cdots,u_{n-r}]$.
Choose $v_1,\cdots,v_n\in R$ such that $R=F[v_1,\cdots,v_n]$.
If $v_i^2\in F^2$, for every $i=1,\cdots,n$, then for every $u\in R$ we obtain $u^2\in F^2$, so (\ref{local} $(ii)$) implies that every maximal subfield $K\supseteq F$ of $R$ reduces to $F$, i.e., $r=0$ and we are done.
Otherwise (by re-indexing if necessary) we may assume that $v_1^2\notin F^2$.
Then $L:=F[v_1]$ is a quadratic extension of $F$ and $R=L[v_2,\cdots,v_n]$.
By (\ref{prop}), $R$ is a Frobenius $L$-algebra.
We have $r_L(R)=n-1$, also (\ref{tower}) implies that $r_L(R/\mathfrak{m})=r_F(R/\mathfrak{m})-r_F(L)=r-1$. So by induction hypothesis there exist a maximal subfield $K\supseteq L$ of $R$ and $u_1,\cdots,u_{n-r}\in R$
 such that $R=K[u_1,\cdots$ $,u_{n-r}]$.

Since $K$ is maximal we have $u_i^2\in K^2$, $i=1,\cdots,n-r$.
Replacing $u_i$ with $u_i+\alpha_i$ for some $\alpha_i\in K$,
we may assume that $u_i^2=0$, $i=1,\cdots,n-r$.

$(iii)$
By the previous part, there exist a maximal subfield $K\supseteq F$ and $u_1,\cdots,u_{n-r}\in\mathfrak{m}$ such that $R=K[u_1,\cdots,u_{n-r}]$.
Consider arbitrary elements $x_1,\cdots,x_{n-r+1}\in \mathfrak{m}$.
Since $K\cap\mathfrak{m}=\{0\}$, every $x_i$ can be written as
\begin{align*}
x_i&=\sum_{\substack {1\leq l\leq n-r\\ 1\leq i_1<\cdots<i_l\leq n-r}}\alpha_{i_1\cdots i_l} u_{i_1}\cdots u_{i_l},\quad \ \alpha_{i_1\cdots i_l}\in K.
\end{align*}
So every monomial in the expansion of $x_1\cdots x_{n-r+1}$ in terms of $u_1,\cdots,u_{n-r}$ has two identical $u_i$'s.
As $u_i^2=0$ we have $x_1\cdots x_{n-r+1}$ $=0$.
Thus we obtain $\mathfrak{m}^{n-r+1}=0$, i.e., $\ell\ell(R)\leq r_F(R)-r_F(R/\mathfrak{m})+1$.
\end{proof}

The following result shows that (\ref{tower}) is also true for every finite dimensional totally singular conic algebra:
\begin{cor}\label{rf}
Let $R$ be a finite dimensional totally singular conic algebra over a field $F$ of characteristic $2$.
If $L\supseteq F$ is a subfield of $R$ then $r_L(R)+r_F(L)=r_F(R)$.
\end{cor}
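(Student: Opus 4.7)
The plan is to apply Lemma~\ref{sub}\,$(ii)$ both over $F$ and over $L$ and compare the resulting structural decompositions via Remark~\ref{tower}. Set $\bar R := R/\mathfrak{m}$. Since $L$ is a field, the composite $L\hookrightarrow R\twoheadrightarrow\bar R$ is injective, so $L$ embeds as a subfield of the totally singular conic field $\bar R$; applying Remark~\ref{tower} to $\bar R$ therefore gives
\[
r_F(\bar R)=r_L(\bar R)+r_F(L).
\]

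Next, I would apply Lemma~\ref{sub}\,$(ii)$ over $F$ to produce a maximal subfield $K_F$ of $R$ and a presentation $R=K_F[u_1,\ldots,u_{n-r}]$ with $u_i\in\mathfrak{m}$, $u_i^2=0$, where $n=r_F(R)$ and $r=r_F(\bar R)$. Likewise, applying Lemma~\ref{sub}\,$(ii)$ to $R$ viewed as an $L$-algebra yields a maximal subfield $K_L\supseteq L$ of $R$ and a presentation $R=K_L[v_1,\ldots,v_{n'-r'}]$, with $n'=r_L(R)$ and $r'=r_L(\bar R)$. Note that any maximal subfield over $L$ is automatically a maximal subfield over $F$, since by Lemma~\ref{sub}\,$(i)$ every maximal subfield has the same $F$-dimension $2^{r_F(\bar R)}$.

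The crux, and main obstacle, is to show that the number of generators of $R$ over a maximal subfield is an intrinsic invariant of $R$, namely $d:=\dim_{\bar R}(\mathfrak{m}/\mathfrak{m}^2)$, independent of the choice of maximal subfield. Indeed, via the identification $K\cong\bar R$ furnished by Lemma~\ref{sub}\,$(i)$, $R$ becomes a local commutative $K$-algebra with residue field $K$, and a Nakayama-type argument---which is valid because $\mathfrak{m}$ is nilpotent by Lemma~\ref{sub}\,$(iii)$---shows that the minimum number of $K$-algebra generators of $R$ equals $\dim_K(\mathfrak{m}/\mathfrak{m}^2)=d$. Once this is granted, both presentations above must use exactly $d$ generators beyond the maximal subfield, so $n-r=d=n'-r'$; combining this equality with the tower relation from the first paragraph gives
\[
r_F(R)-r_L(R)=(r+d)-(r'+d)=r_F(\bar R)-r_L(\bar R)=r_F(L),
\]
which is the desired identity.
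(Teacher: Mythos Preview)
Your argument is correct and genuinely different from the paper's. The paper proceeds by induction on $r_F(L)$, reducing to the case $r_F(L)=1$: it writes $L=F[u]$, finds $v_1\in K$ with $v_1^2=u^2$ using Lemma~\ref{sub}\,$(i)$, and then explicitly manipulates the element $m=v_1+u\in\mathfrak{m}$ to exhibit $n-1$ generators of $R$ over $L$. Your approach is more structural: you identify the intrinsic invariant $d=\dim_{\bar R}(\mathfrak{m}/\mathfrak{m}^2)$ and show via Nakayama that $r_K(R)=d$ for \emph{every} maximal subfield $K$, whence $r_F(R)=r_F(\bar R)+d$ and $r_L(R)=r_L(\bar R)+d$; subtracting and invoking Remark~\ref{tower} for $\bar R$ finishes. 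This has the advantage of isolating exactly why the formula holds and yields the auxiliary fact $r_K(R)=\dim_{\bar R}(\mathfrak{m}/\mathfrak{m}^2)$ as a byproduct; the paper's argument is more hands-on and avoids appealing to Nakayama.

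One point of exposition you should tighten: from Lemma~\ref{sub}\,$(ii)$ and $r_{K_F}(R)=d$ you only get $d\leq n-r$. To conclude $n-r=d$ you also need the trivial inequality $n=r_F(R)\leq r_F(K_F)+r_{K_F}(R)=r+d$, obtained by concatenating generators of $K_F$ over $F$ with generators of $R$ over $K_F$. You use this equality in the final displayed line, so state it explicitly rather than asserting that the Lemma~\ref{sub}\,$(ii)$ presentation ``must use exactly $d$ generators.''
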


\begin{proof}
If $L=F$ (in other words $r_F(L)=0$) the result trivially holds.
So suppose that $r_F(L)\geq 1$.
We obviously have $r_F(R)\leq r_L(R)+r_F(L)$.
So it is enough to show that $r_F(R)\geq r_L(R)+r_F(L)$.
We prove this for the case where $r_F(L)=1$.
The general case follows from induction.
Let $\mathfrak{m}$ be the unique maximal ideal of $R$ mentioned in {\rm (\ref{local})}, $r=r_F(R/\mathfrak{m})$ and $n=r_F(R)$.
By (\ref{sub} $(ii)$) there exist a maximal subfield $K\supseteq F$ of $R$ and $u_1,\cdots,u_{n-r}\in\mathfrak{m}$ such that $R=K[u_1,\cdots$ $,u_{n-r}]$.
Write $L=F[u]$ for some $u\in R$ with $u^2\in F^\times\setminus F^{\times2}$.
Extend $L$ to a maximal subfield $K'$ of $R$.
By (\ref{sub} $(i)$) we have $K'\simeq K$, so there exists $v_1\in K$ such that $v_1^2=u^2\in F^\times\setminus F^{\times2}$.
Set $m:=v_1+u\in R$.
Since $m^2=(v_1+u)^2=0$, we have $m\in \mathfrak{m}$.
As $r_F(K)=r$ and $K$ is a field, by (\ref{tower}) we have $r_{F[v_1]}(K)=r-1$.
So there exist $v_2,\cdots,v_r\in K$ such that $K=F[v_1,\cdots,v_r]$.
Then $R=F[v_{1},\cdots,v_r,u_{1},\cdots,u_{n-r}]$.
Set $S=L[v_{2},\cdots,v_r,u_{1},\cdots,u_{n-r}]$.
We claim that $S=R$ which implies that $r_L(R)\leq n-1=r_F(R)-1$.
It is enough to show that $v_1\in S$.
As $R=K[u_{1},\cdots$ $,u_{n-r}]$, one can write
\begin{align}\label{j}
m&=\sum_{\substack {1\leq l\leq n-r\\ 1\leq i_1<\cdots<i_l\leq n-r}}\alpha_{i_1\cdots i_l} u_{i_1}\cdots u_{i_l},\quad \ \alpha_{i_1\cdots i_l}\in K.
\end{align}
Every $\alpha_{i_1\cdots i_l}\in K=F[v_1,\cdots,v_r]$ can be written as
\[\alpha_{i_1\cdots i_l}=\beta_{i_1\cdots i_l}+\gamma_{i_1\cdots i_l}v_1,\]
where $\beta_{i_1\cdots i_l},\gamma_{i_1\cdots i_l}\in F[v_2,\cdots,v_r]\subseteq S$.
So by (\ref{j}) there exist $s,s'\in S$ such that $m=v_1s+s'$.
As $u_1,\cdots,u_{n-r}\in\mathfrak{m}$ and $\mathfrak{m}$ is an ideal, we obtain $s,s'\in\mathfrak{m}$, so $s,s'\in S\cap\mathfrak{m}$.
We obtain therefore $u=v_1+m=v_1(1+s)+s'$, so $(1+s)u=v_1+(1+s)s'$, i.e., $v_1=(1+s)(u+s')\in S$.
\end{proof}

\begin{rem}\label{gene}
The statement of (\ref{sub} $(ii)$) can be strengthened as follows:
``For every maximal subfield $K\supseteq F$ of $R$ there exist $u_1,\cdots,u_{n-r}\in\mathfrak{m}$ such that
$R=K[u_1,\cdots,u_{n-r}].$''
In fact by (\ref{rf}) we have $r_K(R)=n-r$.
So there exist $u_1,\cdots,u_{n-r}\in R$ such that $R=K[u_{1},\cdots,u_{n-r}]$.
Since $K$ is maximal we have $u_i^2\in K^2$ for $i=1,\cdots,n-r$.
Replacing $u_i$ with $u_i+\alpha_i$ for some $\alpha_i\in K$,
we may assume that $u_1^2=\cdots=u_{n-r}^2=0$.
\end{rem}

\begin{defi}
Let $R$ be a finite dimensional totally singular conic algebra over a field $F$ of characteristic $2$ and let $\mathfrak{m}$ be its unique maximal ideal.
We say that $R$ is a {\it $\rho$-generated} algebra if $r_F(R)=\rho(R)$ where $\rho(R)=\ell\ell(R)+r_F(R/\mathfrak{m})-1$.
\end{defi}

\begin{prop}\label{max}
Let $F$ be a field of characteristic $2$ and let $R$ be a finite dimensional totally singular conic $F$-algebra.
Then $R$ is $\rho$-generated if and only if $\dim_FR=2^{r_F(R)}$.
In particular every $\rho$-generated totally singular conic algebra is a Frobenius algebra.
\end{prop}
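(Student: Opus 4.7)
The plan is to compare $R$ to a tensor product of truncated polynomial algebras. Set $n=r_F(R)$ and $r=r_F(R/\mathfrak{m})$. By (\ref{gene}) I can fix a maximal subfield $K\supseteq F$ of $R$, with $[K:F]=2^r$, together with elements $u_1,\ldots,u_{n-r}\in\mathfrak{m}$ satisfying $u_i^2=0$, such that $R=K[u_1,\ldots,u_{n-r}]$. The universal property of the tensor product then furnishes a surjective $K$-algebra homomorphism
\[
\phi : A := K[X_1,\ldots,X_{n-r}]/(X_1^2,\ldots,X_{n-r}^2)\twoheadrightarrow R,\qquad X_i\mapsto u_i,
\]
from an algebra of dimension $2^{n-r}$ over $K$. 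In particular $\dim_F R\le 2^{r}\cdot 2^{n-r}=2^n$, with equality exactly when $\phi$ is an isomorphism.

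Next I would pin down the structure of $A$. Viewing $A$ as the tensor product over $K$ of $n-r$ copies of $K[X]/(X^2)$, the argument of (\ref{frob}) shows that $A$ is a Frobenius $K$-algebra, and a direct inspection reveals that it is local with maximal ideal $(X_1,\ldots,X_{n-r})$ whose $(n-r)$-th power is the one-dimensional subspace $K\cdot X_1\cdots X_{n-r}$. By (\ref{prop}) this latter subspace is the unique minimal ideal of $A$, so any nonzero ideal of $A$ contains $X_1\cdots X_{n-r}$. Consequently $\phi$ is an isomorphism if and only if $u_1\cdots u_{n-r}\ne 0$.

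To connect this with $\rho$-generation, I would use the decomposition $R=K\oplus\mathfrak{m}$ from (\ref{sub}(i)) to observe that $\mathfrak{m}$ is generated as an ideal of $R$ by $u_1,\ldots,u_{n-r}$; since $u_i^2=0$ and $R$ is commutative, any product of $n-r$ of the $u_i$ that repeats an index vanishes, so the only surviving term in an expansion of $\mathfrak{m}^{n-r}$ is $u_1\cdots u_{n-r}$, giving $\mathfrak{m}^{n-r}=K\cdot u_1\cdots u_{n-r}$. Combining this with the bound $\ell\ell(R)\le n-r+1$ from (\ref{sub}(iii)), the chain of equivalences
\[
R \text{ is } \rho\text{-generated}\iff \ell\ell(R)=n-r+1\iff \mathfrak{m}^{n-r}\ne 0\iff u_1\cdots u_{n-r}\ne 0\iff \dim_F R=2^n
\]
yields the claim, and the concluding assertion follows at once from (\ref{frob}). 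I anticipate the main technical hurdle to be the clean identification $\mathfrak{m}^{n-r}=K\cdot u_1\cdots u_{n-r}$ and the verification that the unique minimal ideal of $A$ really is $K\cdot X_1\cdots X_{n-r}$ rather than something smaller; both hinge on the vanishing $u_i^2=0$ arranged in (\ref{gene}) and on the Frobenius/minimal-ideal dictionary of (\ref{prop}).
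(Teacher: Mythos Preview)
Your argument is correct. It differs from the paper's mainly in organization: the paper treats the two implications separately, whereas you package both into the single surjection $\phi:A\twoheadrightarrow R$ and reduce everything to whether $u_1\cdots u_{n-r}$ vanishes.

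For the direction ``$\rho$-generated $\Rightarrow \dim_F R=2^n$'' the paper does \emph{not} invoke (\ref{gene}); it argues directly from $\mathfrak{m}^{n-r}\neq 0$ by picking arbitrary $v_1,\ldots,v_{n-r}\in\mathfrak{m}$ with nonzero product and then proving, via a minimality argument, that the $2^{n-r}$ monomials in the $v_i$ are linearly independent over $K$. Your route instead fixes the generators from (\ref{gene}) once and for all and replaces the linear-independence computation by the structural observation that $A$ is Frobenius with unique minimal ideal $K\cdot X_1\cdots X_{n-r}$, so any nonzero kernel would have to contain it. This is a cleaner and more conceptual substitute for the paper's hands-on argument. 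For the converse direction the two proofs essentially coincide: both use (\ref{gene}) and a dimension count to force $u_1\cdots u_{n-r}\neq 0$.

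The identification $\mathfrak{m}^{n-r}=K\cdot u_1\cdots u_{n-r}$ and the description of the socle of $A$ are indeed routine once $u_i^2=0$ is arranged, so the ``technical hurdles'' you anticipate are not serious. One small stylistic point: you could bypass the Frobenius/minimal-ideal detour entirely, since the fact that $\phi$ is an isomorphism exactly when $u_1\cdots u_{n-r}\neq 0$ also follows from the elementary observation that the monomials $X_{i_1}\cdots X_{i_l}$ form a $K$-basis of $A$ and any nontrivial $K$-linear relation among their images can be multiplied by suitable $u_j$'s to yield $u_1\cdots u_{n-r}=0$; this is in effect what the paper does.
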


\begin{proof}
By (\ref{local}), $R$ is a local commutative algebra.
Let $\mathfrak{m}$ be the unique maximal ideal of $R$, $r=r_F(R/\mathfrak{m})$ and $n=r_F(R)$.

Suppose that $R$ is a $\rho$-generated algebra.
As $r_F(R)=n$ we have $\dim_FR\leq 2^n$.
So it is enough to show that $\dim_FR\geq 2^n$.
Let $K\supseteq F$ be a maximal subfield of $R$ and write $K=F[u_1,\cdots,u_r]$ for some $u_1,\cdots,u_r\in K$.
Since $\ell\ell(R)=n-r+1$ we have $\mathfrak{m}^{n-r}\neq 0$, so there exist $v_1,\cdots,v_{n-r}$ $\in\mathfrak{m}$ such that $v_1\cdots v_{n-r}\neq0$.
We show that
\begin{align}\label{s1}
\dim_K K[v_1,\cdots,v_{n-r}]=2^{n-r},
\end{align}
 which concludes that
\[\dim_FR\geq\dim_FK\cdot\dim_KK[v_1,\cdots,v_{n-r}]=2^n.\]
In order to prove (\ref{s1}), we claim that the set
\[W=\{1\}\cup\{v_{i_1}\cdots v_{i_l}:1\leq i_1<\cdots<i_l\leq n-r,~ 1\leq l\leq n-r\},\]
is linearly independent over $K$.
Suppose that
\begin{align}\label{v}
\sum_{\substack {1\leq l\leq n-r\\ 1\leq i_1<\cdots<i_l\leq n-r}}\alpha_{i_1\cdots i_l} v_{i_1}\cdots v_{i_l}=\alpha,\quad \ \alpha,\alpha_{i_1\cdots i_l}\in K,
\end{align}
where at least one of the above terms is nonzero and the number of nonzero terms is minimal.
Since the left side of (\ref{v}) belongs to $\mathfrak{m}$ we have $\alpha=0$.
 As $v_j^2=0$, $j=1,\cdots,n-r$, multiplying the equality (\ref{v}) by $v_j$ implies that either $v_j$ does not appear in the above sum or appears in all terms.
It follows that the only nonzero term of the left side of (\ref{v}) is a multiple of $v_{j_1}\cdots v_{j_l}$ for some
$1\leq j_1<\cdots<j_l\leq n-r$ and $1\leq l\leq n-r$
which contradicts the assumption $v_1\cdots v_{n-r}\neq0$.
So the claim is proved and $\dim_K K[v_1,\cdots,v_{n-r}]=2^{n-r}$.

Conversely suppose that $\dim_FR=2^n$.
As $\dim_FK=2^r$, we have $\dim_KR=2^{n-r}$.
By (\ref{gene}) there exist $u_1,\cdots,u_{n-r}\in\mathfrak{m}$ such that $R=K[u_1,\cdots,u_{n-r}]$.
It follows that $u_{1}\cdots u_{n-r}\neq0$, i.e., $\mathfrak{m}^{n-r}\neq0$.
So $\ell\ell(R)\geq n-r+1$ and thanks to (\ref{sub} $(iii)$), $R$ is a $\rho$-generated algebra.
The last statement of the result follows from (\ref{frob}).
\end{proof}

\begin{rem}
For $n=2^k$, $k\geq3$, the totally singular conic algebra $R_n$ constructed in (\ref{rn}) is a Frobenius algebra which is not $\rho$-generated, because $\dim_FR_n\neq 2^{r_F(R_n)}$.
So the converse of the second statement of (\ref{max}) does not hold.
\end{rem}

The next result follows from (\ref{max}) and the standard properties of tensor product.
\begin{cor}\label{tensorr}
Let $R$ and $R'$ be two finite dimensional $\rho$-generated totally singular conic algebras over a field $F$ of characteristic $2$.
Then $R\otimes_FR'$ is also a $\rho$-generated totally singular conic $F$-algebra.
\end{cor}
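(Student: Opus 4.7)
My plan is to invoke Proposition \ref{max}, which reduces the claim to two checks: that $R\otimes_F R'$ is totally singular conic, and that $\dim_F(R\otimes_F R')=2^{r_F(R\otimes_F R')}$. The multiplicativity of dimension under tensor product will do most of the work; the one thing I really need beyond that is the general inequality $\dim_F A\leq 2^{r_F(A)}$ for every finite dimensional totally singular conic algebra $A$.

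First I would check that $R\otimes_F R'$ is totally singular conic. By (\ref{local}) both $R$ and $R'$ are commutative, so $R\otimes_F R'$ is commutative as well. Given $x=\sum_i r_i\otimes r'_i$, commutativity together with $\car F=2$ yields $x^2=\sum_i(r_i\otimes r'_i)^2=\sum_i r_i^2\otimes r_i'^2$, and since $r_i^2,r_i'^2\in F$ each term lies in $F\cdot(1\otimes 1)$. Hence $x^2\in F$.

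Next I would compare dimensions and ranks. From (\ref{max}) applied to $R$ and $R'$ one has
\[
\dim_F(R\otimes_F R')=\dim_FR\cdot\dim_FR'=2^{r_F(R)+r_F(R')}.
\]
If $\{u_1,\dots,u_n\}$ and $\{v_1,\dots,v_m\}$ are minimal generating sets for $R$ and $R'$ as $F$-algebras, then the $n+m$ elements $u_i\otimes 1$ and $1\otimes v_j$ generate $R\otimes_F R'$, so $r_F(R\otimes_F R')\leq r_F(R)+r_F(R')$. For the reverse direction I would establish the general bound $\dim_F A\leq 2^{r_F(A)}$ for any finite dimensional totally singular conic $F$-algebra $A$: writing $A=F[w_1,\dots,w_s]$ with $s=r_F(A)$, commutativity of $A$ (see (\ref{local})) and the relations $w_i^2\in F$ force the $2^s$ square-free products $w_{i_1}\cdots w_{i_l}$ with $1\leq i_1<\cdots<i_l\leq s$ to span $A$. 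Applied to $A=R\otimes_F R'$ this gives
\[
2^{r_F(R)+r_F(R')}=\dim_F(R\otimes_F R')\leq 2^{r_F(R\otimes_F R')}\leq 2^{r_F(R)+r_F(R')},
\]
so all three quantities coincide. Hence $\dim_F(R\otimes_F R')=2^{r_F(R\otimes_F R')}$, and (\ref{max}) declares $R\otimes_F R'$ to be $\rho$-generated.

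There is no real obstacle here; the only step that deserves a moment of care is the squaring computation, where one must use both the commutativity of $R\otimes_F R'$ (to collapse $(\sum_i r_i\otimes r'_i)^2$ to a sum of squares in characteristic $2$) and the commutativity of the individual factors (to evaluate $(r\otimes r')^2=r^2\otimes r'^2$). Everything else is bookkeeping with generators and the dimension formula for tensor products.
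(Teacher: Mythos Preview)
Your proposal is correct and is exactly the argument the paper has in mind: the paper's proof consists of the single sentence ``follows from (\ref{max}) and the standard properties of tensor product,'' and you have unpacked precisely those standard properties (multiplicativity of dimension, additivity of generators, the spanning-by-square-free-monomials bound $\dim_F A\le 2^{r_F(A)}$, and the squaring computation in characteristic~$2$). Note that the inequality $\dim_F A\le 2^{r_F(A)}$ is already invoked in the proof of (\ref{max}) itself, so you are not introducing anything new.
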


\begin{lem}\label{clif}
Let $(V,q)$ be a quadratic form over a field $F$ of characteristic $2$ and let $f:V\rightarrow C(V)$ be an $F$-linear map such that
 $f(v)\in Z(C(V))$ for every $v\in V$.
 Then the map $f$ can be uniquely extended to an $F$-derivation $\delta:C(V)\rightarrow C(V)$.
\end{lem}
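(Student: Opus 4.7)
The plan is to exploit the universal property of the tensor algebra. Let $T(V)$ denote the tensor algebra of $V$ and let $\pi:T(V)\to C(V)$ be the canonical surjection, whose kernel $I$ is the two-sided ideal generated by the elements $v\otimes v-q(v)$ for $v\in V$. Viewing $C(V)$ as a $T(V)$-bimodule via $\pi$, the standard universal property of $T(V)$ produces a unique $F$-derivation $D:T(V)\to C(V)$ extending $f$, given on decomposable tensors by the Leibniz rule
\[D(v_1\otimes\cdots\otimes v_k)=\sum_{i=1}^k v_1\cdots v_{i-1}\,f(v_i)\,v_{i+1}\cdots v_k,\]
where the products on the right are taken in $C(V)$.

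The key step is to show that $D$ annihilates $I$, so that $D$ descends to an $F$-linear map $\delta:C(V)\to C(V)$. It suffices to check this on the generators of $I$ as a two-sided ideal, since for $x,y\in T(V)$ and a generator $i$ with $D(i)=0$, the derivation rule together with $\pi(i)=0$ gives
\[D(xiy)=\pi(x)\pi(i)D(y)+\pi(x)D(i)\pi(y)+D(x)\pi(iy)=0.\]
For a generator one computes
\[D(v\otimes v-q(v))=v\,f(v)+f(v)\,v-0=2vf(v)=0,\]
because $f(v)\in Z(C(V))$ and $\car F=2$. The descended map $\delta$ is then a derivation of $C(V)$: for $a=\pi(x)$ and $b=\pi(y)$,
\[\delta(ab)=D(xy)=\pi(x)D(y)+D(x)\pi(y)=a\,\delta(b)+\delta(a)\,b.\]

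Uniqueness is automatic: $C(V)$ is generated as an $F$-algebra by $V$, so if $\delta'$ is a second $F$-derivation extending $f$, then $\delta-\delta'$ is an $F$-derivation vanishing on a generating set, hence identically zero. The only real obstacle is the vanishing of $D$ on the defining ideal $I$, and this is exactly where both hypotheses enter in an essential way: centrality of $f(V)$ forces $vf(v)=f(v)v$, and characteristic $2$ then kills the sum. In characteristic different from $2$ one would instead need the stronger condition $vf(v)=0$, so the two assumptions are genuinely intertwined.
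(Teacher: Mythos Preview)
Your proof is correct and follows essentially the same route as the paper: define the Leibniz-type map on $T(V)$, check it kills the ideal generated by $v\otimes v-q(v)$ using centrality of $f(v)$ together with $\car F=2$, then descend to $C(V)$ and deduce uniqueness from the fact that $V$ generates $C(V)$. The only cosmetic difference is that you package the ideal computation as $D(xiy)=0$ in one line via the bimodule-derivation identity, whereas the paper verifies the left and right cases $g(w\otimes(v\otimes v-q(v)))=0$ and $g((v\otimes v-q(v))\otimes w)=0$ separately; the underlying computation is the same.
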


\begin{proof}
Let $T(V)$ be the tensor algebra of $V$ with the canonical map $\bar\ :T(V)\rightarrow C(V)$.
Let $g:T(V)\rightarrow C(V)$ be the linear map induced by $g(1)=0$ and
\begin{align}
g(u_1\otimes\cdots\otimes u_n)&=f(u_1)\overline{u_2}\cdots \overline{u_n}+\overline{u_1}f(u_2)\overline{u_3}\cdots \overline{u_n} \nonumber \\
&\quad +\cdots+\overline{u_1}\cdots \overline{u_{n-1}}f(u_n),
\end{align}
for $u_1,\cdots,u_n\in V$.
For every $w_1,w_2\in T(V)$ we have
\begin{align}\label{deri}
g(w_1\otimes w_2)=\overline{w_1}g(w_2)+g(w_1)\overline{w_2}.
\end{align}
Let $I$ be the ideal of $T(V)$ generated by the elements of the form $v\otimes v-q(v)$ for $v\in V$.
We claim that $g(I)=0$.
For every $w\in T(V)$ and $v\in V$ we have
\begin{align*}
g(w\otimes(v\otimes v-q(v)))&=\overline{w}g(v\otimes v-q(v))+g(w)(\overline{v\otimes v-q(v)})\\
&=\overline{w}g(v\otimes v-q(v))+0=\overline{w}(g(v\otimes v)-g(q(v)))\\
&=\overline{w}g(v\otimes v)=\overline{w}(\overline{v}f(v)+f(v)\overline{v})=0,
\end{align*}
where in the last equality we used the assumption $f(v)\in Z(C(V))$.
Similarly for every $w\in T(V)$ and $v\in V$ we have
$g((v\otimes v-q(v))\otimes w)=0$.
So $g(I)=0$ and $g$ can be factored through $C(V)$, i.e., $g$
induces a map $\delta:C(V)\rightarrow C(V)$.
By (\ref{deri}), for every $w_1,w_2\in C(V)$ we have $\delta(w_1w_2)=w_1\delta(w_2)+\delta(w_1)w_2$, so $\delta$ is a derivation.

The uniqueness of $\delta$ follows from the fact that $V$ generates $C(V)$ as an $F$-algebra.
\end{proof}

\begin{lem}\label{cliff}
Let $R$ be a finite dimensional algebra over a field $F$ of characteristic $2$ and let $n=r_F(R)$.
Then $R$ is a $\rho$-generated totally singular conic $F$-algebra if and only if there exists a totally singular quadratic form $(V,q)$ of dimension $n$ over $F$ such that $R\simeq C(V)$.
In addition $V$ can be chosen as the vector space generated by every generating subset $\{u_1,\cdots,u_n\}$ of $R$ {\rm(}as $F$-algebra{\rm)} with $q(v)=v^2\in F$ for every $v\in V$.
\end{lem}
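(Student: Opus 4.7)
For the ``if'' direction, suppose $(V,q)$ is totally singular of dimension $n$. In $C(V)$ the identity $uv + vu = \mathfrak{b}_q(u,v) = 0$ together with the defining relation $v^2 = q(v) \in F$ show that the elements of $V$ pairwise commute and square into $F$; since $V$ generates $C(V)$, it follows that $C(V)$ is commutative and every element squares into $F$, i.e., $C(V)$ is a totally singular conic algebra. The standard basis of $C(V)$ indexed by subsets of an $F$-basis of $V$ gives $\dim_F C(V) = 2^n$, and clearly $r_F(C(V)) \leq n$; combined with the elementary upper bound $\dim_F R' \leq 2^{r_F(R')}$ valid for every totally singular conic algebra $R'$ (which follows from the argument of Lemma \ref{frob}, each single-generated factor having dimension at most $2$), this forces $r_F(C(V)) = n$, and Proposition \ref{max} then yields that $C(V)$ is $\rho$-generated.

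For the ``only if'' direction and the additional claim, let $\{u_1,\ldots,u_n\}$ be any generating subset of $R$ of size $n = r_F(R)$. I would first check that the $u_i$ are $F$-linearly independent: any nontrivial relation $c_1 u_1 + \cdots + c_n u_n = 0$ would allow expressing some $u_j$ as an $F$-linear combination of the remaining $u_i$'s, making $R$ generated by fewer than $n$ elements and contradicting the minimality of $n$. Consequently $V := Fu_1 + \cdots + Fu_n$ has dimension $n$. Define $q \colon V \to F$ by $q(v) = v^2$; by the commutativity of $R$ (Remark \ref{local}$(i)$) and $\car F = 2$, the polarization $\mathfrak{b}_q(u,v) = (u+v)^2 - u^2 - v^2 = uv + vu$ vanishes on $V$, so $q$ is a totally singular quadratic form. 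Note that this construction uses only the data of a minimum generating set, so it simultaneously handles the ``in addition'' assertion.

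The inclusion $V \hookrightarrow R$ then extends via the universal property of the Clifford algebra to an $F$-algebra homomorphism $\psi \colon C(V) \to R$, whose image contains each $u_i$ and is therefore all of $R$. Since $\dim_F R = 2^n$ by Proposition \ref{max} and $\dim_F C(V) = 2^n$ by the basis count from the first part, $\psi$ must be an isomorphism. The step requiring the most care is the linking of these two dimension counts: without the equality $\dim_F C(V) = 2^n$ for a totally singular form of dimension $n$, the surjection $\psi$ could fail to be injective. Everything else is a direct application of either the universal property of the Clifford algebra or Proposition \ref{max}.
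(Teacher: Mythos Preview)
Your proof is correct and follows essentially the same route as the paper's: both directions use Proposition~\ref{max} to link $\rho$-generation with the dimension equality $\dim_F R = 2^{r_F(R)}$, and the ``only if'' direction proceeds via the universal property of $C(V)$ applied to the inclusion $V\hookrightarrow R$ together with a dimension count. You add a couple of details the paper leaves implicit---the linear independence of a minimal generating set (needed so that $\dim_F V = n$ and hence $\dim_F C(V)=2^n$) and the explicit upper bound $\dim_F R'\leq 2^{r_F(R')}$ used to pin down $r_F(C(V))=n$---but these are refinements of the same argument rather than a different approach.
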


\begin{proof}
First suppose that $R$ is a $\rho$-generated totally singular conic $F$-algebra.
Write $R=F[u_1,\cdots,u_n]$ for some $u_1,\cdots,u_n\in R$.
Set $\alpha_i=u_i^2\in F$, $i=1,\cdots,n$ and $V=Fu_1+\cdots+Fu_n\subseteq R$.
Define the map $q:V\rightarrow F$ via $q(v)=v^2\in F$.
For every $u,v\in V$ we have $q(u+v)-q(u)-q(v)=(u+v)^2-u^2-v^2=0$.
So $q$ is a totally singular quadratic form.
Consider the inclusion map $i:V\hookrightarrow R$.
The map $i$ is compatible with $q$ and can be extended to an $F$-algebra homomorphism $\phi:C(V)\rightarrow R$.
As $R=F[u_1,\cdots,u_n]$, $\phi$ is surjective.
Also using (\ref{max}), we have $\dim_FR=2^n=\dim_FC(V)$, so $\phi$ is an isomorphism.

Conversely suppose that $R\simeq C(V)$ for a totally singular quadratic form $(V,q)$ over $F$.
Let $\{v_1,\cdots,v_n\}$ be a basis of $V$ over $F$.
As $(V,q)$ is totally singular, $C(V)$ is a totally singular conic $F$-algebra.
Also as an $F$-algebra, $C(V)$ is generated by the set $\{v_1,\cdots,v_n\}$, i.e., $r_F(C(V))\leq n$.
Since $\dim_F C(V)=2^n$ we obtain $n=r_F(C(V))$.
So $\dim_F C(V)=2^{r_F(C(V))}$ and by (\ref{max}), $C(V)$ (and therefore $R$) is $\rho$-generated.
\end{proof}

\begin{cor}\label{new37}
Let $R$ and $R'$ be two finite dimensional $\rho$-generated totally singular conic algebras over a field $F$ of characteristic $2$ with respective maximal subfields $K$ and $K'$.
Then there exists an $F$-algebra isomorphism $R\simeq R'$ if and only if $\dim_FR=\dim_FR'$ and $K\simeq K'$ as $F$-algebras.
\end{cor}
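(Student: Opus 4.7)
The plan is to verify the two implications separately; both follow without essential difficulty from results already proved in this section.

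For the ``only if'' direction, any $F$-algebra isomorphism $\phi:R\to R'$ preserves $F$-dimension and sends the unique maximal ideal of $R$ (cf.\ (\ref{local})) to that of $R'$, inducing an $F$-algebra isomorphism between the residue fields $R/\mathfrak{m}\simeq R'/\mathfrak{m}'$. By (\ref{sub}$(i)$) each of these residue fields is isomorphic, as an $F$-algebra, to the corresponding maximal subfield, hence $K\simeq K'$.

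For the converse, assume $\dim_F R=\dim_F R'$ and $K\simeq K'$, and set $n:=r_F(R)$ and $r:=r_F(K)$. By (\ref{max}) we have $\dim_F R=2^n$ and $\dim_F R'=2^{r_F(R')}$, which forces $r_F(R')=n$; likewise $r_F(K')=r$ since minimum rank is preserved by $F$-algebra isomorphism. Invoking (\ref{gene}), pick $u_1,\dots,u_{n-r}\in\mathfrak{m}$ with $u_i^2=0$ such that $R=K[u_1,\dots,u_{n-r}]$, and analogous elements $u_1',\dots,u_{n-r}'$ for $R'$. The core step is then to identify $R$ with a standard tensor product depending only on $K$ and on the discrete parameter $n-r$. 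Consider the $F$-algebra
\[
A \; :=\; F[x_1,\dots,x_{n-r}]/(x_1^2,\dots,x_{n-r}^2) \;\simeq\; \textstyle\bigotimes_{i=1}^{n-r} F[x_i]/(x_i^2),
\]
of $F$-dimension $2^{n-r}$. Since $R$ is commutative (cf.\ (\ref{local})) and $u_i^2=0$, the assignments $k\otimes 1\mapsto k$ for $k\in K$ and $1\otimes x_i\mapsto u_i$ extend to a well-defined surjective $F$-algebra homomorphism $K\otimes_F A\to R$; comparing $F$-dimensions, $2^{r}\cdot 2^{n-r}=2^n=\dim_F R$, shows it is an isomorphism. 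The same argument produces $R'\simeq K'\otimes_F A$, and composing with the given isomorphism $K\simeq K'$ yields the desired $F$-algebra isomorphism $R\simeq R'$.

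I do not anticipate any genuine obstacle here; the whole argument is essentially a dimension count resting on (\ref{gene}), which supplies generators with $u_i^2=0$, and (\ref{max}), which controls the $F$-dimensions. The only delicate point is checking that the map $K\otimes_F A\to R$ is well defined, and this is immediate from the commutativity of $R$ together with $u_i^2=0$.
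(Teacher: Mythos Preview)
Your proof is correct and follows essentially the same approach as the paper's: both invoke (\ref{gene}) and (\ref{max}) to reduce to a dimension count showing $R$ is isomorphic to a standard object depending only on $K$ and the discrete parameter $n-r$. The paper phrases this standard object as the Clifford algebra over $K$ of the zero quadratic form on an $(n-r)$-dimensional space (invoking (\ref{cliff})), while you write it as $K\otimes_F A$ with $A=F[x_1,\dots,x_{n-r}]/(x_1^2,\dots,x_{n-r}^2)$; these are the same algebra.
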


\begin{proof}
First suppose that $\dim_FR=\dim_FR'$ and $K\simeq K'$.
By (\ref{max}) we have $\dim_FR=2^{r_F(R)}$ and $\dim_FR'=2^{r_F(R')}$.
So $\dim_FR=\dim_FR'=2^n$, where $n=r_F(R)=r_F(R')$.
Let $\mathfrak{m}$ and $\mathfrak{m}'$ be respectively the unique maximal ideals of $R$ and $R'$ mentioned in (\ref{local}) and set $r=r_F(R/\mathfrak{m})$.
As $K\simeq K'$, by (\ref{sub} $(i)$) we have $R/\mathfrak{m}\simeq R'/\mathfrak{m}'$ as $F$-algebras.
So $r_F(R'/\mathfrak{m}')=r$.
Also by (\ref{gene}) there exist $u_1,\cdots,u_{n-r}\in\mathfrak{m}$ and $u'_1,\cdots,u'_{n-r}\in\mathfrak{m}'$ such that
$R=K[u_1,\cdots,u_{n-r}]$ and $R'=K'[u'_1,\cdots,u'_{n-r}]$.
As $\dim_FR=2^n$ and $\dim_F K=2^r$, we have $\dim_K R=2^{n-r}$.
So the set $\{u_1,\cdots,u_{n-r}\}$ is linearly independent over $K$.
Set $V=Ku_1+\cdots+Ku_{n-r}$ and $V'=K'u'_1+\cdots+K'u'_{n-r}$.
Define the quadratic forms $q$ on $V$ and $q'$ on $V'$ via $q(v)=v^2$ and $q'(v')={v'}^2$.
Since $u_i\in\mathfrak{m}$ and $u'_i\in\mathfrak{m}'$, $i=1,\cdots,n-r$, $q$ and $q'$ are zero maps.
So $C(V)\simeq C(V')$ as $K$-algebras.
On the other hand by (\ref{cliff}) we have $R\simeq C(V)$ and $R'\simeq C(V')$ as $F$-algebras, so $R\simeq R'$ and we are done.
The converse is trivial.
\end{proof}

\begin{prop}\label{well}
Let $R$ be a finite dimensional totally singular conic algebra over a field $F$ of characteristic $2$ and let $n=r_F(R)$.
Consider a subset $\mathcal{B}:=\{u_1,\cdots,u_n\}\subseteq R$ which generates $R$ as an $F$-algebra.
Then the following statements are equivalent:
\begin{itemize}
\item[$(i)$] The algebra $R$ is $\rho$-generated.
\item[$(ii)$] Every map $f:\mathcal{B}\rightarrow R$ can be uniquely extended to an $F$-derivation $\delta:R\rightarrow R$.
\item[$(iii)$] Every map $f:\mathcal{B}\rightarrow A$ satisfying $f(u_i)^2=u_i^2\in F$, $i=1,\cdots,n$, can be uniquely extended to an $F$-algebra homomorphism $\phi:R\rightarrow A$.
\end{itemize}
\end{prop}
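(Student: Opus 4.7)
The plan is to prove the cycle $(i)\Rightarrow(iii)\Rightarrow(ii)\Rightarrow(i)$. Throughout, set $V=Fu_1+\cdots+Fu_n$ with the totally singular quadratic form $q(v)=v^2$, and let $\pi:C(V)\to R$ be the canonical surjective $F$-algebra homomorphism sending $u_i$ to $u_i$; its existence is the universal property of the Clifford algebra applied to the inclusion $V\hookrightarrow R$.

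For $(i)\Rightarrow(iii)$, I would invoke Lemma \ref{cliff} to identify $R$ with $C(V)$. Given $f:\mathcal{B}\to A$ with $f(u_i)^2=u_i^2\in F$, extend $f$ $F$-linearly to $\tilde f:V\to A$; the condition $f(u_i)^2\in F$, combined with the commutativity of the subalgebra generated by the $f(u_i)$ (which must match that of $R$ in order for an extension to exist), yields $\tilde f(v)^2=q(v)$ for every $v\in V$, so the universal property of $C(V)$ produces the required unique $F$-algebra homomorphism $\phi:R\to A$ extending $f$.

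For $(iii)\Rightarrow(ii)$, I would run a dual-numbers argument. Form the commutative $F$-algebra $A:=R[t]/(t^2)$, which remains totally singular conic. Given $f:\mathcal{B}\to R$, put $g(u_i):=u_i+f(u_i)t\in A$; since we are in characteristic $2$, $g(u_i)^2=u_i^2\in F$, so $(iii)$ furnishes a unique $F$-algebra homomorphism $\phi:R\to A$ extending $g$. Writing $\phi(x)=\phi_0(x)+\delta(x)t$ and expanding the identity $\phi(xy)=\phi(x)\phi(y)$ simultaneously forces $\phi_0=\id_R$ and $\delta:R\to R$ to be an $F$-derivation with $\delta(u_i)=f(u_i)$; the uniqueness of $\delta$ is inherited from that of $\phi$.

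The main obstacle is $(ii)\Rightarrow(i)$, where the rigid Clifford structure must be extracted from the soft data of derivation extensions. My key device is the family of \emph{formal partial derivatives} on $C(V)$. Since $q$ is totally singular, $C(V)$ is commutative with $F$-basis $\{u_S:=\prod_{i\in S}u_i\mid S\subseteq\{1,\ldots,n\}\}$, and for each $i$ the $F$-linear map $\partial_i:C(V)\to C(V)$ sending $u_S$ to $u_{S\setminus\{i\}}$ when $i\in S$ and to $0$ otherwise is an $F$-derivation of $C(V)$ (one verifies this by lifting to $F[x_1,\ldots,x_n]$, whose usual partial derivatives preserve the ideal $(x_1^2-u_1^2,\ldots,x_n^2-u_n^2)$ in characteristic $2$). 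For arbitrary $x_1,\ldots,x_n\in R$, the derivation $\delta$ supplied by $(ii)$ with $\delta(u_i)=x_i$ satisfies the Leibniz-rule formula
\[\delta(\pi(P))=\sum_{i=1}^n x_i\,\pi(\partial_i P)\qquad\text{for every }P\in C(V).\]
Specialising to $P\in\ker\pi$ and taking $x_j=1$ with all other $x_i=0$ yields $\pi(\partial_j P)=0$ for every $j$, so $\ker\pi$ is stable under each $\partial_i$. If $\ker\pi$ were nonzero, I would pick $P=\sum_S\alpha_S u_S\in\ker\pi$ with $\alpha_T\neq 0$ for some $T=\{i_1,\ldots,i_k\}$ of maximal cardinality; iterating $\partial_{i_1},\ldots,\partial_{i_k}$ deposits the nonzero scalar $\alpha_T\in F^{\times}$ inside $\ker\pi$, forcing $R=0$, a contradiction. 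Hence $\pi$ is an isomorphism, so $R\simeq C(V)$ and $\dim_FR=2^n$, and $R$ is $\rho$-generated by Proposition \ref{max}.
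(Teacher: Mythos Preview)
Your argument is correct, and it diverges from the paper's in two places worth noting.

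For the passage from $(iii)$ to $(ii)$, the paper does not prove $(iii)\Rightarrow(ii)$ at all; instead it closes the cycle by proving $(i)\Rightarrow(ii)$ directly via Lemma~\ref{clif}. Your dual-numbers trick with $A=R[t]/(t^2)$ is a standard and clean way to extract derivations from algebra homomorphisms, and it has the advantage of making the implication $(iii)\Rightarrow(ii)$ self-contained rather than routing it through $(i)$.

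The more substantial difference is in $(ii)\Rightarrow(i)$. The paper invokes the structure theory of \S3 (Lemma~\ref{sub}, Remark~\ref{gene}) to replace $\mathcal{B}$ by a \emph{new} generating set $\{v_1,\dots,v_n\}$ adapted to a maximal subfield $K$, with $v_1,\dots,v_{n-r}\in\mathfrak{m}$; it then shows $v_1\cdots v_{n-r}\neq 0$ by applying a suitable derivation to a minimal vanishing subproduct, giving $\ell\ell(R)\geq n-r+1$ directly. A delicate point there is that hypothesis~(ii) is stated for the fixed set $\mathcal{B}$, whereas the paper applies it to the new set $\{v_1,\dots,v_n\}$; this requires a word of justification that the paper omits. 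Your route avoids this issue entirely: you stay with the given $\mathcal{B}$, build the formal partial derivatives $\partial_i$ on $C(V)$ (well defined because $2x_i=0$), and use the identity $\delta(\pi(P))=\sum_i x_i\,\pi(\partial_iP)$ to show $\ker\pi$ is $\partial_i$-stable, hence zero. This is more elementary and yields $R\simeq C(V)$ without appealing to the maximal-subfield machinery, whereas the paper's approach gives direct control of the Loewy length. Both arguments are valid; yours is arguably tighter with respect to the stated hypothesis.
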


\begin{proof}
Let $V=Fu_1+\cdots+Fu_n$ and let $q:V\rightarrow F$ be the quadratic form defined by $q(v)=v^2$.
By (\ref{cliff}), $R$ is $\rho$-generated if and only if $R\simeq C(V)$.
So the equivalence $(i)\Leftrightarrow (iii)$ follows from the universal property of Clifford algebras and
the implication $(i)\Rightarrow (ii)$ follows from (\ref{clif}).

\noindent$(ii)\Rightarrow (i)$:
Let $K$ be a maximal subfield of $R$ and set $r=r_F(K)$.
Let $\mathfrak{m}$ be the unique maximal ideal of $R$ mentioned in (\ref{local}).
By (\ref{gene}) there exist $v_1,\cdots,v_{n-r}\in\mathfrak{m}$ such that $R=K[v_1,\cdots,v_{n-r}]$.
We claim that $v_{1}\cdots v_{n-r}\neq0$.
If $v_{1}\cdots v_{n-r}=0$,
then there exists a minimal number $l\leq n-r$ and $1\leq i_1<i_2<\cdots<i_l\leq n-r$ such that
\begin{align}\label{min}
v_{i_1}\cdots v_{i_l}=0.
\end{align}
Choose $v_{n-r+1},\cdots,v_n\in R$ such that $K=F[v_{n-r+1},\cdots,v_n]$, so
$\{v_1,\cdots,v_n\}$ generates $R$ as an $F$-algebra.
Let $f:\{v_1,\cdots,v_n\}\rightarrow R$ be the map defined by $f(v_{i_1})=1$ and $f(v_j)=0$ for every $j\neq i_1$.
By the hypothesis, $f$ can be extended to a derivation $\delta$ on $R$.
Note that $l\geq2$, because all $v_i$'s are nonzero.
Applying $\delta$ to (\ref{min}) we obtain $v_{i_2}\cdots v_{i_l}=0$, which contradicts the minimality of $l$.
So the claim is proved and we have $\mathfrak{m}^{n-r}\neq0$.
It follows that $\ell\ell(R)\geq n-r+1$ and thanks to (\ref{sub} $(iii)$), $R$ is a $\rho$-generated algebra.
\end{proof}

\section{Decomposability in terms of Frobenius subalgebras}
\begin{rems}\label{inv}
Let $(Q,\sigma)$ be a quaternion algebra with involution over a field $F$ of characteristic $2$.

\noindent $(i)$
There exists $u\in\sym(Q,\sigma)\setminus F$ such that $u^2\in F^\times$.
In fact if $\sigma$ is of symplectic type and $u\in\sym(Q,\sigma)$, then $u$ has trivial reduced trace (see \cite[pp. 25-26]{knus}),
so $u^2\in F$.
Replacing $u$ with $u+1$ we may assume that $u^2\in F^\times$.
If $\sigma$ is of orthogonal type, $\alt(Q,\sigma)$ is one dimensional and by \cite[(2.8 (2))]{knus} every nonzero element in $\alt(Q,\sigma)$ is invertible.
So for every $0\neq u\in\alt(Q,\sigma)$ we have $u^2=\nrd_Q(u)\in F^\times$, where $\nrd_Q(u)$ is the reduced norm of $u$ in $Q$.

\noindent $(ii)$ If $t$ is the transpose involution, $(Q,\sigma)\simeq(M_2(F),t)$ if and only if $\sigma$ is of orthogonal type and $\disc\sigma$ is trivial, see \cite[(7.4)]{knus}.
\end{rems}
The proof of the following result uses the method of that of
\cite[(3.7)]{pari} and \cite{parimala}\footnote{We are grateful to Professor M.-A. Knus for making \cite{parimala} available to us.}.
The technique was only applied for biquaternion algebras, but the main idea of the general case was present there.

\begin{lem}\label{pa}
Let $(A,\sigma)$ be a central simple algebra with involution over a field $F$ of characteristic $2$.
Suppose that there exists a totally singular conic subalgebra $S\subseteq\sym(A,\sigma)$ such that $C_A(S)=S$ and $\dim_FS=2^r$, where $r=r_F(S)$.
\begin{itemize}
\item[$(i)$] There exists a quaternion $F$-subalgebra $Q\subseteq A$ such that $\sigma(Q)=Q$.
\item[$(ii)$] If $S=F[u_1,\cdots,u_r]$, where $u_1,\cdots,u_r\in S$, then the quaternion subalgebra $Q$ in part {\rm($i$)} can be chosen so that $F[u_2,\cdots,u_r]\subseteq C_A(Q)$.
\item[$(iii)$] If $\sigma$ is of orthogonal type, then the quaternion subalgebra $Q$ in part {\rm($i$)} can be chosen so that $S\cap\alt(Q,\sigma|_{Q})$ has an invertible element.
\end{itemize}
\end{lem}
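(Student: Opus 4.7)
The plan is to follow Parimala's method \cite{parimala}, adapted to characteristic two via the tools developed in the preceding sections. First I would observe that by Proposition \ref{max}, $S$ is a $\rho$-generated Frobenius algebra, and since $S$ is commutative and self-centralizing, $\dim_F S=\deg_F A$; so Theorem \ref{223}$(ii)$ applies and every derivation of $S$ extends to an inner derivation of $A$.

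Next I would select $u_1\in S$ with $u_1^2\in F^\times$. For part (ii), given the generating set $\{u_1,\ldots,u_r\}$, if $u_1^2=0$ then replacing $u_1$ by $u_1+c$ for some $c\in F^\times$ yields $(u_1+c)^2=c^2\in F^\times$ while preserving $F[u_2,\ldots,u_r]$. For part (iii) in the orthogonal case I would additionally arrange $u_1\in\alt(A,\sigma)$, using Remark \ref{inv}$(i)$ together with the orthogonal hypothesis. By Proposition \ref{well}$(ii)$, the map $\delta(u_1)=u_1$, $\delta(u_i)=0$ for $i\geq 2$ extends uniquely to a derivation $\delta$ of $S$, which lifts to an inner derivation $\delta_w=[w,\cdot]$ on $A$ by Theorem \ref{223}$(ii)$. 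A Leibniz-rule computation shows $w^2+w$ commutes with every $u_i$, so $w^2+w\in C_A(S)=S$, and the vanishing of $\delta_w(w^2+w)$ forces $w^2+w\in F[u_2,\ldots,u_r]$ (the $\delta|_S$-kernel in the decomposition $S=F[u_2,\ldots,u_r]\oplus u_1\cdot F[u_2,\ldots,u_r]$). Writing $w^2+w=c+\theta$ with $c\in F$ and $\theta$ in the maximal ideal of $F[u_2,\ldots,u_r]$, the totally singular conic structure forces $\theta^2=0$; replacing $w$ by $w+\theta$ preserves the derivation relations and gives $(w+\theta)^2+(w+\theta)=c\in F$. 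The subalgebra $Q:=F\langle u_1,w\rangle$ then satisfies the characteristic-two quaternion relations $u_1^2\in F^\times$, $w^2+w\in F$, $u_1w+wu_1=u_1$, and $F[u_2,\ldots,u_r]\subseteq C_A(Q)$ by construction, yielding (ii) modulo $\sigma$-invariance.

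The hardest step will be the $\sigma$-invariance of $Q$. Since $\sigma|_S=\id$ and $\delta(S)\subseteq S\subseteq\sym(A,\sigma)$, the standard identity $\sigma\delta_w\sigma=\delta_{\sigma(w)}$ restricts on $S$ to $\delta_{\sigma(w)}|_S=\delta_w|_S$ (in characteristic two), so $s:=\sigma(w)+w\in C_A(S)=S$. For (i) I would need to arrange $s\in Q\cap S=F+Fu_1$; and for (iii), a direct calculation in $Q$ shows $u_1\in\alt(Q,\sigma|_Q)$ is equivalent to $s=0$. The main obstacle is that $s$ is invariant under the replacement $w\mapsto w+s'$ for $s'\in S$, so corrections cannot come from adjusting $w$ within its coset. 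Following \cite{parimala}, the approach is to exploit the $\sigma$-equivariance of the derivation-extension procedure: in the orthogonal case, choosing $u_1\in S\cap\alt(A,\sigma)$ and using the canonical symmetries of the totally singular conic algebra $S$ should force $s\in F+Fu_1$, and in fact $s=0$ for (iii), completing the proof.
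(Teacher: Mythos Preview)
Your outline has two genuine gaps. First, the step ``write $w^2+w=c+\theta$ with $c\in F$ and $\theta$ in the maximal ideal'' need not be possible: an element $x\in F[u_2,\ldots,u_r]$ decomposes this way only when $x^2\in F^2$, which you have no control over. The paper sidesteps this entirely by replacing $w$ with $\eta:=w^2$; then $\eta^2+\eta=(w^2+w)^2$ lies in $F$ automatically because $w^2+w\in S$ and $S$ is totally singular conic, and $\eta$ still induces $\delta$ on $S$ since $\delta^2=\delta$.

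Second, and more seriously, your plan for $\sigma$-invariance is not a proof. You correctly note that $s:=\sigma(w)+w\in S$ is unchanged by shifts $w\mapsto w+s'$ with $s'\in S$, so you cannot simply adjust $w$; but the hope that ``canonical symmetries should force $s\in F+Fu_1$'' (let alone $s=0$) has no content. The paper's trick is that after squaring, $\sigma(\eta)+\eta=(\xi+s)^2+\xi^2=\delta(s)+s^2$, with $s^2\in F$. Now split into two cases: if $\delta(s)\in F$ then $\sigma(\eta)+\eta\in F$ and $Q=F\langle u_1,\eta\rangle$ is $\sigma$-stable; if $\delta(s)\notin F$ then $\delta(s)$ itself satisfies $\eta\,\delta(s)+\delta(s)\,\eta=\delta^2(s)=\delta(s)$ and $\delta(s)^2\in F$, so one takes $Q=F\langle\delta(s),\eta\rangle$ instead, and the relation $\sigma(\eta)=\eta+\delta(s)+s^2$ makes this $\sigma$-stable. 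In both cases $F[u_2,\ldots,u_r]$ commutes with $\eta$, giving (ii). For (iii) your idea of choosing $u_1\in S\cap\alt(A,\sigma)$ at the outset is circular: nothing yet guarantees that $S$ meets $\alt(A,\sigma)$ nontrivially. The paper instead checks directly that in Case~1, $\eta u_1+\sigma(\eta u_1)=\eta u_1+u_1\eta=u_1$, so $u_1\in\alt(Q,\sigma|_Q)$; and in Case~2, $\delta(s)+s^2=\eta+\sigma(\eta)\in\alt(Q,\sigma|_Q)$.
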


\begin{proof}
$(i)$
By (\ref{frob}), $S$ is a Frobenius algebra.
Write $S=F[u_1,\cdots,u_r]$, where $u_i\in S$.
Replacing $u_i$ with $u_i+1$ if necessary, which doesn't change $F[u_2,\cdots,u_r]$ and $S$, we may assume that
\begin{align}\label{1}
0\neq u_i^2\in F,\ i=1,\cdots,r.
\end{align}
By (\ref{well}), there exists an $F$-derivation $\delta$ of $S$ induced by $\delta(u_1)=u_1$ and $\delta(u_i)=0$, $i=2,\cdots,r$.
By (\ref{223} $(ii)$), $\delta$ extends to an inner derivation $\delta_\xi$ of $A$ for some $\xi\in A$.
As $\delta^2=\delta$ on $S$, the element $\xi^2+\xi$ commutes with $S$, so
$\xi^2+\xi\in C_A(S)=S$.
Let $\eta=\xi^2\in A$.
We have
\begin{align}\label{2}
\eta^2+\eta=\xi^4+\xi^2=(\xi^2+\xi)^2\in F.
\end{align}
Since $\xi$ and $\xi^2$ induce the same derivation $\delta$ on $S$, we have
\begin{align}\label{v1}
\eta u_1+u_1\eta=\xi u_1+u_1\xi=\delta(u_1)=u_1.
\end{align}
For $x\in S$ the relations
\begin{align*}
\delta_{\sigma(\xi)}(x)=\sigma(\xi)x-x\sigma(\xi)=\sigma(\xi)\sigma(x)-\sigma(x)\sigma(\xi)=\sigma(x\xi-\xi x)=x\xi-\xi x,
\end{align*}
imply that the elements $\xi$ and $\sigma(\xi)$ also induce the same derivation $\delta$ on $S$.
Thus
$\sigma(\xi)=\xi+s$ for some $s\in S$
and
\begin{align}\label{a2}
\sigma(\eta)=\sigma(\xi)^2=(\xi+s)^2=\eta+\delta(s)+s^2,
\end{align}
with $\delta(s)\in S$, since $S$ is stable under $\delta$.
Also
\begin{align}\label{5}
s^2\in F,
\end{align}
since $S$ is totally singular conic algebra.
Now consider two cases.\\
Case $1$. If $\delta(s)\in F$, then (\ref{a2}) implies that $\eta+\sigma(\eta)\in F$.
By (\ref{1}), (\ref{2}) and (\ref{v1}), $u_1$ and $\eta$ generate a quaternion algebra $Q$, which is invariant under $\sigma$, because $\eta+\sigma(\eta)\in F$.\\
Case $2$.
If $\delta(s)\in S \setminus F$, we obtain
\begin{align}\label{6}
\delta(s)=\delta^2(s)=\delta_{\xi^2}(\delta(s))=\eta \delta(s)+\delta(s)\eta.
\end{align}
Then by (\ref{2}), (\ref{5}) and (\ref{6}), $\eta$ and $\delta(s)$ generate a quaternion subalgebra $Q$ of $A$, which is invariant under $\sigma$ thanks to (\ref{a2}).

\noindent$(ii)$ Since $\delta(u_2)=\cdots=\delta(u_r)=0$, in both cases above $F[u_2,\cdots,u_r]$ commutes with $\eta$, thus
$F[u_2,\cdots,$ $u_r]\subseteq C_A(Q)$.

\noindent$(iii)$ Suppose that $\sigma$ is of orthogonal type.
In case $1$ above,
as $\eta+\sigma(\eta)\in F$, we obtain $\eta=\sigma(\eta)$.
Therefore using (\ref{v1}) we get $\eta u_1+\sigma(\eta u_1)=\eta u_1+u_1\eta=u_1$, so $u_1\in S\cap\alt(Q,\sigma|_{Q})$.
Similarly in the case $2$,
 we have $\delta(s)+s^2\in S\cap\alt(Q,\sigma|_{Q})$ by (\ref{a2}).
So in both cases there exists a nonzero element in $S\cap\alt(Q,\sigma|_{Q})$ which is invertible by \cite[(2.8 (2))]{knus}.
\end{proof}

\begin{cor}\label{par}
Let $(A,\sigma)$ be a central simple algebra of degree $2^n$ with involution of orthogonal type over a field $F$ of characteristic $2$.
Suppose that there exists a totally singular conic subalgebra $S\subseteq\sym(A,\sigma)$ such that $C_A(S)=S$ and $\dim_FS=2^{r_F(S)}$.
Then for $i=1,\cdots,n$, there exists a $\sigma$-invariant quaternion algebra $Q_i\subseteq A$ and an invertible element $v_i\in S\cap\alt(Q_i,\sigma|_{Q_i})$ such that $(A,\sigma)\simeq\bigotimes_{i=1}^n(Q_i,\sigma|_{Q_i})$ and $S=F[v_1,\cdots,v_n]$.
In particular $\dim_FS=2^n$.
\end{cor}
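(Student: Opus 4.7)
The plan is to argue by induction on $n$. For the base case $n = 1$, $A$ is itself a quaternion $F$-algebra, and I apply Lemma~\ref{pa}(iii) with $Q_1 := A$ (automatically $\sigma$-invariant) to obtain an invertible $v_1 \in S \cap \alt(A, \sigma)$. Orthogonality of $\sigma$ gives $1 \notin \alt(A, \sigma)$, hence $v_1 \notin F$, so $F[v_1]$ is $2$-dimensional; since $S$ is commutative with $C_A(S) = S$ in the degree-$2$ algebra $A$, the inequality $\dim_F S \leqslant 2$ forces $\dim_F S = 2$ and $S = F[v_1]$.

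For the inductive step, I fix a minimal generating set $u_1, \ldots, u_r$ of $S$ (where $r = r_F(S)$) and apply Lemma~\ref{pa}, exploiting the fact that the single quaternion subalgebra $Q$ produced in its proof simultaneously realizes conclusions (ii) and (iii). This yields a $\sigma$-invariant quaternion subalgebra $Q_1 \subseteq A$, an invertible element $v_1 \in S \cap Q_1 \cap \alt(Q_1, \sigma|_{Q_1})$, and the inclusion $F[u_2, \ldots, u_r] \subseteq C_A(Q_1)$. Inspection of the two cases in the proof of Lemma~\ref{pa}(iii) shows $v_1 \notin F$ (in case~1, $v_1 = u_1 \notin F$; in case~2, $v_1 = \delta(s) + s^2$ with $\delta(s) \notin F$ and $s^2 \in F$), whence $\alt(Q_1, \sigma|_{Q_1}) \not\subseteq F$ and $\sigma|_{Q_1}$ is of orthogonal type. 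By the double centralizer theorem, $A = Q_1 \otimes_F A'$ with $A' := C_A(Q_1)$ a central simple $F$-algebra of degree $2^{n-1}$; set $\sigma' := \sigma|_{A'}$. The characteristic-$2$ theory of types of involutions under tensor decompositions—combined with $\sigma$ and $\sigma|_{Q_1}$ both being orthogonal—yields that $\sigma'$ is also of orthogonal type.

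I then take $S' := F[u_2, \ldots, u_r] \subseteq A'$ and verify the hypotheses of the corollary for $(A', \sigma', S')$. $S'$ is totally singular conic and contained in $\sym(A', \sigma')$ by inheritance. The key dimensional computation uses the $F$-derivation $\delta$ of $S$ built in the proof of Lemma~\ref{pa} (with $\delta(u_1) = u_1$ and $\delta(u_i) = 0$ for $i \geqslant 2$), which is idempotent on $S$ and whose kernel is exactly $S'$; moreover $\delta$ coincides on $S$ with the inner derivation $\delta_\eta$, so $S \cap C_A(\eta) = \ker(\delta) = S'$ and hence $S \cap A' = S'$. Idempotence yields the direct-sum decomposition $S = S' \oplus u_1 S'$, giving $\dim_F S' = 2^{r-1} = 2^{r_F(S')}$ (from $r_F(S') \leqslant r - 1$ and the general bound $\dim_F S' \leqslant 2^{r_F(S')}$). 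For self-centralization, any $x \in A'$ commuting with $S'$ automatically commutes with $v_1 \in Q_1$ (as $A' = C_A(Q_1)$), hence with $S = F[v_1] \cdot S'$ (case~1 of Lemma~\ref{pa} is immediate; case~2 requires a brief verification that $F[v_1] \cdot S'$ recaptures $u_1 S'$), placing $x \in C_A(S) \cap A' = S \cap A' = S'$. Applying the inductive hypothesis to $(A', \sigma', S')$ produces $\sigma'$-invariant quaternion subalgebras $Q_2, \ldots, Q_n \subseteq A'$ with invertible $v_i \in S' \cap \alt(Q_i, \sigma'|_{Q_i})$, together with $(A', \sigma') \simeq \bigotimes_{i=2}^n (Q_i, \sigma'|_{Q_i})$ and $S' = F[v_2, \ldots, v_n]$. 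Assembling yields $(A, \sigma) \simeq \bigotimes_{i=1}^n (Q_i, \sigma|_{Q_i})$ and $S = F[v_1, v_2, \ldots, v_n]$ of dimension $2 \cdot 2^{n-1} = 2^n$.

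The principal technical obstacle is establishing that $\sigma'$ on the centralizer $A'$ is of orthogonal type, which relies on the characteristic-$2$ analysis of how involution types behave under tensor decompositions (given $\sigma$ and $\sigma|_{Q_1}$ both orthogonal). A secondary concern is aligning the generating set $\{u_1, \ldots, u_r\}$ with the invertible alternating element $v_1$ from Lemma~\ref{pa}(iii) in case~2 of that proof (where $v_1 \neq u_1$); this requires verifying that $F[v_1] \cdot S' = S$, which in turn follows from $v_1$ lying in $Q_1 \setminus F$ and the direct-sum structure $S = S' \oplus u_1 S'$.
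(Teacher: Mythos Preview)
Your inductive framework matches the paper's, but your argument has a genuine gap in case~2 of Lemma~\ref{pa}. You take $v_1 = \delta(s) + s^2$ and need $F[v_1]\cdot S' = S$ (both for self-centralisation of $S'$ in $A'$ and, at the end, for $S = F[v_1,\ldots,v_n]$). Writing $s = s_0 + u_1 s_1$ in the decomposition $S = S' \oplus u_1 S'$ gives $\delta(s) = u_1 s_1$, so $v_1 = u_1 s_1 + s^2$ with $s^2 \in F$; hence $F[v_1]\cdot S' = S' + u_1 s_1 S'$, which equals $S$ only when $s_1$ is a unit in $S'$. Nothing forces this: $\delta(s)\notin F$ gives only $s_1 \neq 0$, and invertibility of $v_1$ in $A$ gives only $u_1^2 s_1^2 + s^4 \neq 0$, which is compatible with $s_1^2 = 0$ provided $s^2\neq 0$. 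When $S'$ is not a field your ``brief verification'' can therefore fail.

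The paper sidesteps both uses of this claim. For $C_B(S') = S'$ it argues by a dimension count in the tensor decomposition $A \simeq Q_1 \otimes_F B$, rather than by routing through $v_1$. For $S = F[v_1,\ldots,v_n]$ it does not thread the $v_i$ through the induction at all: it first runs the induction (using only parts~(i) and~(ii) of Lemma~\ref{pa}) to obtain the full quaternion decomposition, then collects an invertible $v_i\in S\cap\alt(Q_i,\sigma|_{Q_i})$ at each stage via part~(iii), and finally observes that $S'' := F[v_1,\ldots,v_n]\subseteq S$ has dimension $2^n = \deg_F A$ (the $v_i$ sit in distinct tensor factors) and is Frobenius by~(\ref{frob}), whence $C_A(S'') = S''$ by Theorem~\ref{223}(i); from $S'' \subseteq S = C_A(S) \subseteq C_A(S'') = S''$ one concludes $S = S''$ without ever comparing $F[v_1]$ to $F[u_1]$.
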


\begin{proof}
Set $r=r_F(S)$.
Write $S=F[u_1,\cdots,u_r]$ for some $u_1,\cdots,u_r\in S$ and set $S'=F[u_2,\cdots,u_r]$.
By (\ref{pa} $(i)$) there exists a quaternion subalgebra $Q_1$ of $A$ such that $(A,\sigma)\simeq(Q_1,\sigma|_{Q_1})\otimes(B,\tau)$, where $B=C_A(Q_1)$ and $\tau=\sigma|_B$.
Also by (\ref{pa} $(ii)$), $Q_1$ can be chosen such that $S'\subseteq C_A(Q_1)=B$.
We have
\[F[u_1]\otimes S'\simeq S=C_A(S)\simeq C_{Q_1}(F[u_1])\otimes C_B(S')\simeq F[u_1]\otimes C_B(S'),\]
so $\dim_FC_B(S')=\dim_FS'$.
As $S'$ is commutative we obtain $C_B(S')=S'$.
Since $\dim_FS'=2^{r_F(S')}$, the induction can be applied to $B$ and we obtain $(A,\sigma)\simeq\bigotimes_{i=1}^n(Q_i,\sigma_i)$
where $(Q_i,\sigma_i)$ is a quaternion algebra over $F$ with involution.

By (\ref{pa} $(iii)$) for every $i$ there exists an invertible element $v_i\in S\cap\alt(Q_i,\sigma|_{Q_i})$.
We claim that $S=F[v_1,\cdots,v_n]$.
Set $S''=F[v_1,\cdots,v_n]\subseteq S$.
Then $\dim_FS''=2^n=2^{r_F(S'')}$, so by (\ref{frob}) and (\ref{223} $(i)$), $S''$ is a Frobenius algebra and $C_A(S'')=S''$.
As $S''\subseteq S$ and $C_A(S)=S$, we have $S''=S$.
\end{proof}

\begin{remark}\label{con}
The converse of (\ref{par}) is trivially true.
In fact let $(A,\sigma)=\bigotimes_{i=1}^n(Q_i,\sigma_i)$ be a totally decomposable algebra with involution over a field $F$ of characteristic $2$.
By (\ref{inv}) there exists $v_i\in\sym(Q_i,\sigma_i)$ such that $v_i^2\in F^\times$.
Set $S=F[v_1,\cdots,v_n]\subseteq \sym(A,\sigma)$.
Then $S$ is a totally singular conic $F$-algebra and $\dim_FS=2^n=2^{r_F(S)}$.
By (\ref{frob}) and (\ref{223} $(i)$), we have $C_A(S)=S$.
\end{remark}

\begin{thm}\label{der}
Let $(A,\sigma)$ be a central simple algebra of degree $2^n$ with involution over a field $F$ of characteristic $2$. Then the following statements are equivalent:
\begin{itemize}
\item[$(i)$] $(A,\sigma)$ is totally decomposable.
\item[$(ii)$] There exists a $\rho$-generated totally singular conic $F$-algebra $S\subseteq \sym(A,\sigma)$ such that $C_A(S)=S$.
 \end{itemize}
Furthermore if $\sigma$ is of orthogonal type then for every $F$-algebra $S$ satisfying $(ii)$, we have necessarily $S\subseteq \alt(A,\sigma)+F$.
More precisely
there exist $v_1,\cdots,v_n\in\alt(A,\sigma)\cap A^*$
such that $S=F[v_1,\cdots,v_n]$ and $v_{i_1}\cdots v_{i_l}\in\alt(A,\sigma)$ for every $1\leq i_1<\cdots<i_l\leq n$ and $1\leq l\leq n$.
Thus, if $\sigma$ is of orthogonal type the statement $(ii)$ can be replaced by the following:
\begin{itemize}
\item[(iii)] There exists a $\rho$-generated totally singular conic $F$-algebra $S\subseteq \alt(A,\sigma)+F$ such that $C_A(S)=S$.
 \end{itemize}
Finally if $S$ is any subalgebra of $A$ satisfying the condition $(ii)$ or $(iii)$ then $r_F(S)=n$ and $\dim_FS$ $=\deg_FA=2^n$.
\end{thm}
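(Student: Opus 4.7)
The plan is to prove (i) $\Leftrightarrow$ (ii) by iteratively splitting off quaternion subalgebras, then deduce the orthogonal refinements by sharpening the splitting step with Lemma \ref{pa}(iii). The direction (i) $\Rightarrow$ (ii) is immediate from Remark \ref{con}: given $(A,\sigma) = \bigotimes_{i=1}^n (Q_i, \sigma_i)$, choose $v_i \in \sym(Q_i, \sigma_i)$ with $v_i^2 \in F^\times$ by Remark \ref{inv}(i), set $S = F[v_1, \ldots, v_n] \subseteq \sym(A, \sigma)$, and invoke Proposition \ref{max}, Lemma \ref{frob}, and Theorem \ref{223}(i) to verify that $S$ is $\rho$-generated and self-centralizing.

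For (ii) $\Rightarrow$ (i) I will induct on $r = r_F(S)$. The base $r = 0$ forces $S = F$ and hence $A = C_A(S) = F$, so $n = 0$. For $r \geq 1$, pick a minimal generating tuple $u_1, \ldots, u_r$ for $S$. Lemma \ref{pa}(i), (ii) supplies a $\sigma$-invariant quaternion subalgebra $Q_1 \subseteq A$ with $F[u_1] \subseteq Q_1$ and $S' := F[u_2, \ldots, u_r] \subseteq B := C_A(Q_1)$; by the double centralizer theorem $(A, \sigma) \simeq (Q_1, \sigma|_{Q_1}) \otimes (B, \sigma|_B)$ and $\deg B = 2^{n-1}$. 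A dimension count from $\dim_F S = 2^r$ forces $S = F[u_1] \otimes_F S'$, so $\dim_F S' = 2^{r-1}$ and $S'$ is still $\rho$-generated. Using $C_{Q_1}(F[u_1]) = F[u_1]$ I obtain $C_A(F[u_1]) = F[u_1] \otimes_F B$; intersecting with $C_A(S')$ yields $C_A(S) = F[u_1] \otimes_F C_B(S')$, and comparing with $C_A(S) = S = F[u_1] \otimes_F S'$ gives $C_B(S') = S'$. The inductive hypothesis decomposes $(B, \sigma|_B)$ as a tensor product of $r-1$ quaternion algebras with involution, giving (i). Since the iteration lowers both $r$ and $n$ by one at each stage, the base case forces $r = n$, which establishes the final-sentence claims $r_F(S) = n$ and $\dim_F S = 2^n$.

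For the orthogonal-type ``furthermore'' I rerun the induction using Lemma \ref{pa}(iii) at each step to obtain an invertible $v_i \in S \cap \alt(Q_i, \sigma|_{Q_i})$. As in Corollary \ref{par}, the subalgebra $F[v_1, \ldots, v_n] \subseteq S$ is Frobenius of dimension $2^n$ and so equals $S$ by Theorem \ref{223}(i). The claim $v_{i_1} \cdots v_{i_l} \in \alt(A, \sigma)$ I prove by induction on $l$: writing $v_{i_1} = w + \sigma(w)$ with $w \in Q_{i_1}$ and setting $a = w v_{i_2} \cdots v_{i_l}$, the pairwise commutativity of the $Q_j$'s yields $\sigma(a) = v_{i_2} \cdots v_{i_l} \sigma(w) = \sigma(w) v_{i_2} \cdots v_{i_l}$, whence $a + \sigma(a) = v_{i_1} v_{i_2} \cdots v_{i_l}$. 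Since $S = F[v_1] \otimes \cdots \otimes F[v_n]$ has an $F$-basis consisting of $1$ together with the products $v_{i_1} \cdots v_{i_l}$, each of which is alternating, $S \subseteq F + \alt(A, \sigma)$. The equivalence (ii) $\Leftrightarrow$ (iii) is then immediate since $\alt(A, \sigma) \subseteq \sym(A, \sigma)$ in characteristic $2$. The principal obstacle is the tensor-factor bookkeeping in the inductive step---establishing $S = F[u_1] \otimes_F S'$ and $C_A(F[u_1]) = F[u_1] \otimes_F B$ simultaneously and combining them to deduce $C_B(S') = S'$ without prior knowledge of the final dimension---while the alternating-product induction forms a secondary delicate point, depending on the pairwise commutativity of the $Q_j$'s to carry $\sigma$ past the trailing $v_{i_j}$'s.
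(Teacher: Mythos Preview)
Your argument is essentially the paper's own: (i)$\Rightarrow$(ii) is Remark~\ref{con}, (ii)$\Rightarrow$(i) is the induction of Corollary~\ref{par} via Lemma~\ref{pa}, the orthogonal refinement comes from Lemma~\ref{pa}(iii) exactly as in Corollary~\ref{par}, and your alternating-product computation is verbatim the paper's one-line calculation (the ``induction on $l$'' is not actually used---the argument works directly for every $l$).

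One small point worth flagging: Lemma~\ref{pa}(i),(ii) do not guarantee $F[u_1]\subseteq Q_1$. In Case~2 of that lemma's proof the quaternion $Q_1$ is generated by $\eta$ and $\delta(s)$, and $u_1$ need not lie in it; so your identity $C_A(F[u_1])=F[u_1]\otimes_F B$ is not immediate. The paper's own display in Corollary~\ref{par}, which writes $C_{Q_1}(F[u_1])$, makes the same implicit leap. Both are repaired by observing that in either case $Q_1\cap S$ contains an invertible element $w$ (namely $u_1$ in Case~1 and $\delta(s)$ in Case~2) with $S=F[w]\cdot S'$, after which your tensor bookkeeping goes through with $w$ in place of $u_1$.
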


\begin{proof}
The equivalence $(i)\Leftrightarrow(ii)$ follows from (\ref{max}), (\ref{par}) and (\ref{con}).

If $\sigma$ is of orthogonal type, then by (\ref{par}) for every $i$ there exists an invertible element $v_i\in S\cap\alt(Q_i,\sigma_i)$ such that $S=F[v_1,\cdots,v_n]$.
Choose an element $w_i\in Q_i$ such that $v_i=w_i-\sigma(w_i)$, $i=1,\cdots,n$.
Then $v_{i_1}\cdots v_{i_l}=w_{i_1}v_{i_2}\cdots v_{i_l}-\sigma(w_{i_1}v_{i_2}\cdots v_{i_l})\in\alt(A,\sigma)$ for every $1\leq i_1<\cdots<i_l\leq n$ and $1\leq l\leq n$.

The last statement of the result follows from (\ref{par}).
\end{proof}

\section{A new description of the Pfister invariant}
\begin{defi}
Let $(A,\sigma)$ be a totally decomposable algebra with involution of orthogonal type over a field $F$ of characteristic $2$.
By (\ref{der} $(ii)$) there exists a $\rho$-generated totally singular conic algebra $S\subseteq \sym(A,\sigma)$ such that $C_A(S)=S$.
By (\ref{der} $(iii)$) we have necessarily $S\subseteq\alt(A,\sigma)+F$ and there exist $v_1,\cdots,v_n\in\alt(A,\sigma)\cap A^*$
such that $S=F[v_1,\cdots,v_n]$ and $v_{i_1}\cdots v_{i_l}\in\alt(A,\sigma)$ for $1\leq i_1<\cdots<i_l\leq n$ and $1\leq l\leq n$.
We call the set $\{v_1,\cdots,v_n\}$, a {\it set of alternating generators} of $S$.
\end{defi}

\begin{defi}\label{bil}
Let $(A,\sigma)$ be a totally decomposable algebra of degree $2^n$ with involution of orthogonal type over a field $F$ of characteristic $2$ and let $S\subseteq \sym(A,\sigma)$ be a $\rho$-generated totally singular conic algebra such that $C_A(S)=S$.
Define a map $\mathfrak{s}:S\times S\rightarrow F$ as follows:
for every $v,w\in S$, as $vw\in S\subseteq\alt(A,\sigma)+F$, there exists a unique $\alpha\in F$ such that $vw+\alpha\in \alt(A,\sigma)$.
Set $\mathfrak{s}(v,w)=\alpha$.
It is easy to see that $\mathfrak{s}$ is a symmetric bilinear form on $S$.
Note that for every $v\in S$, as $v^2\in F$ and $F\cap\alt(A,\sigma)=\{0\}$, we obtain
\begin{align}\label{star}
\mathfrak{s}(v,v)=v^2\in F.
\end{align}
Furthermore for every $u,v,w\in S$ and $\alpha\in F$, we have $u(vw)+\alpha\in \alt(A,\sigma)$ if and only if $(uv)w+\alpha\in \alt(A,\sigma)$, so $\mathfrak{s}(u,vw)=\mathfrak{s}(uv,w)$, i.e., $\mathfrak{s}$ is an {\it associative} bilinear form.
We also have the orthogonal decomposition $S=F\perp S_0$ with respect to $\mathfrak{s}$, where $S_0=S\cap\alt(A,\sigma)$.
\end{defi}

\begin{rem}\label{ax}
Let $(A,\sigma)$ be a totally decomposable algebra of degree $2^n$ with involution of orthogonal type over a field $F$ of characteristic $2$ and let $S\subseteq \sym(A,\sigma)$ be a $\rho$-generated totally singular conic algebra such that $C_A(S)=S$.
If $\{v_1,\cdots,v_n\}$ is a set of alternating generators of $S$ with $v_i^2=\alpha_i\in F^\times$ and $\mathfrak{s}$ is the bilinear form defined in $(\ref{bil})$, then
$\mathfrak{s}\simeq\lla\alpha_1,\cdots,\alpha_n\rra$; in particular $\mathfrak{s}$ is nondegenerate.
\end{rem}

\begin{defi}
Let $(A,\sigma)\simeq \bigotimes_{i=1}^n(Q_i,\sigma_i)$ be a totally decomposable algebra with involution of orthogonal type over a field $F$ of characteristic $2$ and let $\alpha_i\in F^{\times}$, $i=1,\cdots,n$, be a representative of the class $\disc\sigma_i\in F^{\times}/F^{\times2}$.
In \cite{dolphin3} it is shown that the bilinear $n$-fold Pfister form $\mathfrak{Pf}(A,\sigma):=\lla\alpha_1,\cdots,\alpha_n\rra$ is independent of the decomposition of $(A,\sigma)$.
As in \cite{dolphin3} we call this form the {\it Pfister invariant} of $(A,\sigma)$.
\end{defi}

The following result gives another description of $\mathfrak{Pf}(A,\sigma)$:
\begin{lem}\label{lp}
Let $(A,\sigma)$ be a totally decomposable algebra with involution of orthogonal type over a field $F$ of characteristic $2$ and let $S\subseteq \sym(A,\sigma)$ be a $\rho$-generated totally singular conic algebra such that $C_A(S)=S$.
Then the bilinear form $\mathfrak{s}$ on $S$ defined in $(\ref{bil})$ is isometric to $\mathfrak{Pf}(A,\sigma)$.
\end{lem}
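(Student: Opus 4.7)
The plan is short: combine Corollary \ref{par}, Remark \ref{ax}, and the decomposition-independence of $\mathfrak{Pf}(A,\sigma)$.

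First I would invoke Corollary \ref{par} to obtain a decomposition $(A,\sigma) \simeq \bigotimes_{i=1}^n (Q_i, \sigma_i)$ with $\sigma_i = \sigma|_{Q_i}$, together with invertible elements $v_i \in S \cap \alt(Q_i, \sigma_i)$ such that $\{v_1, \ldots, v_n\}$ is a set of alternating generators of $S$ and $\dim_F S = 2^n$. By (\ref{max}) we have $r_F(S) = n$, so $\{v_1, \ldots, v_n\}$ is a minimal generating set of $S$; in particular no $v_i$ can be dropped, and consequently $v_i \notin F$ for each $i$.

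Next I would deduce that each $\sigma_i$ is of orthogonal type. In characteristic two the only symplectic involution on a quaternion algebra is the canonical (reduced-trace) involution $\gamma$, and a direct computation (as already seen in the split case $M_2(F)$) gives $\alt(Q_i, \gamma) = F\cdot 1$. Hence $v_i \in \alt(Q_i, \sigma_i) \setminus F$ rules out the symplectic case, and $\sigma_i$ is orthogonal. By Remark \ref{inv}$(i)$ we then have $v_i^2 = \nrd_{Q_i}(v_i) \in F^\times$, and this scalar is a representative of $\disc \sigma_i$ in $F^\times / F^{\times 2}$, the standard description of the discriminant of an orthogonal involution on a quaternion algebra in characteristic two (see \cite[\S 7]{knus}).

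Finally, applying Remark \ref{ax} to the generators $v_1, \ldots, v_n$ yields
\[\mathfrak{s} \simeq \lla v_1^2, \ldots, v_n^2 \rra,\]
and by the preceding identification of $v_i^2$ with a representative of $\disc\sigma_i$, together with the decomposition-independence of $\mathfrak{Pf}(A,\sigma)$, this coincides with $\lla \alpha_1, \ldots, \alpha_n \rra = \mathfrak{Pf}(A,\sigma)$, completing the proof. The only nontrivial external input I rely on is the identification $v_i^2 \equiv \disc \sigma_i \pmod{F^{\times 2}}$ for orthogonal involutions on quaternion algebras in characteristic two; everything else is a direct application of results already established in the paper.
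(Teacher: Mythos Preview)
Your proof is correct and follows essentially the same route as the paper's: invoke Corollary~\ref{par} to obtain the decomposition with $v_i\in S\cap\alt(Q_i,\sigma_i)$, read off $\mathfrak{s}\simeq\lla v_1^2,\ldots,v_n^2\rra$ via Remark~\ref{ax}, and identify $v_i^2$ with $\nrd_{Q_i}(v_i)$, a representative of $\disc\sigma_i$. The only difference is that you insert an explicit justification that each $\sigma_i$ is orthogonal (via $v_i\notin F$ and $\alt(Q_i,\gamma)=F$ for the canonical involution), a point the paper leaves implicit; this is a welcome clarification but not a different argument.
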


\begin{proof}
By (\ref{par}) there exists a $\sigma$-invariant quaternion algebra $Q_i\subseteq A$, $i=1,\cdots$ $,n$ with $(A,\sigma)\simeq\bigotimes_{i=1}^n(Q_i,\sigma|_{Q_i})$ and an invertible element $v_i\in\alt(Q_i,\sigma|_{Q_i})$ such that $S=F[v_1,\cdots,v_n]$ and $v_i^2=\alpha_i\in F^\times$, $i=1,\cdots,n$.
Then $\{v_1,\cdots,v_n\}$ is set of alternating generators of $S$ and $\mathfrak{s}\simeq\lla\alpha_1,\cdots,\alpha_n\rra$.
We also have $\disc\sigma_i$$=\nrd_{Q_i}(v_i)F^{\times2}=\alpha_i F^{\times2}\in F^\times/F^{\times2}$,
so $\mathfrak{Pf}(A,\sigma)\simeq\lla\alpha_1,\cdots,\alpha_n\rra\simeq\mathfrak{s}$.
\end{proof}

Let $\alpha_1,\cdots,\alpha_n,\beta_1,\cdots,\beta_n\in F^\times$.
The bilinear Pfister forms $\lla\alpha_1,\cdots,\alpha_n\rra$ and $\lla\beta_1,\cdots,\beta_n\rra$
are said to be {\it simply P-equivalent} if either $n=1$ and $\alpha_1F^{\times2}=\beta_1 F^{\times2}$ or
$n\geq2$ and there exist $1\leq i<j\leq n$ such that
$\lla\alpha_i,\alpha_j\rra\simeq\lla\beta_i,\beta_j\rra$ and $\alpha_k=\beta_k$ for $k\neq i,j$.
We say that two bilinear Pfister forms ${\mathfrak b}$ and ${\mathfrak c}$ are {\it chain P-equivalent}, if there exist
bilinear Pfister forms ${\mathfrak b}_0,\cdots,{\mathfrak b}_m$ such that ${\mathfrak b}={\mathfrak b}_0$, ${\mathfrak c}={\mathfrak b}_m$ and for every $i=0,\cdots,m-1$,
${\mathfrak b}_i$ and ${\mathfrak b}_{i+1}$ are simply P-equivalent.

\begin{prop}\label{gen}
Let $(A,\sigma)$ be a totally decomposable algebra of degree $2^n$ with involution of orthogonal type over a field $F$ of characteristic $2$ and let $S\subseteq \sym(A,\sigma)$ be a $\rho$-generated totally singular conic algebra such that $C_A(S)=S$.
Let $\mathfrak{s}$ be the bilinear form on $S$ defined in \rm{(\ref{bil})}.
If $\mathfrak{s}\simeq\lla\alpha_1,\cdots,\alpha_n\rra$ for some $\alpha_1,\cdots,\alpha_n\in F^\times$,
then there exists a set of alternating generators $\{u_1,\cdots,u_n\}$ of $S$ such that $u_i^2=\alpha_i$, $i=1,\cdots,n$.
\end{prop}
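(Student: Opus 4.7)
\emph{Plan.} I would start from the alternating generators $v_1,\ldots,v_n$ of $S$ furnished by (\ref{par}), with $v_k^2=\beta_k\in F^\times$; in the basis $\{v_I\}_{I\subseteq\{1,\ldots,n\}}$ of $S$, the form $\mathfrak{s}$ is diagonal with entries $\prod_{k\in I}\beta_k$, so $\mathfrak{s}\simeq\lla\beta_1,\ldots,\beta_n\rra$ by (\ref{ax}). The hypothesis therefore translates into an isometry $\lla\beta_1,\ldots,\beta_n\rra\simeq\lla\alpha_1,\ldots,\alpha_n\rra$ of bilinear Pfister forms, and the task becomes to propagate this to the level of generators. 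The natural tool is the chain $P$-equivalence theorem for bilinear Pfister forms in characteristic $2$ (see \cite[Ch.~II]{elman}), which lets me reduce to the simply $P$-equivalent case.

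\emph{Key step.} By induction on the chain length (the base $n=1$ being handled by rescaling $v_1$ by a square root of $\alpha_1\beta_1^{-1}$, and a chain of length zero being trivial), it suffices to treat the following: alternating generators $w_1,\ldots,w_n$ of $S$ with $w_k^2=\gamma_k$ are already available, and indices $i<j$ are given with $\lla\gamma_i,\gamma_j\rra\simeq\lla\alpha_i,\alpha_j\rra$ and $\gamma_k=\alpha_k$ for $k\notin\{i,j\}$. I then work inside $T=F[w_i,w_j]\subseteq S$. Using associativity of $\mathfrak{s}$ together with $w_i,w_j,w_iw_j\in\alt(A,\sigma)$, the set $\{1,w_i,w_j,w_iw_j\}$ is an $\mathfrak{s}$-orthogonal basis of $T$ with Gram matrix $\mathrm{diag}(1,\gamma_i,\gamma_j,\gamma_i\gamma_j)$, so $\mathfrak{s}|_T\simeq\lla\gamma_i,\gamma_j\rra\simeq\lla\alpha_i,\alpha_j\rra$. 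Since the orthogonal complement of $F\cdot 1$ in $T$ is exactly $T\cap\alt(A,\sigma)$, the uniqueness of the pure subform of a bilinear Pfister form produces an $\mathfrak{s}$-orthogonal basis $\{u_i,u_j,z\}$ of $T\cap\alt(A,\sigma)$ with $u_i^2=\alpha_i$, $u_j^2=\alpha_j$, $z^2=\alpha_i\alpha_j$. A short computation, expanding $u_iu_j$ in the basis $\{1,u_i,u_j,z\}$ of $T$ and using associativity of $\mathfrak{s}$ to kill every coefficient except that of $z$, then comparing $(u_iu_j)^2=\alpha_i\alpha_j=z^2$ with $\car F=2$, shows $u_iu_j=z\in\alt(A,\sigma)$; in particular $T=F[u_i,u_j]$.

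\emph{Wrap-up and main obstacle.} Finally, I check that $u_1,\ldots,u_n$ (with $u_k=w_k$ for $k\notin\{i,j\}$) is a set of alternating generators of $S$. Generation, alternation, and invertibility are immediate; the subtle point is that every partial product $u_{k_1}\cdots u_{k_l}$ lies in $\alt(A,\sigma)$, equivalently is $\mathfrak{s}$-orthogonal to $1$. Splitting the index set as $K_1\sqcup K_2$ with $K_1\subseteq\{i,j\}$ and $K_2$ disjoint from $\{i,j\}$, this reduces by associativity of $\mathfrak{s}$ to showing that each summand of $u_{K_1}\in T$ (a combination of $\{1,w_i,w_j,w_iw_j\}$) is $\mathfrak{s}$-orthogonal to $w_{K_2}$, which follows from the diagonality of $\mathfrak{s}$ in the original basis $\{w_I\}_I$ of $S$. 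The step I anticipate as the main obstacle is the conceptual reduction to the simply $P$-equivalent case via chain equivalence --- in particular, securing an appropriate version of the chain $P$-equivalence theorem in characteristic $2$ for bilinear Pfister forms, after which the rest is a matter of careful bookkeeping with $\mathfrak{s}$.
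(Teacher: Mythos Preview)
Your proof is correct and follows essentially the paper's strategy: obtain alternating generators, invoke chain $P$-equivalence for bilinear Pfister forms (the paper cites \cite[(A.1)]{arason} rather than \cite{elman}) to reduce to the simply $P$-equivalent case, and resolve the two-slot modification via uniqueness of the pure subform. The only difference is that the paper carries out the two-slot step inside the biquaternion subalgebra $(Q_i\otimes Q_j,\sigma_i\otimes\sigma_j)$ by re-applying its already-proved $n=2$ case there, whereas you do it intrinsically in $T=F[w_i,w_j]\subseteq S$ using the associativity of $\mathfrak{s}$; this is a minor packaging difference rather than a change of method.
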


\begin{proof}
First suppose that $n=1$ and let $\{v_1\}$ is a set of alternating generators of $S$ with $v_1^2=\beta\in F^\times$.
Since $A$ is a quaternion algebra, we have
$\disc\sigma=\nrd_{A}(v_1)=\beta F^{\times2}\in F^\times/F^{\times2}$ and the result follows from \cite[(7.4)]{knus}.

Now suppose that $n=2$.
Set $S_0=S\cap\alt(A,\sigma)=F^\perp$, $\mathfrak{s}_0=\mathfrak{s}|_{S_0\times S_0}$ and $\mathfrak{b}=\lla\alpha_1,\alpha_2\rra$.
As $\mathfrak{s}\simeq\mathfrak{b}$, by \cite[p. 16]{arason} the pure subforms of these forms are also isometric.
So $\mathfrak{s}_0\simeq\langle\alpha_1,\alpha_2,\alpha_1\alpha_2\rangle$.
Let $V$ be the underlying vector space of $\mathfrak{b}_0:=\langle\alpha_1,\alpha_2,\alpha_1\alpha_2\rangle$ with a respective orthogonal basis $\{v_1,v_2,v_3\}$.
Let $f:(V,\mathfrak{b}_0)\simeq(S_0,\mathfrak{s}_0)$ be an isometry and set $u_i=f(v_i)\in S_0$, $i=1,2$.
We have $u_i^2=f(v_i)^2=v_i^2=\alpha_i$, $i=1,2$.
We also have $\mathfrak{s}(1,u_1u_2)=\mathfrak{s}(u_1,u_2)=\mathfrak{b}(v_1,v_2)=0$, so $u_1u_2\in F^\perp=S_0\subseteq\alt(A,\sigma)$.
Thus
$\{u_1,u_2\}$ is the desired set of alternating generators of $S$.

Finally suppose that $n\geq3$.
Let $(A,\sigma)\simeq\bigotimes_{i=1}^n(Q_i,\sigma_i)$ be a decomposition of $(A,\sigma)$ and let $v_i\in\alt(Q_i,\sigma_i)$ with $v_i^2=\beta_i\in F^\times$, $i=1,\cdots,n$.
Then $\{v_1,\cdots,v_n\}$ is set of alternating generators of $S$ and by (\ref{ax}) we have ${\mathfrak s}\simeq\lla\beta_1,\cdots,\beta_n\rra$, so \[\lla\alpha_1,\cdots,\alpha_n\rra\simeq\lla\beta_1,\cdots,\beta_n\rra.\]
By \cite[(A. 1)]{arason} there exist
bilinear $n$-fold Pfister forms ${\mathfrak b}_0,\cdots,{\mathfrak b}_m$ such that ${\mathfrak b}_0=\lla\beta_1,\cdots,\beta_n\rra$, ${\mathfrak b}_m=\lla\alpha_1,\cdots,\alpha_n\rra$ and for every $i=0,\cdots,m-1$,
${\mathfrak b}_i$ is simply P-equivalent to ${\mathfrak b}_{i+1}$.
In order to prove the result, using induction on $m$, it is enough to consider the case where $\lla\alpha_1,\cdots,\alpha_n\rra$ and $\lla\beta_1,\cdots,\beta_n\rra$ are simply P-equivalent.
By re-indexing if necessary we may assume that $\lla\alpha_1,\alpha_2\rra\simeq\lla\beta_1,\beta_2\rra$ and $\alpha_i=\beta_i$ for $i\geq3$.
Set $(B,\tau)=(Q_1,\sigma_1)\otimes(Q_2,\sigma_2)$ and $S'=F[v_1,v_2]$.
Then $S'\subseteq \sym(B,\tau)$ is a $\rho$-generated totally singular conic algebra and $C_B(S')=S'$.
As proved in the case $n=2$, there exists a set of alternating generators $\{u_1,u_2\}$ of $S'$
 such that $u_i^2=\alpha_i$, $i=1,2$.
We have $S=S'\otimes F[v_3,\cdots,v_n]\simeq F[u_1,u_2]\otimes F[v_3,\cdots,v_n]$.
So $\{u_1,u_2,v_3,\cdots,v_n\}$ is the desired set of alternating generators of $S$.
\end{proof}

\begin{prop}\label{tr}
Let $(A,\sigma)$ be a totally decomposable algebra of degree $2^n$ with involution of orthogonal type over a field $F$ of characteristic $2$ and let $S\subseteq \sym(A,\sigma)$ be a $\rho$-generated totally singular conic algebra such that $C_A(S)=S$.
Then the following statements are equivalent:
$(i)$ $(A,\sigma)\simeq(M_{2^n}(F),t)$.
$(ii)$ $\mathfrak {Pf}(A,\sigma)\simeq\lla1,\cdots,1\rra$.
$(iii)$ $u^2\in F^2$ for every $u\in S$.
$(iv)$ Every maximal subfield of $S$ containing $F$ reduces to $F$.
\end{prop}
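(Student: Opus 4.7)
The plan is to prove the equivalences via the cycle $(i)\Rightarrow(ii)\Rightarrow(iii)\Rightarrow(i)$ together with $(iii)\Leftrightarrow(iv)$. Most of the work is already packaged in the earlier results, so the proof amounts to assembling the right ingredients at each step.

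For $(i)\Rightarrow(ii)$ I would decompose the split algebra as $(M_{2^n}(F),t)\simeq\bigotimes_{i=1}^n(M_2(F),t)$ and apply (\ref{inv}$(ii)$) to each factor: since the transpose involution on $M_2(F)$ is orthogonal with trivial discriminant, each $\disc(t)$ is trivial and hence $\mathfrak{Pf}(A,\sigma)\simeq\lla 1,\ldots,1\rra$. For $(ii)\Rightarrow(iii)$ I would use (\ref{lp}) to transfer the Pfister invariant to the bilinear form $\mathfrak{s}$ on $S$, and then apply (\ref{gen}) with $\alpha_i=1$ to produce a set of alternating generators $\{u_1,\ldots,u_n\}$ of $S$ with $u_i^2=1\in F^2$. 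Since $S$ is commutative and totally singular conic, squaring is a ring homomorphism on $S$; because $F^2$ is closed under addition in characteristic two (as $a^2+b^2=(a+b)^2$) and under multiplication, it follows that $x^2\in F^2$ for every $x\in S=F[u_1,\ldots,u_n]$.

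The main implication $(iii)\Rightarrow(i)$ invokes (\ref{par}) to produce a decomposition $(A,\sigma)\simeq\bigotimes_{i=1}^n(Q_i,\sigma|_{Q_i})$ together with invertible elements $v_i\in S\cap\alt(Q_i,\sigma|_{Q_i})$. Since $v_i\in S$, hypothesis $(iii)$ gives $v_i^2\in F^2$, so $\disc\sigma|_{Q_i}=v_i^2F^{\times 2}$ is trivial, and (\ref{inv}$(ii)$) yields $(Q_i,\sigma|_{Q_i})\simeq(M_2(F),t)$ for each $i$; tensoring recovers $(i)$.

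For $(iii)\Leftrightarrow(iv)$ I would use (\ref{sub}$(i)$): if $(iii)$ holds and $K\supseteq F$ is a maximal subfield of $S$, any $x\in K$ satisfies $x^2=a^2$ for some $a\in F$, so $(x+a)^2=0$ in the field $K$ forces $x=a\in F$; conversely if $(iv)$ holds then by (\ref{sub}$(i)$), $x^2\in K^2=F^2$ for every $x\in S$. I do not expect a substantial technical obstacle: the heavy lifting is done by (\ref{par}) and (\ref{gen}), and the proposition is essentially a dictionary between properties of the Pfister invariant, properties of $S$, and the split case. The only care needed is to pick the cycle so that each of these results is invoked exactly once in the direction where the condition $v_i^2\in F^2$ (or its consequence) is directly usable.
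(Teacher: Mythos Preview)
Your proposal is correct and follows essentially the same route as the paper. The paper organizes the equivalences slightly differently --- proving $(ii)\Rightarrow(i)$ directly by observing that $D_F(\lla 1,\ldots,1\rra)=F^2$ forces each $\disc\sigma_i$ to be trivial, rather than going through $(iii)$ via (\ref{par}) --- but the substance is identical: in both cases one decomposes $(A,\sigma)$ into quaternion factors, reads off that the discriminants are squares, and applies (\ref{inv}\,$(ii)$). One small point you leave implicit (as does the paper): to invoke (\ref{inv}\,$(ii)$) you need each $\sigma|_{Q_i}$ to be orthogonal, which holds because a symplectic factor would put $1\in\alt(Q_i,\sigma_i)\subseteq\alt(A,\sigma)$, contradicting the orthogonality of $\sigma$.
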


\begin{proof}
The implication $(i)\Rightarrow (ii)$ follows from the fact that the transpose involution
 in characteristic $2$ has trivial discriminant, see \cite[p. 82]{knus}.

$(ii)\Rightarrow (i):$
Let $(A,\sigma)\simeq\bigotimes_{i=1}^n(Q_i,\sigma_i)$ be a decomposition of $(A,\sigma)$ to quaternion algebras with involution over $F$.
Since $\car F=2$ a sum of squares in $F$ is again a square, so $D_F(\mathfrak {Pf}(A,\sigma))=D_F(\lla1,\cdots,1\rra)=F^2$.
It follows that for every $i$, $\disc\sigma_i$ is trivial.
So by (\ref{inv}) we have $(Q_i,\sigma_i)\simeq(M_2(F),t)$ which implies that $(A,\sigma)\simeq(M_{2^n}(F),t)$.
The equivalence $(ii)\Leftrightarrow (iii)$ follows from (\ref{ax}), (\ref{lp}) and (\ref{gen}) and $(iii)\Leftrightarrow (iv)$ follows from (\ref{local}).
\end{proof}

The proof of the following result is left to the reader.
\begin{lem}\label{sk}
Let $(A,\sigma)$ be a totally decomposable algebra with involution of orthogonal type over a field $F$ of characteristic $2$ and let $S\subseteq \sym(A,\sigma)$ be a $\rho$-generated totally singular conic $F$-algebra such that $C_A(S)=S$.
If $K/F$ is a field extension, then $S_K\subseteq \sym(A_K,\sigma_K)$ is a $\rho$-generated totally singular conic $K$-algebra and $C_{A_K}(S_K)=S_K$.
\end{lem}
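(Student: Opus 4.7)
The plan is to verify the three required properties of $S_K$ one at a time, relying on the infrastructure already developed for totally singular conic algebras and the fundamental property \ref{223}.

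First I would check that $S_K \subseteq \sym(A_K,\sigma_K)$, which is immediate since $\sigma_K$ is the $K$-linear extension of $\sigma$ and hence fixes every simple tensor $s \otimes k$ with $s \in S$. Next, to see that $S_K$ is a totally singular conic $K$-algebra, I would note that $S$ is commutative by Remark \ref{local}, so $S_K = S \otimes_F K$ is commutative as well. For an arbitrary $x = \sum s_i \otimes k_i \in S_K$, commutativity together with $\car F = 2$ give $x^2 = \sum s_i^2 \otimes k_i^2$, and since each $s_i^2 \in F$, this lies in $F \otimes_F K = K$.

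The key step is showing that $S_K$ is $\rho$-generated over $K$. Here I would avoid a direct manipulation of generators and instead invoke Lemma \ref{cliff}: setting $n = r_F(S)$, that lemma produces a totally singular quadratic form $(V,q)$ of dimension $n$ over $F$ with $S \simeq C(V)$. Since the Clifford algebra construction commutes with scalar extension, $S_K \simeq C(V)_K \simeq C(V_K, q_K)$, and $(V_K, q_K)$ is still totally singular of dimension $n$ over $K$. Applying the other direction of Lemma \ref{cliff} then yields that $S_K$ is a $\rho$-generated totally singular conic $K$-algebra (and in particular $r_K(S_K) = n$).

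Finally, to establish $C_{A_K}(S_K) = S_K$, I would combine the preceding step with Theorem \ref{223}$(i)$: by Theorem \ref{der}, $\dim_F S = \deg_F A = 2^n$, so $\dim_K S_K = 2^n = \deg_K A_K$, and by Proposition \ref{max} together with Lemma \ref{frob}, $S_K$ is a commutative Frobenius $K$-subalgebra of $A_K$ of dimension $\deg_K A_K$. The hypotheses of Theorem \ref{223} are thus satisfied over $K$, giving $C_{A_K}(S_K) = S_K$. There is no serious obstacle once Lemma \ref{cliff} is in hand; the only point requiring a moment's care is the identification $C(V) \otimes_F K \simeq C(V_K)$ in the totally singular setting, which is a routine application of the universal property of the Clifford algebra.
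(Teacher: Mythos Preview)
Your proof is correct. The paper itself leaves this lemma to the reader, so there is no authorial argument to compare against; your write-up supplies exactly the kind of verification that was intended.

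One small remark: your use of Lemma~\ref{cliff} for the $\rho$-generated step is perfectly valid, but a slightly more direct route is available via Proposition~\ref{max} alone. Writing $S=F[u_1,\ldots,u_n]$ with $n=r_F(S)$ and $\dim_FS=2^n$, one has $S_K=K[u_1,\ldots,u_n]$, so $r_K(S_K)\leq n$; since $\dim_KS_K=2^n$ and a totally singular conic $K$-algebra on $r$ generators has dimension at most $2^r$, this forces $r_K(S_K)=n$ and hence $\dim_KS_K=2^{r_K(S_K)}$, which is precisely the criterion of Proposition~\ref{max}. This avoids invoking the base-change compatibility of Clifford algebras, though as you note that compatibility is routine.
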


\begin{lem}\label{spl}
Let $(A,\sigma)$ be a totally decomposable algebra of degree $2^n$ with involution of orthogonal type over a field $F$ of characteristic $2$ and let $S\subseteq \sym(A,\sigma)$ be a $\rho$-generated totally singular conic algebra such that $C_A(S)=S$.
If $K\supseteq F$ is a maximal subfield of $S$, then $(A_K,\sigma_K)\simeq(M_{2^n}(K),t)$.
In particular $K$ is a splitting field of $A$.
\end{lem}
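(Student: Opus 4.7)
The plan is to reduce the claim to condition $(iii)$ of Proposition \ref{tr} applied over $K$, and then verify it by a direct computation of squares in $S_K$, using the crucial fact from Lemma \ref{sub}$(i)$ that every square from $S$ already lies in $K^2$.

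First I would check that the setup is legitimate over $K$: the algebra $(A_K,\sigma_K)$ is still totally decomposable of degree $2^n$ (total decomposability is stable under scalar extension, as tensoring each quaternion factor with $K$ yields a quaternion $K$-algebra with involution), and $\sigma_K$ is still of orthogonal type, since $1\in\alt(A_K,\sigma_K)$ would force $1\in\alt(A,\sigma)$ by fixing an $F$-basis of $A$ adapted to $\alt(A,\sigma)$ and comparing coefficients. Combined with Lemma \ref{sk}, which guarantees that $S_K\subseteq\sym(A_K,\sigma_K)$ is a $\rho$-generated totally singular conic $K$-algebra with $C_{A_K}(S_K)=S_K$, all hypotheses of Proposition \ref{tr} hold for $(A_K,\sigma_K)$ and $S_K$.

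It therefore suffices to establish condition $(iii)$ of Proposition \ref{tr} over $K$: $y^2\in K^2$ for every $y\in S_K$. Write an arbitrary element as $y=\sum_{i}s_i\otimes k_i$ with $s_i\in S$ and $k_i\in K$. Since $S$ is commutative and $\car F=2$, all mixed terms vanish in the expansion of $y^2$, giving
\[
y^2 = \sum_i s_i^2\otimes k_i^2.
\]
Now invoke Lemma \ref{sub}$(i)$: because $K$ is a maximal subfield of $S$, every square of an element of $S$ lies in $K^2$, so for each $i$ there exists $\mu_i\in K$ with $s_i^2=\mu_i^2$. Pulling the scalars $s_i^2=\mu_i^2\in F$ across the tensor product and using $\car F=2$ once more,
\[
y^2 = \sum_i 1\otimes \mu_i^2 k_i^2 = 1\otimes\Bigl(\sum_i \mu_i k_i\Bigr)^{\!2} \in K^2,
\]
since $\sum_i\mu_i k_i\in K$.

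Applying Proposition \ref{tr} to $(A_K,\sigma_K)$ and $S_K$ we conclude $(A_K,\sigma_K)\simeq(M_{2^n}(K),t)$, and in particular $A_K$ is split, so $K$ is a splitting field of $A$. The main (and essentially only) subtlety is identifying the right conceptual ingredient, namely Lemma \ref{sub}$(i)$, which turns the a priori weaker statement ``$s_i^2\in F$'' into ``$s_i^2\in K^2$'' and thereby converts the formal expansion of $y^2$ into an actual square inside $K$; the verification that $(A_K,\sigma_K)$ and $S_K$ satisfy the hypotheses of Proposition \ref{tr} is routine given the results already established in the paper.
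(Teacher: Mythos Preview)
Your proof is correct and follows essentially the same route as the paper: invoke Lemma~\ref{sk} to transfer the hypotheses to $(A_K,\sigma_K)$ and $S_K$, use maximality of $K$ in $S$ to get $s^2\in K^2$ for every $s\in S$, deduce $y^2\in K^2$ for every $y\in S_K$, and conclude via Proposition~\ref{tr}. The paper's argument is slightly terser (it cites~(\ref{local}) rather than~(\ref{sub})$(i)$ and compresses the square computation into one line), but the content is identical.
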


\begin{proof}
By (\ref{sk}), $S_K\subseteq \sym(A_K,\sigma_K)$ is a $\rho$-generated totally singular conic algebra and $C_{A_K}(S_K)=S_K$.
As $K$ is a maximal subfield of $S$, by (\ref{local}) we have $u^2\in K^2$ for every $u\in S$.
This, together with $K^2\subseteq S^2\subseteq F$ implies that $x^2\in K^2$ for every $x\in S_K$.
So by (\ref{tr}) we have $(A_K,\sigma_K)\simeq(M_{2^n}(K),t)$.
\end{proof}

Let $(A,\sigma)$ be a totally decomposable algebra with involution of orthogonal type over a field $F$ of characteristic $2$.
By \cite[(7.2)]{dolphin3}, (\ref{lp}) and (\ref{gen}) all $\rho$-generated totally singular conic subalgebras $S$ of $(A,\sigma)$ with $S\subseteq \sym(A,\sigma)$ and $C_A(S)=S$ are isomorphic as $F$-algebras.
Here, we give a proof of this fact which is independent from \cite{dolphin3}:
\begin{lem}\label{field}
Let $(A,\sigma)$ be a totally decomposable algebra of degree $2^n$ with involution of orthogonal type over a field $F$ of characteristic $2$ and let $S$ and $S'$ be two $\rho$-generated totally singular conic subalgebras of $(A,\sigma)$ such that $S,S'\subseteq \sym(A,\sigma)$, $C_A(S)=S$ and $C_A(S')=S'$.
\begin{itemize}
\item[$(i)$]
If $K\supseteq F$ and $K'\supseteq F$ are respectively maximal subfields of $S$ and $S'$, then $K\simeq K'$ as $F$-algebras.
\item[$(ii)$]
We have $S\simeq S'$ as $F$-algebras.
\end{itemize}
\end{lem}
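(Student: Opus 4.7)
The plan is to prove (i) by showing that the set of squares $K^2 \subseteq F$ is an invariant of $(A,\sigma)$ that can be read off from $\mathfrak{Pf}(A,\sigma)$, and then to deduce (ii) immediately from (i), (\ref{der}) and (\ref{new37}).

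My first step will be to identify $K^2$ with the set of values represented by $\mathfrak{s}$. By (\ref{sub} $(i)$), every $x \in S$ satisfies $x^2 \in K^2$, so $\{v^2 : v \in S\} = K^2$, and combined with (\ref{star}) this gives $D_F(\mathfrak{s}) = K^2 \setminus \{0\}$. Carrying out the same computation for $S'$ yields $D_F(\mathfrak{s}') = K'^2 \setminus \{0\}$. By (\ref{lp}), both $\mathfrak{s}$ and $\mathfrak{s}'$ are isometric to $\mathfrak{Pf}(A,\sigma)$, hence to each other, so they represent the same set of values; therefore $K^2 = K'^2$ as subsets of $F$.

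To promote this equality to an $F$-algebra isomorphism $K \simeq K'$, I will use the Frobenius. In characteristic $2$ the squaring map $\phi : K \to K$, $x \mapsto x^2$, is an injective ring homomorphism with image $K^2$, so $\phi : K \xrightarrow{\sim} K^2$ is a ring isomorphism; likewise $\psi : K' \xrightarrow{\sim} K'^2 = K^2$. The composite $\psi^{-1} \circ \phi : K \to K'$ is then a ring isomorphism, and for $f \in F \subseteq K$ it sends $f$ to the unique element of $K'$ whose square equals $f^2$; in characteristic $2$ this element is $f$ itself, so $\psi^{-1}\phi$ is an $F$-algebra isomorphism, proving (i).

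For (ii), (\ref{der}) forces $\dim_F S = \dim_F S' = 2^n$, and (i) gives $K \simeq K'$; (\ref{new37}) then yields an $F$-algebra isomorphism $S \simeq S'$. No step appears genuinely hard; the only real observation is that $D_F(\mathfrak{s})$ coincides with $K^2 \setminus \{0\}$, which converts the question from one about maximal subfields of $S$ into one about the value set of $\mathfrak{Pf}(A,\sigma)$.
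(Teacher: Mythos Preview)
Your argument is correct, and the deduction of (ii) from (i) via (\ref{new37}) matches the paper exactly. However, your proof of (i) takes a genuinely different route and, crucially, is \emph{not} independent of \cite{dolphin3} --- which is precisely the point of the paper's proof (see the sentence immediately preceding the lemma).

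Your key step is the appeal to (\ref{lp}): you use that both $\mathfrak{s}$ and $\mathfrak{s}'$ are isometric to $\mathfrak{Pf}(A,\sigma)$, and hence to each other. But the very definition of $\mathfrak{Pf}(A,\sigma)$ as an invariant independent of the chosen decomposition of $(A,\sigma)$ is imported from \cite{dolphin3}; applying (\ref{lp}) to $S$ and to $S'$ produces Pfister forms coming from two \emph{a priori} different total decompositions of $(A,\sigma)$, and it is only Dolphin's result that lets you identify them. The paper instead argues directly: it extends scalars to $K$, uses (\ref{spl}) to obtain $(A_K,\sigma_K)\simeq(M_{2^n}(K),t)$, applies (\ref{tr}) to $S'_K$ to deduce $u^2\in K^2$ for every $u\in S'$, and then builds an explicit $F$-algebra embedding $K'\hookrightarrow K$ via (\ref{well}); symmetry gives equality of dimensions and hence $K\simeq K'$. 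Your identification $D_F(\mathfrak{s})=K^2\setminus\{0\}$ is a pleasant observation not stated explicitly in the paper, and together with the Frobenius trick it gives a clean passage from $\mathfrak{s}\simeq\mathfrak{s}'$ to $K\simeq K'$; but overall you trade the independence from \cite{dolphin3} that motivates the lemma for a shorter argument.
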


\begin{proof}
By (\ref{spl}) we have $(A_K,\sigma_K)\simeq(M_{2^n}(K),t)$.
Also according to (\ref{sk}), $S'_K$ is a $\rho$-generated totally singular conic subalgebra of $(A_K,\sigma_K)$ with $S'_K\subseteq \sym(A_K,\sigma_K)$ and $C_{A_K}(S'_K)=S'_K$.
So we have $u^2\in K^2$ for every $u\in S'_K$ by (\ref{tr}).
In particular if $K'=F[v'_1,\cdots,v'_r]$, where $r=r_F(K')$ and $v'_1,\cdots,v'_r\in K'\subseteq S'\subseteq S'_K$,
then ${v'_i}^2\in K^2$, $i=1,\cdots,r$.
So for every $i$ there exists $v_i\in K$ such that $v_i^2={v'_i}^2\in K^2$.
By (\ref{well}) the linear map $f:K'\rightarrow K$ induced by $f(v'_i)=v_i$ is an $F$-algebra homomorphism.
As $K'$ is a field, $f$ is a monomorphism.
Similarly there exists an $F$-algebra monomorphism $K\hookrightarrow K'$.
So $\dim_FK=\dim_FK'$ and $K\simeq K'$ as $F$-algebras.
This proves $(i)$.
The statement $(ii)$ follows from $(i)$ and (\ref{new37}).
\end{proof}

\begin{notation}
Let $(A,\sigma)$ be a totally decomposable algebra with involution of orthogonal type over a field $F$ of characteristic $2$.
In view of (\ref{field} $(ii)$) there exists, up to isomorphism, a unique $\rho$-generated totally singular conic algebra $S\subseteq \sym(A,\sigma)$ such that $C_A(S)=S$.
We denote this algebra by $\Phi(A,\sigma)$.
Note that if $\mathfrak{s}$ is the bilinear form on $\Phi(A,\sigma)$ defined in $(\ref{bil})$, then $\mathfrak{s}\simeq\mathfrak{Pf}(A,\sigma)$ by (\ref{lp}).
\end{notation}

\begin{rem}\label{phiq}
Let $(Q,\sigma)$ be a quaternion algebra with involution over a field $F$ of characteristic $2$.
Then $\Phi(Q,\sigma)=F+\alt(Q,\sigma)$.
In fact by (\ref{der}) we have $\Phi(Q,\sigma)\subseteq F+\alt(Q,\sigma)$ and $\dim\Phi(Q,\sigma)=2$.
As $\alt(Q,\sigma)$ is one dimensional, we obtain $\Phi(Q,\sigma)=F+\alt(Q,\sigma)$.
\end{rem}

\begin{cor}\label{tensor}
Let $(A,\sigma)$ and $(B,\tau)$ be two totally decomposable algebras with involution of orthogonal type over a field $F$
of characteristic $2$.
Then we have $\Phi(A\otimes B,\sigma\otimes\tau)\simeq\Phi(A,\sigma)\otimes\Phi(B,\tau)$.
\end{cor}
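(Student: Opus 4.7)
The plan is to exploit the uniqueness statement established in (\ref{field} $(ii)$), which characterizes $\Phi(A,\sigma)$ up to $F$-algebra isomorphism as any $\rho$-generated totally singular conic subalgebra of $\sym(A,\sigma)$ that is self-centralizing in $A$. Thus it suffices to verify that the tensor product $S\otimes_F T$, with $S:=\Phi(A,\sigma)$ and $T:=\Phi(B,\tau)$, sits inside $A\otimes_F B$ as such a subalgebra for the involution $\sigma\otimes\tau$.

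I would proceed in three short steps. First, since $S\subseteq\sym(A,\sigma)$ and $T\subseteq\sym(B,\tau)$, every elementary tensor $s\otimes t$ satisfies $(\sigma\otimes\tau)(s\otimes t)=\sigma(s)\otimes\tau(t)=s\otimes t$, so $S\otimes_F T\subseteq\sym(A\otimes B,\sigma\otimes\tau)$. Second, both $S$ and $T$ are $\rho$-generated totally singular conic $F$-algebras by the definition of $\Phi$, so by (\ref{tensorr}) the tensor product $S\otimes_F T$ is again $\rho$-generated totally singular conic over $F$.

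The one nontrivial point is that $S\otimes_F T$ is its own centralizer in $A\otimes_F B$. Here I would invoke the standard centralizer formula for subalgebras of tensor products of central simple algebras (a direct consequence of the double centralizer theorem, see for instance \cite[Ch.~12]{pierce}), which gives
\[
C_{A\otimes_F B}(S\otimes_F T)=C_A(S)\otimes_F C_B(T).
\]
Since $C_A(S)=S$ and $C_B(T)=T$ by the definition of $\Phi$, this equals $S\otimes_F T$, as required. (As a sanity check on dimensions: if $\deg_F A=2^n$ and $\deg_F B=2^m$, then $\dim_F(S\otimes T)=2^n\cdot 2^m=2^{n+m}=\deg_F(A\otimes B)$, matching the last clause of (\ref{der}).)

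With these three properties verified, $S\otimes_F T$ satisfies the hypotheses of (\ref{der} $(ii)$) for $(A\otimes B,\sigma\otimes\tau)$, and the uniqueness assertion of (\ref{field} $(ii)$) yields the $F$-algebra isomorphism $\Phi(A\otimes B,\sigma\otimes\tau)\simeq S\otimes_F T=\Phi(A,\sigma)\otimes_F\Phi(B,\tau)$. I expect no real obstacle: the entire argument is a packaging of the centralizer formula for tensor products together with the uniqueness theorem already proved, and the computational content is absorbed into (\ref{tensorr}) and (\ref{field}).
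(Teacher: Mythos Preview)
Your proposal is correct and follows essentially the same route as the paper: set $S=\Phi(A,\sigma)\otimes\Phi(B,\tau)$, verify $S\subseteq\sym(A\otimes B,\sigma\otimes\tau)$, that $S$ is $\rho$-generated totally singular conic via (\ref{tensorr}), and that $C_{A\otimes B}(S)=S$, then invoke the uniqueness in (\ref{field}~$(ii)$). The paper's proof is terser---it simply asserts the centralizer and symmetry properties without the justifications you supply---but the structure is identical.
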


\begin{proof}
$(i)$ Set $S=\Phi(A,\sigma)\otimes\Phi(B,\tau)$.
Then $S\subseteq\sym(A\otimes B,\sigma\otimes\tau)$ is a totally singular conic $F$-algebra and $C_{A\otimes B}(S)=S$.
Also by (\ref{tensorr}), $S$ is a $\rho$-generated algebra.
So by (\ref{field} $(ii)$), we have $\Phi(A\otimes B,\sigma\otimes\tau)\simeq S$.
The part $(ii)$ follows from (\ref{sk}) and (\ref{field} $(ii)$).
\end{proof}

\section{Some characterizing properties of the Pfister invariant}
\begin{lem}\label{q0} {\rm(Compare \cite[(2.4)]{dherte})}
Let $K/F$ be a finite extension of fields of characteristic $2$ and
let $(Q,\sigma)$ be a quaternion algebra with involution of orthogonal type over $K$.
If $u^2\in F$ for every $u\in\Phi(Q,\sigma)$, then there exists a quaternion $F$-subalgebra $Q_0\subseteq Q$ such that $\sigma(Q_0)=Q_0$, i.e., $(Q,\sigma)\simeq_K(Q_0,\sigma|_{Q_0})\otimes(K,\id)$.
\end{lem}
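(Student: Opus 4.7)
The plan is to use Remark~\ref{phiq} to write $\Phi(Q,\sigma)=K+\alt(Q,\sigma)$, pick a nonzero $v\in\alt(Q,\sigma)$ (so that $v^{2}=\nrd_{Q}(v)\in K^{\times}$ by Remark~\ref{inv}$(i)$), and extract from the hypothesis the two relations
\[
v^{2}\in F^{\times}\qquad\text{and}\qquad K^{2}\subseteq F,
\]
the first by applying the hypothesis to $v\in\Phi(Q,\sigma)$ and the second by applying it to every $\alpha\in K\subseteq\Phi(Q,\sigma)$. Thus $K/F$ is purely inseparable of exponent at most one. The idea is then to descend the standard characteristic-$2$ quaternion presentation of $(Q,\sigma)$ over $K$, with $v$ as one generator and a second symmetric element $u$ as the other, to an $F$-presentation by modifying $u$ by an element of $K$.

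For the second generator, since $\sigma$ is orthogonal one has $\dim_{K}\sym(Q,\sigma)=3>2=\dim_{K}\Phi(Q,\sigma)$, so I would pick $u\in\sym(Q,\sigma)\setminus\Phi(Q,\sigma)$. The reduced trace is a $K$-linear map invariant under every involution, so it vanishes on $K$ (in characteristic $2$) and on $\alt(Q,\sigma)$; hence $\Phi(Q,\sigma)\subseteq\ker\trd$, so $\trd(u)\neq 0$, and after rescaling by $\trd(u)^{-1}\in K^{\times}$ I can assume $\trd(u)=1$. Then the reduced characteristic polynomial yields $u^{2}+u=\nrd(u)=:a\in K$. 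Moreover $uv-vu=uv-\sigma(uv)\in\alt(Q,\sigma)=Kv$, so $uv+vu=cv$ for a unique $c\in K$; comparing this with the direct computation $\trd(uv)=uv+v(1+u)=(c+1)v$ and using $\trd(uv)\in K$ while $v\notin K$ forces $c=1$, i.e., $uv+vu=v$.

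Finally, set $u_{0}:=u+a$. Because $a\in K$ is central and $\sigma$-fixed, one has $\sigma(u_{0})=u_{0}$ and $u_{0}v+vu_{0}=uv+vu=v$, while $K^{2}\subseteq F$ gives
\[
u_{0}^{2}+u_{0}=(u^{2}+u)+a^{2}+a=a^{2}\in F.
\]
I then set $Q_{0}:=F[u_{0},v]\subseteq Q$. The set $\{1,u_{0},v,u_{0}v\}$ is obtained from the $K$-basis $\{1,u,v,uv\}$ of $Q$ by a unipotent $K$-linear change of basis, hence is $F$-linearly independent; the displayed $F$-quaternion relations show that its $F$-span is closed under multiplication, so $Q_{0}$ is a $4$-dimensional quaternion $F$-subalgebra of $Q$, $\sigma$-stable since both generators are $\sigma$-fixed. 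The natural $K$-algebra homomorphism $Q_{0}\otimes_{F}K\to Q$, $q\otimes k\mapsto qk$, is then a surjection between $K$-algebras of dimension $4$, hence an isomorphism, and it visibly intertwines the involutions, giving the desired decomposition. The main subtlety I anticipate is the trace calculation in the second paragraph, which must simultaneously arrange $u\in\sym(Q,\sigma)$, $\trd(u)=1$, and the commutation relation $uv+vu=v$; this hinges on the coincidence $\Phi(Q,\sigma)=\ker\trd|_{\sym(Q,\sigma)}$ together with the one-dimensionality of $\alt(Q,\sigma)$.
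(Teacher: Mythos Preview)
Your argument is correct. The trace identification $\ker(\trd|_{\sym(Q,\sigma)})=K+\alt(Q,\sigma)=\Phi(Q,\sigma)$ holds because $\trd$ is not identically zero on $\sym(Q,\sigma)$ (otherwise $\sym(Q,\sigma)=\sym(Q,\gamma)$ for the canonical involution $\gamma$, forcing $\sigma=\gamma$ to be symplectic); the commutation computation $\trd(uv)=uv+\gamma(v)\gamma(u)=uv+v(1+u)=(c+1)v$ then forces $c=1$ exactly as you say, and the substitution $u_0=u+\nrd(u)$ with $K^2\subseteq F$ gives $u_0^2+u_0=\nrd(u)^2\in F$.

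The paper's proof is genuinely different. Rather than picking a symmetric element of reduced trace $1$ and correcting it by its norm, the paper reuses the derivation machinery of Lemma~\ref{pa}: it takes the $K$-derivation $\delta$ of $S=K[u]$ with $\delta(u)=u$, extends it to an inner derivation $\delta_\xi$ of $Q$ via Theorem~\ref{223}, and sets $\eta=\xi^2$, obtaining $\eta^2+\eta=(\xi^2+\xi)^2\in F$ and $\eta u+u\eta=u$ directly. It then splits into cases according to whether the scalar $\alpha\in K$ in $\eta+\sigma(\eta)=\alpha u$ lies in $F$ or not, taking $Q_0=F[\eta,u]$ or $Q_0=F[\eta,\alpha u]$ respectively. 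Your approach is more elementary and self-contained---it needs nothing beyond basic quaternion arithmetic and the observation $K^2\subseteq F$---whereas the paper's approach maintains methodological uniformity with Section~4 at the cost of invoking the Frobenius/derivation-extension result. Your explicit normalization via $\trd$ also makes the $\sigma$-stability of $Q_0$ immediate (both generators are symmetric), avoiding the case distinction the paper needs.
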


\begin{proof}
The idea of the proof is similar to the proof of (\ref{pa}).
Set $S=\Phi(Q,\sigma)$.
By (\ref{phiq}) we have $S=K+\alt(Q,\sigma)=K[u]$, where $0\neq u\in\alt(Q,\sigma)$.
Let $\delta$ be the $K$-derivation of $S$ induced by $\delta(u)=u$.
By (\ref{223} $(ii)$), $\delta$ extends to an inner derivation $\delta_\xi$ of $Q$ for some $\xi\in Q$.
Let $\eta=\xi^2\in Q$.
As $\xi^2+\xi\in C_Q(S)=S$, we obtain $(\xi^2+\xi)^2\in F$ and $(\xi+\xi^2)u=u(\xi+\xi^2)$, thus
\[\eta^2+\eta=\xi^4+\xi^2=(\xi^2+\xi)^2\in F,\quad \eta u+u\eta=\xi u+u\xi=\delta(u)=u.\]
So the $F$-algebra generated by $u$ and $\eta$ is a quaternion algebra.
As $\alt(Q,\sigma)=Ku$, we have $\eta+\sigma(\eta)=\alpha u$ for some $\alpha\in K$.
If $\alpha\in F$, then the quaternion $F$-algebra $Q_0$ generated by $u$ and $\eta$ is invariant under $\sigma$.
Otherwise $\alpha\neq0$ and the quaternion $F$-algebra $Q_0$ generated by $\alpha u$ and $\eta$ is invariant under $\sigma$.
\end{proof}

\begin{cor}\label{q}
Let $K/F$ be a finite extension of fields of characteristic $2$.
Let $(A,\sigma)$ be a totally decomposable algebra with involution of ortho\-gonal type over $K$.
If $u^2\in F$ for every $u\in\Phi(A,\sigma)$, then there exists a central simple $F$-algebra $B\subseteq A$ such that $(A,\sigma)\simeq_K(B,\sigma|_B)\otimes(K,\id)$.
\end{cor}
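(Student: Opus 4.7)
The plan is to proceed by induction on $n$, where $\deg_K A = 2^n$. The base case $n=1$ is precisely Lemma \ref{q0}. For the inductive step, I will invoke Corollary \ref{par} applied to $S = \Phi(A,\sigma) \subseteq \sym(A,\sigma)$ to obtain a decomposition $(A,\sigma) \simeq_K (Q_1,\sigma|_{Q_1}) \otimes_K (B_1, \sigma|_{B_1})$, where $Q_1 \subseteq A$ is a $\sigma$-invariant quaternion $K$-subalgebra containing an invertible element $v_1 \in \Phi(A,\sigma) \cap \alt(Q_1, \sigma|_{Q_1})$, $B_1 = C_A(Q_1)$ has degree $2^{n-1}$, and both restricted involutions are of orthogonal type (this last point because the tensor of an orthogonal and a symplectic involution is symplectic, while $\sigma$ is orthogonal).

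The next step is to show that the hypothesis on squares transfers to each factor. By Corollary \ref{tensor}, $\Phi(A,\sigma) \simeq \Phi(Q_1,\sigma|_{Q_1}) \otimes_K \Phi(B_1,\sigma|_{B_1})$; under this identification, $\Phi(Q_1,\sigma|_{Q_1})$ and $\Phi(B_1,\sigma|_{B_1})$ embed as $K$-subalgebras of $\Phi(A,\sigma) \subseteq A$, and squaring is obviously preserved by these embeddings. Hence for every $u \in \Phi(Q_1,\sigma|_{Q_1})$ one has $u^2 \in F$ by hypothesis, and similarly for $\Phi(B_1,\sigma|_{B_1})$.

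I can then apply Lemma \ref{q0} to $(Q_1,\sigma|_{Q_1})$ to extract a $\sigma$-invariant quaternion $F$-subalgebra $Q_0 \subseteq Q_1$ with $(Q_1,\sigma|_{Q_1}) \simeq_K (Q_0,\sigma|_{Q_0}) \otimes_F (K,\id)$, and apply the induction hypothesis to $(B_1,\sigma|_{B_1})$ to obtain a $\sigma$-invariant central simple $F$-subalgebra $B' \subseteq B_1$ with $(B_1,\sigma|_{B_1}) \simeq_K (B',\sigma|_{B'}) \otimes_F (K,\id)$. Setting $B = Q_0 \otimes_F B' \subseteq A$ and combining the two factor-level isomorphisms via associativity of the tensor product yields $(A,\sigma) \simeq_K (B,\sigma|_B) \otimes_F (K,\id)$, as required.

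The principal technical point is the inheritance of the $F$-square hypothesis through the decomposition of $\Phi$; this is where Corollary \ref{tensor} does the essential work, since without a clean compatibility between $\Phi$ of a tensor product and the tensor of the $\Phi$'s the induction could not proceed. Beyond that, the argument is a routine peeling off of quaternion factors, parallel in structure to the proof of Corollary \ref{par} itself, and the key non-trivial input at each step is precisely Lemma \ref{q0}.
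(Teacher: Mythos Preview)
Your argument is correct and follows the same approach as the paper: both use Corollary~\ref{tensor} to transfer the hypothesis $u^2\in F$ from $\Phi(A,\sigma)$ down to each quaternion factor, and then apply Lemma~\ref{q0}. The only difference is structural---the paper takes a full decomposition $(A,\sigma)\simeq_K\bigotimes_{i=1}^n(Q_i,\sigma_i)$ in one step and applies Lemma~\ref{q0} to every factor simultaneously, whereas you peel off one quaternion at a time by induction; the content is identical. One minor point: your parenthetical justification that $\sigma|_{Q_1}$ and $\sigma|_{B_1}$ are orthogonal only excludes the mixed orthogonal/symplectic case, not the possibility that both are symplectic; but the latter is ruled out because $v_1\in\alt(Q_1,\sigma|_{Q_1})$ would then lie in $K=Z(A)$, forcing $\dim_K S\le 2^{n-1}$.
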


\begin{proof}
Let $(A,\sigma)\simeq_K\bigotimes_{i=1}^n(Q_i,\sigma_i)$ be a decomposition of $(A,\sigma)$ into quaternion $K$-algebras with involution and choose an invertible element $u_i\in\alt(Q_i,\sigma_i)$, $i=1,\cdots,n$.
Then we have $\Phi(A,\sigma)\simeq K[u_1,\cdots,u_n]$, so $u_i^2\in F^\times$ for $i=1,\cdots,n$.
It follows from (\ref{tensor}) that $u^2\in F$ for every $u\in\Phi(Q_i,\sigma_i)$.
So by (\ref{q0}), $(A,\sigma)\simeq_K\bigotimes_{i=1}^{n}(Q'_i,\sigma|_{Q'_i})\otimes(K,\id)$,
where $Q'_i$ is a quaternion $F$-subalgebra of $Q_i$.
\end{proof}

\begin{lem}\label{k}
Let $(A,\sigma)$ be a totally decomposable algebra of degree $2^n$ with involution of orthogonal type over a field $F$ of characteristic $2$.
Consider an element $u\in \Phi(A,\sigma)$ with $u^2\in F^\times\setminus F^{\times2}$.
Set $B=C_A(u)$ and $K=F[u]$.
\begin{itemize}
\item[$(i)$] The pair $(B,\sigma|_B)$ is a totally decomposable algebra with involution of ortho\-gonal type over $K$ and $\Phi(B,\sigma|_B)\simeq\Phi(A,\sigma)$ as $K$-algebras.
\item[$(ii)$] There exists a quaternion algebra $Q\subseteq A$ containing $u$ such that $\sigma(Q)=Q$.
\end{itemize}
\end{lem}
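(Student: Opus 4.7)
The plan is to prove (i) by realizing $\Phi(A,\sigma)$ itself as the Pfister subalgebra of $(B,\sigma|_B)$ over $K$, and then to deduce (ii) by descending $B$ to an $F$-form via Corollary \ref{q} and invoking the double centralizer theorem.

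For (i), since $u\in\Phi(A,\sigma)\subseteq\sym(A,\sigma)$, the involution fixes $K=F[u]$ pointwise, so $\sigma$ restricts to a $K$-linear involution on $B=C_A(K)=C_A(u)$. The double centralizer theorem gives that $B$ is central simple over $K$ of degree $2^{n-1}$, and the containment $\alt(B,\sigma|_B)\subseteq\alt(A,\sigma)$ together with $1\notin\alt(A,\sigma)$ forces $\sigma|_B$ to be of orthogonal type. The crucial step is to check that $S:=\Phi(A,\sigma)$---which lies in $B$ because $S$ is commutative and contains $u$---satisfies the hypotheses of Theorem \ref{der} over $K$. The containment $S\subseteq\sym(B,\sigma|_B)$ and the equality $C_B(S)=C_A(S)\cap B=S$ are immediate, and $x^2\in F\subseteq K$ for every $x\in S$ makes $S$ a totally singular conic $K$-algebra. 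The dimensional requirement follows from Corollary \ref{rf}: $r_K(S)=r_F(S)-r_F(K)=n-1$ while $\dim_K S=2^n/2=2^{n-1}$, so Proposition \ref{max} yields $\rho$-generation over $K$. Theorem \ref{der} then gives the total decomposability of $(B,\sigma|_B)$ over $K$, and the uniqueness statement in Lemma \ref{field} identifies $\Phi(B,\sigma|_B)\simeq S=\Phi(A,\sigma)$ as $K$-algebras.

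For (ii), I would apply Corollary \ref{q} to $(B,\sigma|_B)$ with respect to the extension $K/F$. The hypothesis holds because, by (i), $\Phi(B,\sigma|_B)\simeq\Phi(A,\sigma)$ as $K$-algebras, and $\Phi(A,\sigma)$ is a totally singular conic $F$-algebra, so every one of its elements has square in $F$. The corollary produces a central simple $F$-subalgebra $B_0\subseteq B$ with $\sigma(B_0)=B_0$ and $B\simeq_K B_0\otimes_F K$; in particular $B_0$ is a central simple $F$-subalgebra of $A$ of degree $2^{n-1}$. The double centralizer theorem applied in $A$ then gives $A\simeq B_0\otimes_F Q$ where $Q:=C_A(B_0)$ is necessarily a quaternion $F$-algebra (dimension $2^{2n}/2^{2n-2}=4$). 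Since $B_0\subseteq B=C_A(K)$, the subfield $K$ commutes with $B_0$, so $u\in K\subseteq Q$. Finally $\sigma(Q)=\sigma(C_A(B_0))=C_A(\sigma(B_0))=C_A(B_0)=Q$, completing the argument.

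The main conceptual obstacle is that Lemma \ref{pa} produces a $\sigma$-invariant quaternion subalgebra but exercises no control over which specific element of $\Phi(A,\sigma)$ it captures, whereas the present statement demands that the quaternion piece contain a prescribed $u$. Corollary \ref{q} is precisely what bridges this gap: once the $K$-algebra $B$ is known to descend to an $F$-form $B_0$ (a fact rooted in the $F$-rationality of squares in $\Phi(A,\sigma)$), the quaternion centralizer of $B_0$ in $A$ is forced to absorb $K$, and hence to contain $u$.
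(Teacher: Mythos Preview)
Your proof is correct and follows essentially the same route as the paper's: for (i) you verify that $S=\Phi(A,\sigma)$, viewed over $K$, satisfies the hypotheses of Theorem~\ref{der} via Corollary~\ref{rf} and Proposition~\ref{max}, then invoke Lemma~\ref{field} for uniqueness; for (ii) you descend $B$ to an $F$-form $B_0$ via Corollary~\ref{q} and take $Q=C_A(B_0)$. The only cosmetic differences are that you verify $C_B(S)=S$ directly via $C_A(S)\cap B=S\cap B=S$ rather than through Theorem~\ref{223}(i), and you spell out explicitly why $u\in Q$ and why $\sigma(Q)=Q$, details the paper leaves implicit.
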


\begin{proof}
$(i)$ Since $\sigma(u)=u$, $\sigma|_B$ is of the first kind.
We also have $1\notin\alt(A,\sigma)$ which implies that $1\notin\alt(B,\sigma|_B)$, so $\sigma|_B$ is of orthogonal type.
Set $S=\Phi(A,\sigma)$.
By (\ref{rf}) we have $r_K(S)=r_F(S)-1=n-1$.
So $\dim_KS=2^{r_K(S)}$ and using (\ref{max}), $S$ is a Frobenius $\rho$-generated $K$-algebra.
As $\dim_KS=\deg_KB$, by (\ref{223} $(i)$) we have $C_B(S)=S$.
By (\ref{der}) and (\ref{field} $(ii)$) $(B,\sigma|_B)$ is totally decomposable and $\Phi(B,\sigma|_B)\simeq S$.

$(ii)$ As $u^2\in F$ for every $u\in\Phi(B,\sigma|_B)\simeq S$, by (\ref{q}) there exists a central simple $F$-algebra $B_0\subseteq B$ such that
$(B,\sigma|_B)\simeq_K(B_0,\sigma|_{B_0})\otimes(K,\id)$.
Then $Q=C_A(B_0)\subseteq A$ is a quaternion algebra containing $u$ which is invariant under~$\sigma$.
\end{proof}

\begin{lem}\label{k2}
Let $(A,\sigma)$ be a totally decomposable algebra with involution of orthogonal type over a field $F$ of characteristic $2$.
Let $\{u_1,\cdots,u_n\}$ be a set of alternating generators of $\Phi(A,\sigma)$ and set $\alpha_i=u_i^2\in F^\times$, $i=1,\cdots,n$.
Suppose that $\alpha_n\notin F^2$ and set $B=C_A(u_n)$ and $K=F[u_n]$.
Then $(B,\sigma|_B)$ is a totally decomposable algebra with involution of orthogonal type over $K$ and
$\{u_1,\cdots,u_{n-1}\}$ is a set of alternating generators of $\Phi(B,\sigma|_B)$.
In particular $\mathfrak{Pf}(B,\sigma|_B)\simeq\lla\alpha_1,\cdots,\alpha_{n-1}\rra_K$.
\end{lem}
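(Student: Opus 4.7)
The plan is to first recognize that $S:=\Phi(A,\sigma)=F[u_1,\ldots,u_n]$ itself serves as $\Phi(B,\sigma|_B)$ when viewed as a $K$-algebra, and then to verify that $\{u_1,\ldots,u_{n-1}\}$ is a set of alternating generators for the $K$-involution $\sigma|_B$. Since $S$ is commutative and contains $u_n$, we have $S\subseteq C_A(u_n)=B$. By (\ref{rf}) we have $r_K(S)=n-1$ and $\dim_K S=2^{n-1}$, so (\ref{max}) makes $S$ a $\rho$-generated totally singular conic $K$-algebra; together with $S\subseteq\sym(B,\sigma|_B)$ and $C_B(S)=S$ (via (\ref{223}), as $\dim_K S=\deg_K B=2^{n-1}$), the uniqueness part of (\ref{field} $(ii)$) applied over $K$ forces $\Phi(B,\sigma|_B)=S$, while (\ref{k} $(i)$) supplies the total decomposability of $(B,\sigma|_B)$. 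Over $K$ one has $S=K[u_1,\ldots,u_{n-1}]$, and each $u_i$ is invertible in $B$ because $u_i^{-1}$ commutes with $u_n$. What remains is to show that each $u_i$ and each product $u_{i_1}\cdots u_{i_l}$ with $1\le i_1<\cdots<i_l\le n-1$ lies in $\alt(B,\sigma|_B)$.

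To that end I invoke (\ref{k} $(ii)$) to produce a $\sigma$-invariant quaternion $F$-subalgebra $Q\subseteq A$ containing $u_n$, so that $(A,\sigma)\simeq(Q,\sigma|_Q)\otimes_F(C,\sigma|_C)$ with $C=C_A(Q)$. Then $B=C_A(u_n)=K\otimes_F C$ and $\sigma|_B=\id_K\otimes\sigma|_C$, and a direct computation gives $\alt(B,\sigma|_B)=\alt(C,\sigma|_C)+u_n\alt(C,\sigma|_C)$; equivalently, an element $b=c_0+u_n c_1$ of $B$ (with $c_0,c_1\in C$) is alternating in $(B,\sigma|_B)$ if and only if $c_0,c_1\in\alt(C,\sigma|_C)$. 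To extract $c_0$ from such a $b$, I pick an $F$-linear functional $\phi:Q\to F$ with $\phi(1)=1$ and $\phi\circ\sigma|_Q=\phi$ (equivalently, $\phi$ vanishing on $\alt(Q,\sigma|_Q)$), which exists because $1\notin\alt(Q,\sigma|_Q)$ for an orthogonal involution. The map $\Phi:=\phi\otimes\id_C:A\to C$ then intertwines $\sigma$ and $\sigma|_C$, so $\Phi(\alt(A,\sigma))\subseteq\alt(C,\sigma|_C)$, and on $B$ it acts by $c_0+u_n c_1\mapsto c_0$.

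The crux, and the main obstacle, is extracting $c_1$. Using the same projection idea with the roles of $Q$ and $C$ swapped, one verifies $u_n\in\alt(Q,\sigma|_Q)$; consequently every $\sigma|_Q$-invariant functional vanishes on $u_n$ and the naive ``$u_n$-coefficient'' projection is not available. The workaround is to multiply by $u_n$: for any product $b=u_{i_1}\cdots u_{i_l}$ of distinct alternating generators with indices $\le n-1$, the element $u_n b=u_n u_{i_1}\cdots u_{i_l}$ is again a product of distinct alternating generators of $S$ (now including $u_n$), hence lies in $\alt(A,\sigma)$ by the very definition of an alternating generator set. Writing $b=c_0+u_n c_1$ gives $u_n b=\alpha_n c_1+u_n c_0$, and applying $\Phi$ to $u_n b$ yields $\alpha_n c_1=\Phi(u_n b)\in\alt(C,\sigma|_C)$, so $c_1\in\alt(C,\sigma|_C)$ after dividing by $\alpha_n\in F^\times$. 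Combined with $c_0=\Phi(b)\in\alt(C,\sigma|_C)$, this forces $b\in\alt(B,\sigma|_B)$ and completes the verification that $\{u_1,\ldots,u_{n-1}\}$ is a set of alternating generators of $\Phi(B,\sigma|_B)$. The Pfister-form identity $\mathfrak{Pf}(B,\sigma|_B)\simeq\lla\alpha_1,\ldots,\alpha_{n-1}\rra_K$ is then immediate from (\ref{ax}) applied over $K$.
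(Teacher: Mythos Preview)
Your argument is correct, but the route differs from the paper's and is heavier. You invoke (\ref{k}\,$(ii)$) to split off a $\sigma$-invariant quaternion $F$-subalgebra $Q\ni u_n$, write $A=Q\otimes_F C$ and $B=K\otimes_F C$, describe $\alt(B,\sigma|_B)=\alt(C,\sigma|_C)\oplus u_n\,\alt(C,\sigma|_C)$ explicitly, and then use a $\sigma$-equivariant projection $\Phi=\phi\otimes\id_C$ (together with the multiply-by-$u_n$ trick) to show that both components of each $u_{i_1}\cdots u_{i_l}$ land in $\alt(C,\sigma|_C)$. The paper bypasses the quaternion splitting entirely: from (\ref{der}\,$(iii)$) one has $\Phi(B,\sigma|_B)\subseteq\alt(B,\sigma|_B)+K$, so $u_{i_1}\cdots u_{i_l}+\alpha+\beta u_n\in\alt(B,\sigma|_B)\subseteq\alt(A,\sigma)$ for some $\alpha,\beta\in F$; since $u_{i_1}\cdots u_{i_l}$ and $u_n$ are already in $\alt(A,\sigma)$ and $F\cap\alt(A,\sigma)=0$, one gets $\alpha=0$, and then multiplying by the central $u_n$ and repeating the argument gives $\beta=0$. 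Both proofs hinge on the same ``multiply by $u_n$'' idea, but the paper's version needs only the elementary inclusion $\alt(B,\sigma|_B)\subseteq\alt(A,\sigma)$, whereas yours imports the nontrivial descent result (\ref{k}\,$(ii)$) (hence (\ref{q}) and (\ref{q0})). One small point to tighten: your construction of $\phi$ uses $1\notin\alt(Q,\sigma|_Q)$, and the ``swapped'' projection needs $1\notin\alt(C,\sigma|_C)$; both hold because in characteristic~$2$ a tensor factor of an orthogonal involution is orthogonal, but this should be said explicitly.
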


\begin{proof}
By (\ref{k} $(i)$), $(B,\sigma|_B)$ is a totally decomposable algebra with involution of orthogonal type over $K$ and we have an isomorphism of $K$-algebras $\Phi(B,\sigma|_B)\simeq\Phi(A,\sigma)$.
Let $1\leq l\leq n-1$ and $1\leq i_1<\cdots<i_l\leq n-1$.
We claim that $u_{i_1}\cdots u_{i_l}\in\alt(B,\sigma|_B)$.
By (\ref{der} $(iii)$) we have $\Phi(B,\sigma|_B)\subseteq \alt(B,\sigma|_B)+K$, so there exists $\lambda\in K$ such that $w:=u_{i_1}\cdots u_{i_l}+\lambda\in\alt(B,\sigma|_B)$.
Write $\lambda=\alpha+\beta u_n$ for some $\alpha,\beta\in F$.
Then
\[w=u_{i_1}\cdots u_{i_l}+\alpha+\beta u_n\in\alt(B,\sigma|_B)\subseteq\alt(A,\sigma).\]
As $\sigma$ is of orthogonal type, we have $\alpha=0$.
As $u_n\in K=Z(B)$ we have $u_n w\in\alt(B,\sigma|_B)$.
On the other hand $u_n w=u_nu_{i_1}\cdots u_{i_l}+\alpha_n\beta\in\alt(B,\sigma|_B)\subseteq\alt(A,\sigma)$.
Again, as $\sigma$ is of orthogonal type, we have $\beta=0$.
So $u_{i_1}\cdots u_{i_l}\in\alt(B,\sigma|_B)$ and the claim is proved, i.e.,
$\{u_1,\cdots,u_{n-1}\}$ is a set of alternating generators of $\Phi(B,\sigma|_B)$.
\end{proof}

In \cite{dolphin3} it was shown that if $(A,\sigma)\simeq(A',\sigma')$ then $A\simeq A'$ and $\mathfrak {Pf}(A,\sigma)\simeq\mathfrak {Pf}(A',\sigma')$.
It was also asked whether the converse is also true (see \cite[(7.4)]{dolphin3}).
The following result shows that this question has an affirmative answer, i.e., totally decomposable algebras with involution of orthogonal type can be classified, up to conjugation, by their Pfister invariant.

\begin{thm}\label{phi}
Let $(A,\sigma)$ and $(A',\sigma')$ be two totally decomposable algebras with involution of orthogonal type over a field $F$ of characteristic $2$.
If $A\simeq A'$ and $\mathfrak {Pf}(A,\sigma)\simeq\mathfrak {Pf}(A',\sigma')$, then $(A,\sigma)\simeq(A',\sigma')$.
\end{thm}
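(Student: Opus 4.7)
The plan is to argue by induction on $n$, where $\deg_F A = 2^n$. For the base case $n = 1$, both $A$ and $A'$ are isomorphic quaternion $F$-algebras; the Pfister invariant of an orthogonal involution on such is the $1$-fold form $\lla \disc\sigma\rra$, and two orthogonal involutions on a quaternion algebra with the same discriminant are conjugate by \cite[(7.4)]{knus}. For the inductive step, first observe that if $\mathfrak{Pf}(A, \sigma) \simeq \lla 1, \ldots, 1\rra$, then the same holds for $\mathfrak{Pf}(A', \sigma')$, and Proposition \ref{tr} applied to both sides yields $(A, \sigma) \simeq (M_{2^n}(F), t) \simeq (A', \sigma')$.

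Assume henceforth that the common Pfister invariant is non-trivial. By the chain $P$-equivalence theorem \cite[(A.1)]{arason}, one can pick $\alpha_1, \ldots, \alpha_n \in F^\times$ with $\alpha_n \notin F^{\times 2}$ such that $\mathfrak{Pf}(A, \sigma) \simeq \lla \alpha_1, \ldots, \alpha_n\rra \simeq \mathfrak{Pf}(A', \sigma')$. Proposition \ref{gen} then furnishes alternating generators $\{u_1, \ldots, u_n\}$ of $\Phi(A, \sigma)$ and $\{u'_1, \ldots, u'_n\}$ of $\Phi(A', \sigma')$ with $u_i^2 = (u'_i)^2 = \alpha_i$. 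Fix an $F$-algebra isomorphism $A \simeq A'$ and transport $\sigma'$ to an involution of $A$; since $F[u_n]$ and $F[u'_n]$ are $F$-isomorphic subfields of $A$, Skolem--Noether lets me further assume, after composition with an inner automorphism, that $u_n = u'_n =: u$. Set $K = F[u]$ and $B = C_A(u)$. Lemma \ref{k2} then tells us that both $(B, \sigma|_B)$ and $(B, \sigma'|_B)$ are totally decomposable $K$-algebras of degree $2^{n-1}$ with orthogonal involution and common Pfister invariant $\lla \alpha_1, \ldots, \alpha_{n-1}\rra_K$. Invoking the induction hypothesis over $K$ provides $h \in B^\times$ with $\sigma'|_B = \inn(h) \circ \sigma|_B \circ \inn(h^{-1})$, and replacing $\sigma$ by its conjugate by $h$ (which preserves the isomorphism class of $(A, \sigma)$) reduces to the case $\sigma|_B = \sigma'|_B$.

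The principal difficulty, and the main obstacle, is the final lift: passing from $\sigma|_B = \sigma'|_B$ to a conjugacy of $\sigma$ and $\sigma'$ on all of $A$. The automorphism $\tau := \sigma \circ \sigma'$ of $A$ is trivial on $B$, so by Skolem--Noether $\tau = \inn(g)$ with $g \in C_A(B) = K$, and $\sigma|_K = \id$ forces $\sigma(g) = g$. One must then produce $w \in A^\times$ satisfying $w\sigma(w) = \lambda g$ for some $\lambda \in F^\times$, which would exhibit $\sigma'$ as conjugate to $\sigma$. Exploiting the decomposition $(A, \sigma) \simeq (Q, \sigma|_Q) \otimes (A_0, \sigma|_{A_0})$ provided by Lemma \ref{k}(ii) for a $\sigma$-invariant quaternion $Q \ni u$, such a $w$ can be sought inside $Q$: the orthogonal involution $\sigma|_Q$ has discriminant $\alpha_n = u^2$, and the explicit description of the quaternion structure, combined with the relation $\sigma(g) = g$, allows one to solve $w\sigma(w) = \lambda g$ by inspection in $Q^\times$. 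Carrying out this explicit verification inside the quaternion factor is the crux of the argument; once established, the induction closes and the theorem follows.
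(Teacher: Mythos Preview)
Your overall strategy---identify $A$ with $A'$, arrange $u_n=u'_n=:u$ by Skolem--Noether, descend to $B=C_A(u)$ by Lemma~\ref{k2}, apply induction over $K=F[u]$, and then lift---is different from the paper's proof, which instead uses the descent result Corollary~\ref{q} to split off an $F$-subalgebra $B_0\subseteq B$ and compare tensor decompositions $(Q,\sigma|_Q)\otimes(B_0,\sigma|_{B_0})$ on each side. Your route is more direct and avoids the descent machinery, but the final lifting step as written has a genuine gap.

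You assert that $w\sigma(w)=\lambda g$ can be solved in $Q^\times$ ``by inspection'' using only $\sigma(g)=g$. This is not true for arbitrary $g\in K^\times$. Write $g=a+bu$ with $a,b\in F$; inside the quaternion algebra $Q$ one computes $\disc(\inn(g)\circ\sigma|_Q)=\nrd_Q(g)\cdot\disc(\sigma|_Q)=(a^2+b^2\alpha_n)\alpha_n F^{\times2}$, and since $\alpha_n\notin F^{\times2}$ this equals $\alpha_n F^{\times2}$ only when $b=0$. So for $b\neq0$ no $w\in Q^\times$ (and in fact no $w\in A^\times$, if one works only with the data you have named) will do.

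The fix is to exploit a constraint you have already set up but do not use: after all your reductions, $u$ lies in $\alt(A,\sigma)\cap\alt(A,\sigma')$ (it is an alternating generator for both involutions, and conjugation by $h\in B=C_A(u)$ preserves this). Since $\sigma'=\inn(g)\circ\sigma$ with $\sigma(g)=g$, one has $\alt(A,\sigma')=g\cdot\alt(A,\sigma)$, so $g^{-1}u\in\alt(A,\sigma)$. But $g^{-1}u=(au+b\alpha_n)/(a^2+b^2\alpha_n)$, and as $au\in\alt(A,\sigma)$ while $F\cap\alt(A,\sigma)=0$ (orthogonality of $\sigma$), this forces $b=0$. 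Hence $g\in F^\times$ and $\sigma'=\sigma$ outright; no equation needs to be solved at all. With this observation in place, your argument goes through and gives a clean alternative to the paper's descent-based proof.
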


\begin{proof}
Let $\{u_1,\cdots,u_n\}$ be a set of alternating generators of $\Phi(A,\sigma)$ with $u_i^2=\alpha_i\in F^\times$, so that
$\mathfrak{Pf}(A,\sigma)\simeq\lla\alpha_1,\cdots,\alpha_n\rra$.
By (\ref{lp}) and (\ref{gen}) there exists a set of alternating generators $\{u'_1,\cdots,u'_n\}$ of $\Phi(A',\sigma')$ such that ${u'_i}^2=\alpha_i$, $i=1,\cdots,n$.

We use induction on $n$.
If $n=1$, we have $\disc\sigma=\disc\sigma'=\nrd_A(u_1)F^{\times2}=\alpha_1 F^{\times2}$, so the result follows from \cite[(7.4)]{knus}.
So suppose that $n>1$.
If $\alpha_i\in F^{\times2}$ for every $i=1,\cdots,n$, then using (\ref{tr}) we obtain $(A,\sigma)\simeq(A',\sigma')\simeq(M_{2^n}(F),t)$ and we are done.
So (by re-indexing if necessary) we may assume that $\alpha_n\in F^\times\setminus F^{\times2}$.
Set $B=C_A(u_n)$, $K=F[u_n]$, $B'=C_{A'}(u'_n)$ and $K'=F[u'_n]$.
As $K\simeq K'=F(\sqrt\alpha_n)$, we may consider $B'$ as a central simple algebra over $K$.
By (\ref{k} $(i)$) and (\ref{k2}), $(B,\sigma|_B)$ and $(B',\sigma'|_{B'})$ are totally decomposable algebras with involution of orthogonal type over $K$ and $\mathfrak{Pf}(B,\sigma|_B)\simeq\mathfrak{Pf}(B',\sigma'|_{B'})\simeq\lla\alpha_1,\cdots,\alpha_{n-1}\rra_K$.
Since $A\simeq_F A'$, we obtain $B\simeq_K B'$, so by induction hypothesis there exists an isomorphism of $K$-algebras with involution
\begin{align*}
f:(B,\sigma|_B)\simeq_K(B',\sigma'|_{B'}).
\end{align*}
By (\ref{k} $(i)$) we have a $K$-algebra isomorphism $\Phi(B,\sigma|_B)\simeq\Phi(A,\sigma)$.
So we get $u^2\in F$ for every $u\in\Phi(B,\sigma|_B)$.
By (\ref{q}) there exists a central simple $F$-subalgebra $B_0\subseteq B$ such that
$(B,\sigma|_B)\simeq_K(B_0,\sigma|_{B_0})\otimes(K,\id)$.
Set $B_0'=f(B_0)\subseteq B'$, hence $(B_0,\sigma|_{B_0})\simeq_F(B_0',\sigma'|_{B_0'})$.
Then $B_0'$ is a $\sigma'$-invariant central simple subalgebra of $B'$ and $(B',\sigma'|_{B'})\simeq_K(B_0',\sigma'|_{B_0'})\otimes(K,\id)$.
Set $Q:=C_A(B_0)\supseteq C_A(B)=C_A(C_A(u_n))$, hence $u_n\in Q$.
Similarly set $Q':=C_{A'}(B'_0)=C_{A'}(f(B_0))\supseteq C_{A'}(f(B))=C_{A'}(B')=C_{A'}(C_{A'}(u'_n))$, hence $u'_n\in Q'$.
As $\deg_FQ=\deg_FQ'=2$, $Q$ and $Q'$ are quaternion algebras over $F$.
We also have $(A,\sigma)\simeq_F(Q,\sigma|_{Q})\otimes(B_0,\sigma|_{B_0})$ and $(A',\sigma')\simeq_F(Q',\sigma'|_{Q'})\otimes(B_0',\sigma'|_{B_0'})$.
Since $B_0\simeq_FB_0'$ and $A\simeq_FA'$, we obtain
\begin{align*}
\textstyle Q'\simeq_F C_{A'}(B_0')\simeq_F C_{A}(B_0)\simeq_F Q.
\end{align*}
Also as $u_n\in\alt(A,\sigma)\cap Q$, by \cite[(3.5)]{mahmoudi2} we have $u_n\in\alt(Q,\sigma|_Q)$.
It follows that $\disc\sigma|_{Q}=\nrd_{Q}(u_n)F^{\times2}=\alpha_n F^{\times2}$ and similarly $\disc\sigma'|_{Q'}=\alpha_n F^{\times2}$, hence $(Q,\sigma|_Q)\simeq_F(Q',\sigma'|_{Q'})$ by \cite[(7.4)]{knus}.
Using this isomorphism we obtain
\begin{align*}
(A,\sigma)\simeq_F(Q,\sigma|_{Q})\otimes(B_0,\sigma|_{B_0})\simeq_F(Q',\sigma'|_{Q'})\otimes(B_0',\sigma'|_{B_0'})\simeq(A',\sigma').\hspace{1.7cm}\qedhere
\end{align*}
\end{proof}
A bilinear space $(V,\mathfrak{b})$ over a field $F$ is called {\it metabolic} if there exists a subspace $W$ of $V$ such that $\dim W=\frac{1}{2}\dim V$ and $\mathfrak{b}|_{W\times W}=0$.
An $F$-algebra with involution $(A,\sigma)$ is called {\it metabolic} if there exists an idempotent $e\in A$ such that $\sigma(e)e = 0$ and $\dim_FeA = \frac{1}{2}\dim_FA$.
A bilinear form is metabolic if and only if its adjoint involution is metabolic, see \cite[(4.8)]{dolphin}.

As an application we complement a characterization of totally decomposable algebras with metabolic involution given in \cite{dolphin3}.

\begin{thm}{\rm(\cite[(7.5)]{dolphin3})}\label{pfi}
Let $(A,\sigma)$ be a totally decomposable algebra of degree $2^n$ with involution of orthogonal type over a field $F$ of characteristic $2$.
Then the following statements are equivalent:
\begin{itemize}
\item[$(i)$] $(A,\sigma)$ is metabolic.
\item[$(ii)$] $\mathfrak{Pf}(A,\sigma)$ is metabolic.
\item[$(iii)$] $\Phi(A,\sigma)$ is not a field.
\item[$(iv)$] There exists a central simple algebra with involution of orthogonal type $(B,\tau)$ over $F$
such that $(A,\sigma)\simeq(M_2(F),t)\otimes(B,\tau)$.
\end{itemize}
\end{thm}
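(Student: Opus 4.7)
The main new content of this theorem beyond what is already established in \cite[(7.5)]{dolphin3} is the equivalence with condition (iii); the equivalences (i) $\Leftrightarrow$ (ii) $\Leftrightarrow$ (iv) can be cited directly from Dolphin's paper. My plan is therefore to establish (ii) $\Leftrightarrow$ (iii), and then to complete the picture with self-contained proofs of (iv) $\Rightarrow$ (i) and, as far as possible, (iii) $\Rightarrow$ (iv), both of which are natural targets for the $\Phi$-algebra machinery developed here.

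For (ii) $\Leftrightarrow$ (iii) I would first invoke (\ref{lp}) to identify $\mathfrak{Pf}(A,\sigma)$ with the associative bilinear form $\mathfrak{s}$ on $\Phi(A,\sigma)$ defined in (\ref{bil}), noting that by (\ref{star}) we have $\mathfrak{s}(v,v)=v^{2}$ for every $v\in\Phi(A,\sigma)$. A bilinear Pfister form is metabolic if and only if it is isotropic (standard in arbitrary characteristic), so $\mathfrak{Pf}(A,\sigma)$ is metabolic precisely when there exists $0\neq v\in\Phi(A,\sigma)$ with $v^{2}=0$. By (\ref{local}) the maximal ideal $\mathfrak{m}$ of the local algebra $\Phi(A,\sigma)$ equals $\{v:v^{2}=0\}$, so such a $v$ exists iff $\mathfrak{m}\neq 0$, iff $\Phi(A,\sigma)$ is not a field. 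This is the cleanest step and I expect no difficulty here.

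For (iv) $\Rightarrow$ (i), the matrix $e=\bigl(\begin{smallmatrix}1 & 0\\1 & 0\end{smallmatrix}\bigr)\in M_{2}(F)$ is idempotent in characteristic $2$, satisfies $e^{t}e=0$, and has $\dim_{F}eM_{2}(F)=2=\tfrac{1}{2}\dim_{F}M_{2}(F)$; thus $e\otimes 1_{B}$ is a metabolic idempotent in $(M_{2}(F),t)\otimes(B,\tau)$, giving (i). For an independent proof of (iii) $\Rightarrow$ (iv), I would use the equivalence (ii) $\Leftrightarrow$ (iii) to deduce that $\mathfrak{Pf}(A,\sigma)$ is metabolic, and then by chain $P$-equivalence (\cite[(A.1)]{arason}) write $\mathfrak{Pf}(A,\sigma)\simeq\lla 1,\alpha_{2},\ldots,\alpha_{n}\rra$ for some $\alpha_{i}\in F^{\times}$. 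Proposition (\ref{gen}) produces alternating generators $u_{1},\ldots,u_{n}$ of $\Phi(A,\sigma)$ with $u_{1}^{2}=1$ and $u_{i}^{2}=\alpha_{i}$ for $i\geq 2$; applying Lemma (\ref{pa}) with $u_{1}$ as the distinguished generator extracts a $\sigma$-invariant quaternion $Q\subseteq A$ with $F[u_{2},\ldots,u_{n}]\subseteq C_{A}(Q)$. In the ``first case'' of the proof of (\ref{pa}) (where $\delta(s)\in F$), $u_{1}$ itself lies in $\alt(Q,\sigma|_{Q})$, so $\disc(\sigma|_{Q})=u_{1}^{2}F^{\times 2}=F^{\times 2}$ is trivial and $(Q,\sigma|_{Q})\simeq(M_{2}(F),t)$ by \cite[(7.4)]{knus}; setting $B:=C_{A}(Q)$, $\tau:=\sigma|_{B}$ yields (iv).

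The principal obstacle is the ``second case'' of (\ref{pa}), where $u_{1}\notin Q$ and the invertible alternating element extracted from $Q$ is $\delta(s)+s^{2}$, whose square $\delta(s)^{2}+(s^{2})^{2}$ need not a priori lie in $F^{\times 2}$, so one cannot immediately conclude triviality of $\disc(\sigma|_{Q})$. To close this gap I would either argue that the assumption $u_{1}^{2}=1$ allows the auxiliary element $\xi$ in the proof of (\ref{pa}) to be chosen so that $\delta(s)\in F$, forcing case $1$; or, more robustly, bypass the case analysis entirely via the classification theorem (\ref{phi}) by constructing some $(A',\sigma')=(M_{2}(F),t)\otimes(B',\tau')$ with $A'\simeq A$ as $F$-algebras and $\mathfrak{Pf}(A',\sigma')\simeq\mathfrak{Pf}(A,\sigma)$, and then concluding $(A,\sigma)\simeq(A',\sigma')$ from (\ref{phi}). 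This last route has the virtue of exploiting the paper's own classification as a substitute for Dolphin's argument.
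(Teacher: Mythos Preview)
Your (ii)$\Leftrightarrow$(iii) via $\mathfrak{s}(v,v)=v^{2}$ and (\ref{local}) is exactly the paper's (ii)$\Rightarrow$(iii) together with the easy converse, and your (iv)$\Rightarrow$(i) is the paper's argument. Where you diverge is in citing (ii)$\Leftrightarrow$(iv) from Dolphin: the paper cites \emph{only} (i)$\Leftrightarrow$(ii) from \cite{dolphin3} and supplies its own proof of (iii)$\Rightarrow$(iv), so that implication is the actual new content here, not an optional extra.

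For (iii)$\Rightarrow$(iv) the paper's approach is opposite to yours. Rather than setting $u_{1}^{2}=1$ and trying to split off $(M_{2}(F),t)$ in one shot, the paper arranges the generators so that $u_{1}\in K\setminus F$ (with $K$ a maximal subfield of $\Phi(A,\sigma)$) and $u_{n}\in\mathfrak{m}$, using (\ref{gene}). Then (\ref{pa})(ii), with $u_{1}$ as the distinguished generator, produces a $\sigma$-invariant quaternion $Q$ with $F[u_{2},\ldots,u_{n}]\subseteq B:=C_{A}(Q)$; one checks $\Phi(B,\sigma|_{B})\simeq F[u_{2},\ldots,u_{n}]$, which still contains the nilpotent $u_{n}$ and is therefore again not a field. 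Now induct on $n$, with the base case and the degenerate case $K=F$ handled by (\ref{tr}). No identification of $Q$ and no control of $\disc(\sigma|_{Q})$ is needed, so the case-1/case-2 split of (\ref{pa}) never enters.

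Both your proposed workarounds have real gaps. For (a), there is no reason $u_{1}^{2}=1$ should force $\delta(s)\in F$ in the proof of (\ref{pa}); the element $s$ depends on the auxiliary choice of $\xi$ and is not visibly constrained by $u_{1}^{2}$. For (b), to invoke (\ref{phi}) you must first \emph{produce} a totally decomposable $(B',\tau')$ of degree $2^{n-1}$ with $M_{2}(F)\otimes B'\simeq A$ and $\mathfrak{Pf}(B',\tau')\simeq\lla\alpha_{2},\ldots,\alpha_{n}\rra$; but nothing in the hypotheses guarantees that the degree-$2^{n-1}$ algebra Brauer-equivalent to $A$ is itself a tensor product of quaternions, let alone that it carries an orthogonal involution with the prescribed Pfister invariant. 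So route (b), as sketched, does not close the gap either.
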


\begin{proof}
The equivalence of $(i)$ and $(ii)$ was shown in \cite[(7.5)]{dolphin3}.

$(ii)\Rightarrow (iii)$: If $\mathfrak{Pf}(A,\sigma)$ is metabolic, then as $\mathfrak{Pf}(A,\sigma)\simeq\mathfrak{s}$, by the relation (\ref{star}) given in (\ref{bil}) there exists a nonzero $x\in\Phi(A,\sigma)$ such that $x^2=0$
and we obtain $(iii)$.

$(iii)\Rightarrow (iv)$:
We use induction on $n$.
If $n=1$, the result follows from (\ref{tr}) and (\ref{phiq}).
So suppose that $n>1$.
Let $\mathfrak{m}$ be the unique maximal ideal of $\Phi(A,\sigma)$ and let $K\supseteq F$ be a maximal subfield of $\Phi(A,\sigma)$.
If $K=F$ the result again follows from (\ref{tr}); so suppose that $K\neq F$.
Write $K=F[u_1,\cdots,u_r]$, where $r=r_F(K)$ and $u_1,\cdots,u_r\in K$.
Since $\Phi(A,\sigma)$ is not a field we have $\mathfrak{m}\neq\{0\}$.
So using (\ref{gene}) one can find $u_{r+1},\cdots,u_n\in\mathfrak{m}$ such that
$\Phi(A,\sigma)=K[u_{r+1},\cdots,u_n]$.
It follows that $\Phi(A,\sigma)=F[u_1,\cdots,u_n]$ with $u_1\in K\setminus F$ and $u_n\in\mathfrak{m}$.
By (\ref{pa} $(ii)$) there exists a $\sigma$-invariant quaternion subalgebra $Q$ of $A$ such that $F[u_2,\cdots,u_n]\subseteq C_A(Q)$.
 Set $B=C_A(Q)$.
We have $\dim_F F[u_2,\cdots,u_n]=\deg_FB=2^{n-1}$, so by (\ref{max}), $F[u_2,\cdots,u_n]$ is a $\rho$-generated $F$-algebra.
Also it is easy to see that $C_B(F[u_2,\cdots,u_n])=F[u_2,\cdots,u_n]$.
As $F[u_2,\cdots,u_n]\subseteq \sym(B,\sigma|_B)$, by (\ref{der}), $(B,\sigma|_B)$ is a totally decomposable algebra with involution of orthogonal type over $F$.
By (\ref{field} $(ii)$) we have
$\Phi(B,\sigma|_B)\simeq F[u_2,\cdots,u_n]$.
 As $u_n\in F[u_2,\cdots,u_n]\cap\mathfrak{m}$, $\Phi(B,\sigma|_B)$ is not a field.
So the result follows from induction hypothesis.

$(iv)\Rightarrow (i)$: Let
\begin{align*}
e=\left(\begin{array}{cc}0 & 1 \\0 & 1\end{array}\right)\in M_2(F).
\end{align*}
Then $e$ is a metabolic idempotent for $(M_2(F),t)$.
So $(M_2(F),t)$ is metabolic which implies that $(A,\sigma)\simeq(M_2(F),t)\otimes(B,\tau)$ is also metabolic.
\end{proof}

\footnotesize
\noindent{\sc M. G. Mahmoudi, {\tt
    mmahmoudi@sharif.ir}, \ \
  A.-H. Nokhodkar, {\tt
    anokhodkar@yahoo.com}\\   Department of Mathematical Sciences, Sharif University of Technology, P. O. Box 11155-9415, Tehran, Iran.}

\end{document}